\title[Cohomology of $p$--local  groups over $p_+^{1+2}$]
{Stable splitting and cohomology of 
$p$--local finite groups over\\the extraspecial $p$--group
of order $p^3$ and exponent $p$}
\author{Nobuaki Yagita}
\address{Department of Mathematics\\
Faculty of Education\\
Ibaraki University\\\newline
Mito\\
Ibaraki\\
Japan}
\email{yagita@mx.ibaraki.ac.jp}
\urladdr{}
\newcommand{\odd}{\mathrm{odd}}
\newcommand{\even}{\mathrm{even}}
\newcommand{\ddet}{\operatorname{Det}}
\newcommand{\rad}{\operatorname{radical}}
\newcommand{\mmod}{\,\operatorname{mod}}
\numberwithin{equation}{section}
\newcommand{\ec}{\mathrm{ec}}
\newcommand{\bZ}{{\mathbb Z}}
\newcommand{\bF}{{\mathbb F}}
\newcommand{\bM}{{\mathbb M}}
\def\cnewtheorem#1[#2]#3{\newtheorem{#1}{#3}[section]
\expandafter\let\csname c@#1\endcsname\c@thm}
\newtheorem{thm}{Theorem}[section]
\theoremstyle{remark}
\newtheorem*{rem}{Remark}
\begin{document}

\begin{asciiabstract} 
Let p be an odd prime.  Let G be a p-local finite group 
over the extraspecial p-group p_+^{1+2}.  
In this paper we study
the cohomology and the stable splitting of their p-complete
classifying space BG.
\end{asciiabstract}

\begin{htmlabstract}
Let p be an odd prime.  Let G be a p&ndash;local finite group
over the extraspecial p&ndash;group p<sub>+</sub><sup>1+2</sup>.
In this paper we study
the cohomology and the stable splitting of their  p&ndash;complete
classifying space BG.
\end{htmlabstract}

\begin{abstract} 
Let $p$ be an odd prime.  Let $G$ be a $p$--local finite group 
over the extraspecial $p$--group $p_+^{1+2}$.  
In this paper we study
the cohomology and the stable splitting of their  $p$--complete
classifying space $BG$.
\end{abstract}

\maketitle


\section{Introduction}
\label{sec:sec1}

Let us write by $E$ the extraspecial $p$--group $p_+^{1+2}$ 
of order $p$ and exponent $p$ for an odd prime $p$.  Let $G$ be a 
finite group
having $E$ as a $p$--Sylow subgroup, and  $BG$ ($=BG_p^{\wedge}$) the
$p$--completed classifying space of $G$.  In papers by Tezuka and Yagita
\cite{T-Y} and Yagita \cite{Y1,Y2}, the 
cohomology and 
stable splitting for such groups are studied. 
In many cases non isomorphic groups have homotopy equivalent 
$p$--completed 
classifying spaces, showing that there are not too many homotopy 
types of $BG$, as 
was first suggested by  C\,B Thomas \cite{Th} and D Green \cite{G}.

Recently, Ruiz and Viruel \cite{R-V} classified all $p$--local finite groups
for the $p$--group $E$.
Their results show that each classifying space $BG$ is homotopic to 
one of the classifying spaces which were studied in \cite{T-Y} or
classifying spaces of three exotic $7$--local finite groups.
(While descriptions in \cite{T-Y} of $H^*(^2F_4(2)')_{(3)}$ 
$H^*(Fi_{24}')_{(7)}$  and $H^*(\bM)_{(13)}$ contained some errors.)

In \fullref{sec:sec2}, we recall the results of Ruiz and Viruel.
In \fullref{sec:sec3}, we also recall the cohomology $H^*(BE;\bZ)/(p,\surd 0)$.
In this paper, we simply write
\[H^*(BG)=H^*(BG;\bZ)/(p,\surd 0)\]
and study them mainly.  The cohomology $H^{\odd}(BG;\bZ_{(p)})$ 
and the nilpotents
parts in $H^{\even}(BG;\bZ_{(p)})$ are given in \fullref{sec:sec11}.
\fullref{sec:sec4} is devoted to the explanations of stable splitting of $BG$
according to Dietz, Martino and Priddy. In \fullref{sec:sec5}, and 
\fullref{sec:sec6}, we study 
cohomology and stable splitting of $BG$ for a finite group $G$
having a $3$--Sylow  group $(\bZ/3)^2$ or $E=3_+^{1+2}$ respectively.
In \fullref{sec:sec7} and 
\fullref{sec:sec8}, we study cohomology of $BG$ for groups $G$ having 
a $7$--Sylow subgroup $E=7_+^{1+2}$, and the three exotic $7$--local
finite groups. In \fullref{sec:sec9}, we study their  stable splitting.
In \fullref{sec:sec10} we study the cohomology and stable splitting of
the Monster group $\bM$ for $p=13$. 

\section[p--local finite groups over E]{$p$--local finite groups over $E$}
\label{sec:sec2}

Recall that the extraspecial $p$--group $p_+^{1+2}$ has a presentation 
as
\[ p_+^{1+2}={\langle}a,b,c|a^p=b^p=c^p=1, [a,b]=c,\ c\in 
\mathrm{Center}{\rangle}\]
and denote it simply by $E$ in this paper. We consider $p$--local 
finite groups 
over $E$, which are generalization  of groups whose 
$p$--Sylow subgroups are isomorphic to $E$.

The concept of the $p$--local finite groups arose in the work of 
Broto, Levi and Oliver \cite{B-L-O} as a generalization of a classical concept 
of finite groups.  The $p$--local finite group is stated as a triple 
${\langle}S,F,L{\rangle}$ where $S$ is a $p$--group, 
$F$ is a saturated fusion system over a centric linking system $L$ 
over $S$ 
(for a detailed definition, see \cite{B-L-O}). Given a $p$--local finite 
group, we can 
construct its classifying space $B{\langle}S,F,L{\rangle}$ by the 
realization 
$|L|_p^{\wedge}$.  Of course if ${\langle}S,F,L{\rangle}$ is induced 
from a finite 
group $G$ having $S$ as a 
$p$--Sylow subgroup,
then $B{\langle}S,F,L{\rangle}\cong BG$.  However note that in 
general, there exist 
$p$--local finite groups which are not induced from finite groups 
(exotic cases).

Ruiz and Viruel recently determined 
${\langle}p_+^{1+2},F,L{\rangle}$ for 
all odd primes $p$. 
We can check the possibility of existence of finite groups 
only for simple groups and their extensions.  Thus they find 
new exotic $7$--local finite groups.

The $p$--local finite groups ${\langle}E,F,L{\rangle}$ are classified 
by $\Out_F(E)$,
number of $F^{\ec}$--radical $p$--subgroup $A$ (where $A\cong 
(\bZ/p)^2$),
and $\Aut_F(A)$ (for details see \cite{R-V}).  When a $p$--local finite 
group 
is induced from a finite group $G$, then we see easily that 
$\Out_F(E)\cong W_G(E) (=N_G(E)/E.C_G(E))$ and $\Aut_F(A)\cong W_G(A)$. 
Moreover $A$ is $F^{\ec}$--radical 
if and only if $\Aut_F(A)\supset SL_2(\bF_p)$ by \cite[Lemma 4.1]{R-V}.
When $G$ is a sporadic simple group,  $F^{\ec}$--radical follows
$p$--pure.
\begin{thm}[Ruiz and Viruel \cite{R-V}]\label{thm:thm2.1}
If $p\not =3,7,5,13$, then a $p$--local finite group 
${\langle}E,F,L{\rangle}$ is 
isomorphic
to one of the following types.
\begin{enumerate}
\item $E \colon W$ for  $W\subset \Out(E)$ and  $(|W|,p)=1$,
\item $ p^2 \colon SL_2(\bF_p).r$ for  $r|(p-1)$,
\item $SL_3(\bF_p) \colon H$ for $H\cong \bZ/2,\bZ/3$ or $S_3.$
\end{enumerate}
When $p=3,5,7$ or $13$, it is either of one of the previous types or 
of the following types.
\begin{enumerate}\setcounter{enumi}{4}
\item $^2F_4(2)',J_4$, for $p$=3,
\item $Th$ for $p$=5, 
\item $He,He \colon 2,{Fi'}_{24},Fi_{24},O'N,O'N \colon 2$,  
and  three exotic $7$--local finite groups for $p$=7,
\item $\bM$ for $p$=13.
\end{enumerate}
\end{thm}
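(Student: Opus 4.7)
The plan is to classify saturated fusion systems $F$ over $E = p_+^{1+2}$ by the data $(\Out_F(E), \{\Aut_F(A)\})$ as $A$ ranges over $F$-centric, $F$-radical subgroups; a version of Alperin's fusion theorem for fusion systems then shows this data determines $F$ up to isomorphism, and determines the centric linking system $L$. First I would identify the candidate radical subgroups. Since $E$ has exponent $p$ and order $p^3$, its proper nontrivial subgroups of order $p^2$ are all elementary abelian of rank $2$, and there are exactly $p+1$ such subgroups, each containing $Z(E)$. So the only possible $F$-centric, $F$-radical subgroups are $E$ itself and some $\Out_F(E)$-invariant collection of these $A \cong (\bZ/p)^2$, with radicality governed by the criterion $\Aut_F(A) \supset SL_2(\bF_p)$ from \cite[Lemma 4.1]{R-V}.

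Next I would enumerate the admissible $\Out_F(E)$ together with the compatible $\Aut_F(A)$. The identification $\Out(E) \cong GL_2(\bF_p)$, via the action on $E/Z(E)$, forces $\Out_F(E)$ to be a $p'$-subgroup of $GL_2(\bF_p)$ (by the saturation axiom, since $E$ is a Sylow). I would then run a case analysis on the orbits of $\Out_F(E)$ acting on the $p+1$ subgroups $A$, and on which of those orbits consist of radical subgroups. If no $A$ is radical, then only $\Out_F(E)$ matters and the fusion system has the form $E\colon W$, yielding type (1). If a single orbit is radical, the stabilizer compatibility between $\Out_F(E)$ and $\Aut_F(A)$ forces type (2), namely $p^2\colon SL_2(\bF_p).r$ with $r \mid p-1$. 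If $\Out_F(E)$ acts transitively on all $p+1$ subgroups (all then radical), the amalgam of the $\Aut_F(A)$'s with $\Out_F(E)$ reproduces the fusion system of $SL_3(\bF_p)\colon H$ for $H \subset S_3$, giving type (3).

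Finally, for the small primes $p = 3, 5, 7, 13$, extra subgroups of $GL_2(\bF_p)$ and extra compatible overgroups of $SL_2(\bF_p)$ inside $\Aut_F(A)$ become arithmetically available (typically when $p \pm 1$ contains small prime factors matching the order of an automorphism of the $SL_2(\bF_p)$-data). For each such extra candidate I would check the saturation axioms directly on the radical subgroups and, when satisfied, attempt to match the resulting fusion system against the known sporadic simple groups; matches yield entries (4)--(7), while the three configurations at $p=7$ which pass saturation but cannot be realized by any finite group are recorded as exotic. The main obstacle is the verification step: proving that the three exceptional $7$-local fusion systems truly satisfy the saturation axioms (especially the extension axiom on $\Aut_F(A)$ and its compatibility with $\Out_F(E)$), while at the same time ruling out all remaining candidate configurations as either non-saturated or isomorphic to one already on the list — this is precisely the combinatorial core of \cite{R-V}.
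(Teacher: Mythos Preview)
The paper does not prove this theorem; it is stated as a citation of Ruiz and Viruel \cite{R-V} and used as input for the rest of the paper. There is therefore no proof in the paper to compare your proposal against.

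That said, your sketch is a fair outline of the strategy actually used in \cite{R-V}: classify by the pair $(\Out_F(E),\{\Aut_F(A)\})$, use Alperin's fusion theorem to reduce to this data, enumerate $p'$--subgroups of $GL_2(\bF_p)$ and compatible overgroups of $SL_2(\bF_p)$ in $\Aut(A)$, and then match against known finite groups or declare exotic. One point to tighten: in your type (2)/(3) dichotomy you implicitly assume that either no $A$ is radical, or one orbit is radical, or all $p+1$ are radical with $\Out_F(E)$ transitive; but the sporadic cases at $p=3,5,7,13$ arise precisely when \emph{some but not all} orbits are radical, or when $\Aut_F(A)$ is a proper overgroup of $SL_2(\bF_p)$ not of the form $SL_2(\bF_p).r$ inside $GL_2(\bF_p)$. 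So types (4)--(8) are not merely ``extra arithmetic coincidences'' layered on top of (1)--(3); they are genuinely new branches of the case analysis on the number and Weyl groups of radical $A$'s. Your closing paragraph acknowledges this is the combinatorial core of \cite{R-V}, which is accurate, but the body of your argument should not suggest that (1)--(3) already exhaust the generic orbit structures.
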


For case (1), we know that
$H^*(E{\colon}W)\cong H^*(E)^W$.  Except for these extensions and 
exotic cases,
all $H^{\even}(G;\bZ)_{(p)}$ are studied by Tezuka and Yagita \cite{T-Y}. In 
\cite{Y1}, the author studied
ways to distinguish
$H^{\odd}(G;\bZ)_{(p)}$ and $H^*(G;\bZ/p)$ from 
$H^{\even}(G;\bZ)_{(p)}$. 
The stable splittings for such $BG$ are studied in \cite{Y2}.
However there were some errors in the cohomology of 
$^2F_4(2)',{Fi'}_{24},\bM$.
In this paper, we study cohomology and stable splitting of $BG$
for $p=3$,$7$ and $13$ mainly.

\section{Cohomology}
\label{sec:sec3}

In this paper we mainly consider the cohomology 
$H^*(BG;\bZ)/(p,\surd 0)$
where $\surd 0$ is the ideal generated by nilpotent elements.  So we 
write it simply
\[H^*(BG)=H^*(BG;\bZ)/(p,\surd 0).\]
Hence we have
\[H^*(B\bZ/p)\cong \bZ/p[y],\qua H^*(B(\bZ/p)^2)\cong \bZ/p[y_1,y_2]
\text{ with }|y|=|y_i|=2.\]
Let us write $(\bZ/p)^2$ as $A$  
and let an $A$--subgroup of $G$ mean a subgroup
isomorphic to $(\bZ/p)^2$.

The cohomology of the extraspecial $p$ group $E=p_+^{1+2}$ is well 
known.
In particular recall (Leary \cite{L2} and Tezuka--Yagita \cite{T-Y})
\begin{equation}\label{eqn:eqn3.1}
H^*(BE)\cong \left(\bZ/p[y_1,y_2]/(y_1^py_2-y_1y_2^p)\oplus 
\bZ/p\{C\}\right)\otimes \bZ/p[v],
\end{equation}
where $|y_i|=2,|v|=2p,|C|=2p-2$ and $Cy_i=y_i^{p}$, 
$C^2=y_1^{2p-2}+y_2^{2p-2}-y_1^{p-1}y_2^{p-1}$.
In this paper we  write $y_i^{p-1}$ by $Y_i$, and $v^{p-1}$ by $V$,
eg $C^2=Y_1^2+Y_2^2-Y_1Y_2$.
The Poincare series of the subalgebra generated by $y_i$ and $C$ are 
computed
\[\frac{1-t^{p+1}}{(1-t)(1-t)}+t^{p-1}=
\frac{(1+\cdots+t^{p-1})+t^{p-1}}{(1-t)}
=\frac{(1+\cdots+t^{p-1})^2-t^{2p-2}}{(1-t^{p-1})}.\]
From this  Poincare series and \eqref{eqn:eqn3.1}, we get the another
expression of $H^*(BE)$
\begin{equation}\label{eqn:eqn3.2}
H^*(BE)\cong \bZ/p[C,v]\left\{y_1^iy_2^j|0\le i,j\le p-1,
(i,j)\not =(p-1,p-1)\right\}.
\end{equation}
The $E$ conjugacy classes of $A$--subgroups are written by
\begin{align*}
A_i &={\langle}c,ab^i{\rangle} \text{ for }0\le i\le p-1 \\ 
A_{\infty} &={\langle}c,b{\rangle}.
\end{align*}
Letting $H^*(BA_i)\cong \bZ/p[y,u]$ and writing $i_{A_i}^*(x)=x|A_i$
for the inclusion $i_{A_i}{\co}A_i\subset E$, the restriction 
images are given by
\begin{align}\label{eqn:eqn3.3}
y_1|A_i=y \text{ for } i\in \bF_p, y_1|A_{\infty}=0,
\qua & y_2|A_i=iy \text{ for } i\in \bF_p, y_2|A_{\infty}=y,\\   \nonumber
C|A_i=y^{p-1},\qua & v|A_i=u^p-y^{p-1}u\text{ for all }i.
\end{align}
For an element $g=\bigl(\begin{smallmatrix} \alpha&\beta\\ \gamma&\delta
\end{smallmatrix}\bigr)
\in GL_2(\bF_p)$, we can identify  $GL_2(\bF_p)\cong  \Out(E)$ by
\[g(a)=a^{\alpha}b^{\gamma},
g(b)=a^{\beta}b^{\delta},\ g(c)=c^{\det(g)}.\]
Then the action of $g$  on the cohomology is given (see Leary \cite{L2} 
and Tezuka--Yagita \cite[page 491]{T-Y}) by
\begin{equation}\label{eqn:eqn3.4}
g^*C=C,\ g^*y_1=\alpha y_1+\beta y_2,
\ g^*y_2=\gamma y_1+\delta y_2,\ g^*v=(\det(g))v.
\end{equation}
Recall that $A$ is $F^{\ec}$--radical if and only if $SL_2(\bF_p)
\subset W_G(A)$ (see Ruiz--Viruel \cite[Lemma 4.1]{R-V}).

\begin{thm}[Tezuka--Yagita {{\cite[Theorem 4.3]{T-Y}}},
Broto--Levi--Oliver \cite{B-L-O}]
\label{thm:thm3.1}
Let $G$ have the $p$--Sylow subgroup $E$, then we have the isomorphism
\[H^*(BG)\cong H^*(BE)^{W_G(E)}\cap 
_{A{\colon}F^{\ec}-\rad}i_A^{*-1}H^*(BA)
^{W_G(A)}.\]
\end{thm}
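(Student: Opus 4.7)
The plan is to combine the Broto--Levi--Oliver inverse limit theorem for cohomology of $p$--local finite groups with Alperin's fusion theorem for saturated fusion systems. First I would apply \cite{B-L-O} to the $p$--local finite group ${\langle}E,F,L{\rangle}$: the cohomology $H^*(BG;\bZ_{(p)})$ is identified with the inverse limit of $H^*(BP;\bZ_{(p)})$ over the $F$--centric subgroups $P\subset E$, and this limit embeds into $H^*(BE;\bZ_{(p)})$ as the subring of \emph{stable} elements, ie those $x$ satisfying $\varphi^*(x|_P)=x|_Q$ for every $F$--morphism $\varphi{\co}Q\to P$.  When $G$ is an actual finite group having $E$ as a $p$--Sylow, this specialises to the classical Cartan--Eilenberg stable element theorem.

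Next I would invoke Alperin--Goldschmidt fusion for saturated fusion systems: every morphism in $F$ factors as a composition of restrictions of automorphisms in $\Aut_F(P)$ for $P$ an $F$--centric, $F$--radical subgroup. Consequently the stability condition collapses to two families of checks: $x$ must be $\Aut_F(E)=W_G(E)$--invariant, and for each proper $F^{\ec}$--radical subgroup $P\subset E$ the restriction $x|_P$ must be $\Aut_F(P)$--invariant.

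In our setting with $E=p_+^{1+2}$, each maximal abelian subgroup $A\cong(\bZ/p)^2$ is $F$--centric (it is self-centralising in $E$), and by the criterion recalled just before the theorem, such an $A$ is $F^{\ec}$--radical exactly when $SL_2(\bF_p)\subset W_G(A)=\Aut_F(A)$. No other proper subgroup of $E$ is $F^{\ec}$--radical, since $F$--centricity forces the centraliser to lie inside the subgroup, ruling out subgroups of order $p$. Assembling these ingredients gives the integral formula
\[H^*(BG;\bZ_{(p)})\cong H^*(BE;\bZ_{(p)})^{W_G(E)}\cap\bigcap_{A{\colon}F^{\ec}-\rad}i_A^{*-1}H^*(BA;\bZ_{(p)})^{W_G(A)},\]
and the theorem follows by quotienting by the ideal $(p,\surd 0)$; the quotient commutes with the operations on the right because each $i_A^*$ preserves nilpotent elements.

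The main obstacle will be the reduction from the full inverse limit to stability on $F^{\ec}$--radical subgroups only: this relies on Alperin--Goldschmidt fusion in the saturated setting, and one must verify that no hidden compatibility condition survives from composing morphisms through non-radical $F$--centric subgroups. A minor secondary point is tracking that the $(p,\surd 0)$--quotient is compatible with the intersection, but this is routine since nilpotence is visibly preserved by each $i_A^*$ and can also be controlled via Quillen's $F$--isomorphism theorem on restriction to elementary abelian subgroups.
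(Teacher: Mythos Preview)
Your derivation of the integral formula
\[H^*(BG;\bZ_{(p)})\cong H^*(BE;\bZ_{(p)})^{W_G(E)}\cap\bigcap_{A{\colon}F^{\ec}-\rad}i_A^{*-1}H^*(BA;\bZ_{(p)})^{W_G(A)}\]
via Broto--Levi--Oliver plus Alperin--Goldschmidt is fine; the paper simply quotes this as the known input from \cite{T-Y} and \cite{B-L-O} and does not reprove it.  The substantive content of the paper's proof of \fullref{thm:thm3.1} lies entirely in the passage from the integral statement to the statement modulo $(p,\surd 0)$, and this is where your proposal has a gap.

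You justify the reduction by saying that ``each $i_A^*$ preserves nilpotent elements'', and you call the compatibility with the intersection ``routine''.  But the operation you do \emph{not} address is taking $W_G(E)$--invariants.  The nontrivial direction is surjectivity: given $x\in H^*(BE)^{W_G(E)}$ satisfying the $A$--conditions, one must produce a $W_G(E)$--invariant lift $x'\in H^*(BE;\bZ)$; only then can one check the integral $A$--conditions and conclude $x'\in H^*(BG;\bZ_{(p)})$.  The paper isolates exactly this lifting step as \fullref{lem:lem11.1}, proved by a Bockstein argument using that $H^{\odd}(BE;\bZ)$ is just $p$--torsion.  The second ingredient the paper uses is that $H^{\even>0}(BA;\bZ)\cong H^{*>0}(BA)$, so that once an invariant integral lift $x'$ exists, the restriction $i_A^*(x')$ is literally equal to $i_A^*(x)$ and hence $W_G(A)$--invariant in $H^*(BA;\bZ)$ as well.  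Your argument supplies neither of these points.

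The gap is repairable: since $(|W_G(E)|,p)=1$, an averaging argument shows that taking $W_G(E)$--invariants is exact on the relevant $p$--torsion modules, which gives the required lift directly.  But you should state this explicitly rather than asserting that compatibility with $i_A^*$ alone suffices; as written, the surjectivity half of the theorem is not established.
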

In \cite{B-L-O} and \cite{T-Y}, proofs of the above theorem are  given 
only for $H^*(BG;\bZ_{(p)})$. A proof for $H^*(BG)$ is explained in 
\fullref{sec:sec11}.

\section{Stable splitting}
\label{sec:sec4}

Martino--Priddy prove the following theorem of complete stable 
splitting.
\begin{thm}[Martino--Priddy \cite{M-P}]
\label{thm:thm4.1}
Let $G$ be a finite group with a $p$--Sylow subgroup $P$.
The complete stable splitting of $BG$ is given by 
\[BG\sim \vee \rank A(Q,M)X_M\]
where indecomposable summands $X_M$ range over isomorphic classes of 
simple \linebreak $\bF_p[\Out(Q)]$--modules $M$ and over isomorphism classes of 
subgroups 
$Q\subset P$.
\end{thm}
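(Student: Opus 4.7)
The plan is to translate the splitting problem into pure algebra by identifying the indecomposable stable summands of $BG$ with the primitive idempotents of the $p$--completed double Burnside ring
\[\widehat{A}(G,G) \;\cong\; \{BG,BG\},\]
using the Segal conjecture (proved by Carlsson, extended to maps between classifying spaces by Lewis--May--McClure), which identifies stable self-maps of $p$--completed classifying spaces with pro-$p$ completions of Burnside bimodules. Once this identification is in place, Krull--Schmidt holds in the pro-$p$ complete setting, so each summand of $BG$ corresponds to a primitive idempotent of $\widehat{A}(G,G)$, and isomorphism classes of summands correspond bijectively to simple modules over the semisimple quotient $\widehat{A}(G,G)/J$.

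Next I would reduce to the case $G = P$ by a standard transfer--restriction argument: the composite $BG \to BP \to BG$ of transfer followed by inclusion is the identity on $\bF_p$--cohomology and hence represents an idempotent $e_G \in \widehat{A}(P,P)$ that cuts $BG$ out as a stable summand of $BP$. Every indecomposable summand of $BG$ is therefore an indecomposable summand of $BP$, and the classification reduces to the $p$--group case. For a $p$--group $P$, Benson--Feshbach (refining Nishida) compute
\[\widehat{A}(P,P)/J \;\cong\; \prod_{[Q]} M_{n_Q}(\bF_p[\Out(Q)]),\]
the product running over $P$--conjugacy classes of subgroups $Q \leq P$; the simple modules on the right-hand side are precisely the pairs $(Q,M)$ with $M$ a simple $\bF_p[\Out(Q)]$--module. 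Idempotent lifting in the pro-$p$ complete (hence semiperfect) ring $\widehat{A}(P,P)$ produces primitive idempotents whose images $X_M$ are the desired indecomposable summands.

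The remaining task is to pin down the multiplicity with which each $X_M$ occurs in $BG$. This amounts to pairing the idempotent $e_G$ with the primitive idempotent cutting out $X_M$ in $\widehat{A}(P,P)$, which gives
\[\mathrm{mult}(X_M \text{ in } BG) \;=\; \rank A(Q,M),\]
where $A(Q,M)$ is the $\bF_p[\Out(Q)]$--module assembled from $G$--conjugacy classes of embeddings $Q \hookrightarrow G$, paired against $M$ over $\Out(Q)$. The main obstacle I expect is the idempotent-lifting step together with the verification that the resulting $X_M$ are genuinely indecomposable and pairwise inequivalent: one must establish that $\widehat{A}(P,P)$ is semiperfect as a pro-$p$ complete ring, so that the Krull--Schmidt decomposition of its finitely generated projective modules matches the block structure of the semisimple quotient. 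This is precisely where the $p$--completion hypothesis is essential, since without it the Burnside ring is only $\bZ_{(p)}$--local rather than complete, and uniqueness of lifts (and hence uniqueness of the splitting) can fail.
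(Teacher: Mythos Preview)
The paper does not prove this theorem; it is quoted verbatim from Martino--Priddy \cite{M-P} and used as a black box. There is therefore no ``paper's own proof'' to compare against. Your outline is a reasonable reconstruction of the Benson--Feshbach/Martino--Priddy machinery (Segal conjecture to identify $\{BP,BP\}$ with a completed double Burnside ring, semiperfectness to lift idempotents, and the computation of the semisimple quotient to index the summands by pairs $(Q,M)$), and is broadly faithful to how the cited references actually proceed. One small caution: the isomorphism you write for $\widehat{A}(P,P)/J$ as a product of matrix rings over $\bF_p[\Out(Q)]$ is not quite right as stated, since $\bF_p[\Out(Q)]$ is generally not semisimple; what one actually gets is a product of matrix rings over the endomorphism rings of the simple $\bF_p[\Out(Q)]$--modules, and the indexing is by pairs $([Q],M)$ rather than by $[Q]$ alone. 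This does not affect the conclusion but would need to be stated correctly in a full proof.
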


\begin{rem}This theorem also holds for $p$--local finite groups 
over $P$,
because all arguments for the proofs are done about the induced maps
from some fusion systems of $P$ on stable homotopy types of
related classifying spaces.
\end{rem}

For the definition of $\rank A(Q,M)$ see Martino and Priddy \cite{M-P}.  In particular, 
when $Q$ is not a subretract
(that is not a proper retract of a  subgroup) of $P$ 
(see \cite[Definition 2]{M-P}) and when 
$W_G(Q)\subset \Out(Q)\cong GL_n(\bF_p)$
(see \cite[Corollary 4.4 and the proof of Corollary 4.6]{M-P}), 
the rank of $A(Q,M)$ is computed by
\[\rank A(Q,M)=\sum \dim_{\bF_p}(\wbar W_G(Q_i) M),\]
where $\wbar W_G(Q_i)=\sum_{x\in W_G(Q_i)}x$ in $\bF_p[GL_n(F_p)]$ and 
$Q_i$ ranges 
over representatives of $G$--conjugacy classes of 
subgroups isomorphic to $Q$.

Recall that $\Out(E)\cong \Out(A)\cong GL_2(\bF_p)$. The simple
modules of $G=GL_2(\bF_p)$ are well known.  Let us think of 
$A$ as  the natural two-dimensional representation, and $\det$ the 
determinant 
representation of $G$.  Then there are $p(p-1)$ simple 
$\bF_p[G]$--modules given by 
$M_{q,k}=S(A)^q\otimes (\det)^k$ for $0\le q\le p-1,0\le k\le p-2$.  
Harris and Kuhn \cite{H-K} determined the stable splitting
of abelian $p$--groups.  In particular, they showed
\begin{thm}[Harris--Kuhn \cite{H-K}]
\label{thm:thm4.2}
Let $\tilde X_{q,k}=X_{M_{q,k}}$ (resp. $L(1,k)$) 
identifying $M_{q,k}$ as an $\bF_p[\Out(A)]$--module (resp. $M_{0,k}$ as 
an
$\bF_p[\Out(\bZ/p)]$--module).
There is the complete stable splitting 
\[BA\sim \vee_{q,k}(q+1)\tilde X_{q,k}\vee _{q\not =0}(q+1)L(1,q),
\]
where $0\le q\le p-1$, $0\le k\le p-2$. 
\end{thm}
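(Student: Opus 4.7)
The plan is to apply Martino--Priddy's Theorem 4.1 with $G=P=A$, viewing $A=(\bZ/p)^2$ as its own Sylow subgroup with the trivial fusion system. Since $A$ is abelian, every Weyl group $W_A(Q)=N_A(Q)/(Q\cdot C_A(Q))$ is trivial, and $A$--conjugacy classes of subgroups coincide with the subgroups themselves. The relevant isomorphism classes of subgroups are $\bZ/p$ (with $p+1$ distinct lines through the origin) and $A$ itself; the trivial subgroup contributes only the basepoint.

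For $Q=A$, $\Out(A)\cong GL_2(\bF_p)$, and the standard modular classification identifies the simple $\bF_p[GL_2(\bF_p)]$--modules as $M_{q,k}=S^q(A)\otimes \det^k$ of dimension $q+1$, for $0\le q\le p-1$ and $0\le k\le p-2$. Since $A$ is not a proper subretract of itself, the rank formula recalled after Theorem 4.1 applies directly, and with $W_A(A)=1$ yields $\rank A(A,M_{q,k})=\dim_{\bF_p}M_{q,k}=q+1$, producing the summand $(q+1)\tilde X_{q,k}$.

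For $Q\cong\bZ/p$ the situation is more subtle because each $\bZ/p\subset A$ is a retract of $A$, so the Martino--Priddy formula cannot be applied naively. Here I would follow Harris--Kuhn and decompose $\Sigma^\infty BA_+$ using primitive idempotents in the semigroup algebra $\bF_p[M_2(\bF_p)]$. The simple modules of this semigroup are parametrized by the irreducibles of $GL_k(\bF_p)$ for $0\le k\le 2$; in rank one these are the characters $\chi_q$ of $\bF_p^*\cong \Out(\bZ/p)$. Each non-trivial character ($q\ne 0$) extends to a simple $M_2(\bF_p)$--module of dimension $q+1$, yielding the summand $(q+1)L(1,q)$, while the trivial character $\chi_0$ is absorbed into the basepoint and the $\tilde X_{0,k}$ summands.

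The main obstacle is precisely this last multiplicity count: establishing that the correct coefficient is $q+1$, rather than the naive $p+1$ one would get by simply counting the lines in $A$. This requires the Schur-algebra (or semigroup-idempotent) machinery underlying Harris--Kuhn's original argument, and I would cross-check the final answer by a Poincar\'e-series comparison against $H^*(BA;\bF_p)\cong \bF_p[y_1,y_2]$.
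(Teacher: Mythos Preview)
The paper does not give its own proof of this statement: Theorem~4.2 is simply quoted from Harris--Kuhn \cite{H-K} and invoked as background. There is nothing in the paper to compare your argument against beyond the citation itself.

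That said, your proposal has a structural weakness worth flagging. The easy half (the $\tilde X_{q,k}$ summands from $Q=A$) is fine: with $G=A$ the Weyl group is trivial and the Martino--Priddy rank formula gives $\dim M_{q,k}=q+1$ directly. But for the $L(1,q)$ pieces you correctly observe that each line $\bZ/p\subset A$ is a retract of $A$, so the simplified rank formula after Theorem~4.1 does not apply---and at that point you write ``I would follow Harris--Kuhn.'' Since Theorem~4.2 \emph{is} the Harris--Kuhn theorem, this is circular: the content of their argument is precisely the semigroup/Schur-algebra computation that produces the multiplicities $q+1$ (rather than $p+1$) for the rank-one pieces, and you have not reproduced it. A Poincar\'e-series check is a useful sanity test but does not by itself pin down the stable splitting. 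If you want a self-contained proof, you need to actually carry out the idempotent decomposition in $\bF_p[M_2(\bF_p)]$ or an equivalent calculation, not just gesture at it.
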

The summand $L(1,p-1)$ is usually written by $L(1,0)$.

It is also known $H^+(L(1,q))\cong  \bZ/p[y^{p-1}]\{y^q\}. $
Since we have the isomorphism
\[H^{2q}(BA)\cong (\bZ/p)^{q+1} \cong H^{2q}((q+1)L(1,q)),
\text{ for }1\le q\le p-1,\]
we get  $H^*(\tilde X_{q,k})\cong 0$ for $*\le 2(p-1)$.
\begin{lemma}\label{lem:lem4.3}
Let $H$ be a finite solvable group with $(p,|H|)=1$ and 
$M$ be an  $\bF_p[H]$--module.
Then we have $\wbar H(M)=(\sum_{x\in H}x)M\cong M^H\cong H^0(H;M)$.
\end{lemma}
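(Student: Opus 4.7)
The plan is to exploit the coprimality hypothesis $(p,|H|)=1$ directly: this makes $|H|$ a unit in $\bF_p$, so that (up to the scalar $|H|$) the ``norm'' element $\sum_{x\in H}x\in\bF_p[H]$ becomes an idempotent projector onto the $H$--fixed submodule. The solvability of $H$ will play no role; only the order--coprimality condition matters, and the argument is essentially Maschke's theorem in disguise.

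First I would check the inclusion $\overline{H}(M)\subseteq M^H$. This is the standard observation that for any $y\in H$ and $m\in M$, left multiplication by $y$ permutes the summands, giving $y\cdot\bigl(\sum_{x\in H}x\bigr)m=\bigl(\sum_{x\in H}yx\bigr)m=\bigl(\sum_{x\in H}x\bigr)m$, so every element of $\overline{H}(M)$ is fixed by $H$. For the reverse inclusion $M^H\subseteq\overline{H}(M)$, take $m\in M^H$; then $\bigl(\sum_{x\in H}x\bigr)m=|H|\cdot m$, and since $|H|$ is invertible in $\bF_p$ we can solve $m=|H|^{-1}\bigl(\sum_{x\in H}x\bigr)m\in\overline{H}(M)$. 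The identification $M^H\cong H^0(H;M)$ is then immediate from the definition of zeroth group cohomology.

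There is no substantial obstacle: the argument reduces to a one-line averaging calculation, and the ``solvable'' hypothesis in the statement is essentially a red herring --- only the coprimality $(p,|H|)=1$ is actually used. Solvability is presumably mentioned because the lemma will be applied in situations where $H$ is a subgroup of some $GL_n(\bF_p)$--type Weyl group with $(p,|H|)=1$, in which case it comes along for free and may be convenient for the applications that follow.
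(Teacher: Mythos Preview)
Your proof is correct and in fact more direct than the paper's. The paper does not use the averaging/idempotent argument at all; instead it proceeds by induction on the solvable structure of $H$. First it treats the cyclic case $H=\bZ/s$ by observing $\mathrm{Image}(\wbar H)\subset\Ker(1-x)=M^H$ and then invoking $H^1(H;M)=0$ (which holds since $(|H|,p)=1$) to kill the quotient $\Ker(1-x)/\mathrm{Image}(\wbar H)$. Then for a general solvable $H$ it picks a normal subgroup $H'$ with quotient $H''$, chooses a set-theoretic section $\sigma$ of $H\to H''$, and writes $\wbar H(M)=\sigma(\wbar{H''})\,\wbar{H'}(M)=\wbar{H''}(M^{H'})=(M^{H'})^{H''}=M^H$, using the inductive hypothesis on $H'$ and $H''$. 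So the solvability hypothesis is genuinely used in the paper's argument, even though --- as you correctly observe --- it is unnecessary for the conclusion. Your approach is shorter, avoids the induction and the appeal to $H^1=0$, and proves the stronger statement for arbitrary finite $H$ of order prime to $p$.
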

\begin{proof}
First assume $H=\bZ/s$ and $x\in \bZ/s$ its generator.  Then
\[\wbar H(M)=(1+x+\cdots+x^{s-1})H.\]
Since $(1-x^s)=0$, we see $\Ker(1-x)\supset \mathrm{Image}(\wbar H)$.
The facts that  $M$ is a $\bZ/p$--module and $(|H|,p)=1$ imply 
$H^*(H;M)=0$
for $*>0$.  Hence
\[\Ker(1-x)/\mathrm{Image}(1+\cdots+x^{s-1})\cong H^1(H;M)=0.\]
Thus we have $\wbar H(M)=\Ker(1-x)=M^H$.

Suppose that $H$ is a group such that 
\[0\to H'\to H\stackrel{\pi}{\to} H''\to 0\]
and  that $\wbar H'(M')=(M')^{H'}$ (resp. $\wbar H''(M'')=(M'')^{H''}$) 
for each
$\bZ/p[H']$--module $M'$ (resp. $\bZ/p[H'']$--module $M''$).
Let $\sigma$ be a (set theoretical) section of $\pi$ and denote
$\sigma(\wbar H'')=\sum_{x\in H''}\sigma(x)\in \bF_p[H]$.
Then 
\[\wbar H(M)=\sigma(\wbar H'') \wbar H'(M)=
\sigma(\wbar H'')(M^{H'})=\wbar H''(M^{H'})=(M^{H'})^{H''}=M^H\]
here the third equation follows from that we can identify
$M^{H'}$ as an $\bF_p[H'']$--module.
Thus the lemma is proved.\end{proof}

It is known from a result of Suzuki \cite[Chapter 3 Theorem 6.17]{S} 
that any subgroup of 
$SL_2(\bF_{p^n})$, whose order is prime to $p$ 
is isomorphic to a subgroup of   
$\bZ/s$, $4S_4$, $SL_2(\bF_3)$, $SL_2(\bF_5)$ or 
\[Q_{4n}={\langle}x,y|x^n=y^2,y^{-1}xy=x^{-1}{\rangle}.\]

\begin{cor}\label{cor:cor4.4}
Let $H\subset GL_2(\bF_p)$ with $(|H|,p)=1$ and  $H$ do not have a 
subgroup
isomorphic to
$SL_2(\bF_3)$ nor $SL_2(\bF_5)$.    Let $G=A{\co}H$ and let us 
write
$BG\sim
\vee _{q,k}\tilde n(H)_{q,k}\tilde X_{q,k}\vee _{q'}\tilde 
m(H)_{q'}L(1,q').$
Then 
\begin{align*}
\tilde n(H)_{q,k} &=\rank_pH^0(H;M_{q,k}), \\
\tilde m(H)_{q'} &=\rank_pH^{2q'}(BG).
\end{align*}
In particular $\tilde n(H)_{q,0}=\rank_pH^{2q}(BG)$.
\end{cor}

\begin{proof}
Since $H^*(\tilde X_{q,k})\cong 0$ for $*\le 2(p-1)$,
it is immediate that $\tilde m(H)_{q'}=\rank_pH^{2q'}(G)$.
Since $ GL_2(\bF_p)\cong SL_2(\bF_p).\bF_p^*$ and $\bF_p^*\cong 
\bZ/(p-1)$, 
each subgroup $H$
in the above satisfies the condition in \fullref{lem:lem4.3}.
The first equation is  immediate from the lemma.
\end{proof}

Next consider the stable splitting for the extraspecial $p$--group $E$.
Dietz and Priddy prove the following theorem.
\begin{thm}[Dietz--Priddy \cite{D-P}]
\label{thm:thm4.5}
Let $X_{q,k}=X_{M_{q,k}}$ (resp. $L(2,k)$, $L(1,k)$) 
identifying $M_{q,k}$ as an $\bF_p[\Out(E)]$--module (resp. $M_{p-1,k}$ 
as an
$\bF_p[\Out(A)]$--module, $\bF_p[\Out(\bZ/p)]$--module).  
There is the complete stable splitting 
\[BE\sim \vee_{q,k}(q+1)X_{q,k}\vee _{k}(p+1)L(2,k)\vee_{q\not 
=0}(q+1)L(1,q)\vee 
L(1,p-1)\]
where $0\le q\le p-1$, $0\le k\le p-2$.
\end{thm}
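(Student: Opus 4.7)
The plan is to apply the Martino--Priddy decomposition of \fullref{thm:thm4.1} to $G = E$ and compute the multiplicity $\rank A(Q, M)$ for each $E$--conjugacy class of subgroup $Q \subset E$ and each simple $\bF_p[\Out(Q)]$--module $M$. The relevant conjugacy classes of subgroups are: $E$ itself; the $p+1$ maximal elementary abelian subgroups $A_0,\ldots,A_{p-1},A_\infty$, each normal in $E$ and hence its own conjugacy class; and the cyclic subgroups of order $p$, namely the central subgroup $Z(E) = \langle c\rangle$ together with, inside each $A_i$, a single orbit of $p$ non-central cyclic subgroups permuted transitively by $E/A_i \cong \bZ/p$.

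For $Q = E$ we have $W_E(E) = 1$, so the formula displayed after \fullref{thm:thm4.1} gives $\rank A(E, M_{q,k}) = \dim M_{q,k} = q+1$, producing the $(q+1) X_{q,k}$ summands. For $Q = A_i$, the Weyl group $W_E(A_i) = E/A_i \cong \bZ/p$ embeds in $\Out(A_i) = GL_2(\bF_p)$ as a regular unipotent element (fixing $Z(E)$ pointwise and acting nontrivially on $A_i/Z(E)$). Since $|W_E(A_i)| = p$, \fullref{lem:lem4.3} does not apply; instead one uses the identity $1 + x + \cdots + x^{p-1} = (1-x)^{p-1}$ in $\bF_p[\bZ/p]$. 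Multiplication by $\wbar W_E(A_i)$ on a single Jordan block of size $s \le p$ is zero when $s < p$ and has rank one when $s = p$. Since the regular unipotent acts on $M_{q,k} = S^q(V) \otimes \det^k$ as a single Jordan block of size $q+1 \le p$, the rank is $1$ precisely when $q = p-1$; summing over the $p+1$ conjugacy classes of $A_i$ yields the $(p+1) L(2, k)$ summands.

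The remaining and technically hardest step is the contribution from cyclic subgroups of order $p$. Each such subgroup of $E$ is a proper retract of some $A_i$, hence a subretract of $E$, so the simplified formula from \fullref{thm:thm4.1} no longer applies directly; this is the main obstacle. I would either invoke the refined Martino--Priddy machinery in the double Burnside ring, with idempotents tailored to the fusion pattern of cyclic subgroups in $E$, or verify the multiplicities by a low-degree cohomological consistency check. Concretely, from $H^+(L(1,q)) \cong \bZ/p[y^{p-1}]\{y^q\}$, the vanishing $H^*(X_{q,k}) = 0$ for $* \le 2(p-1)$, and the description of $H^*(BE)$ in \eqref{eqn:eqn3.1}, one determines the multiplicity of each $L(1,q)$ summand degree by degree, forcing $(q+1)$ copies for $q \neq 0$ together with one additional copy of $L(1,p-1)$. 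Combining the three calculations produces the stated decomposition.
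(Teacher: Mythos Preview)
The paper does not prove \fullref{thm:thm4.5}; it is stated as a citation to Dietz--Priddy \cite{D-P}. What the paper does provide, in the paragraphs following the statement, are explanatory verifications of the individual multiplicities, and your outline tracks these closely. Your treatment of the dominant summands $X_{q,k}$ (via $W_E(E)=1$) and of the $L(2,k)$ summands (via the unipotent action of $W_E(A_i)\cong U$ on $M_{q,k}$) is essentially identical to the paper's discussion: the paper carries out the same computation of $\wbar W_E(A_i)\,y_1^sy_2^l$ explicitly rather than invoking the Jordan block language, but the content is the same.

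For the $L(1,q)$ summands your proposal diverges from the paper. You flag the subretract issue and propose either refined double Burnside ring machinery or a cohomological rank count in low degrees. The paper takes a third, more constructive route: it uses the splitting $E/\langle c\rangle \cong \wbar A_i \oplus \wbar A_{-i}$ to produce explicit projections $\pr_i\colon E\to \wbar A_i\cong\bZ/p$, computes $\pr_i^*(y)=\tfrac12 y_1+\tfrac1{2i}y_2$, and checks that the elements $(\tfrac12 y_1+\tfrac1{2i}y_2)^k$ for $i=0,\dots,k$ are linearly independent in $H^{2k}(E/\langle c\rangle;\bZ/p)\cong(\bZ/p)^{k+1}$ when $k<p-1$. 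This exhibits the $k{+}1$ retracts $L(1,k)$ directly and bypasses the subretract difficulty in the Martino--Priddy formula. The extra copy of $L(1,p-1)$ and the total count in degree $2(p-1)$ are then read off from $H^{2(p-1)}(BE)\cong(\bZ/p)^{p+1}$, which is exactly your proposed consistency check. So your second suggestion and the paper's approach are compatible; the paper simply supplies the missing geometric input (the projections) that makes the count an actual argument rather than a numerical coincidence.
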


\begin{rem} Of course $\tilde X_{q,k}$ is different from $X_{q,k}$ 
but
$\tilde X_{p-1,k}=L(2,k)$.
\end{rem}

The number of $L(1,q)$ for $1\le q{<}p-1$ is given by the following.
Let us consider the decomposition $E/{\langle}c{\rangle}\cong \wbar 
A_i\oplus \wbar 
A_{-i}$
where $\wbar A_i={\langle}ab^i{\rangle}$ and $\wbar A_{-0}=\wbar 
A_{\infty}$. 
We consider the projection 
$\pr_i \co E\to  \wbar A_i.$
Let $x\in H^1(B\wbar A_i;\bZ/p)=\Hom(\wbar A_i,\bZ/p)$ be the dual of 
$ab^i$. 
Then
\[\pr_i^*x(a)=x(\pr_{i}(a))=x(\pr_i(ab^iab^{-i})^{1/2})=x((ab^i)^{1/2})=1
/2,\]
\[\pr_i^*x(b)=x(\pr_{i}(ab^i(ab^{-i})^{-1})^{1/(2i)})=1/(2i).\]
Hence for $\beta(x)=y$, we have $\pr_i^*(y)=1/2y_1+1/(2i)y_2$. 
Therefore the $k+1$ elements 
$(1/2y_1+1/(2i)y_2)^k,\
i=0,\ldots,k$ form a base of $H^{2k}(E/{\langle}c{\rangle};\bZ/p)\cong 
(\bZ/p)^{k+1}$
for $k{<}p-1$.
Thus we know the number of $L(1,k)$ is $k+1$ for $0{<}k{<}p-1$.

Recall that 
\[H^{2q}(BE)\cong 
\begin{cases}(\bZ/p)^{q+1}\cong H^{2q}((q+1)L(1,q))\text{ for }0\le 
2\le p-2 \\
(\bZ/p)^{q+2}\cong H^{2p-2}((p+1)L(1,0))\text{ for }q=p-1.
\end{cases}\]
This shows $H^*(X_{q,k})\cong 0$ for $*\le 2p-2$ since so is 
$L(2,k)$. 
The number $n(G)_{q,k}$ of $X_{q,k}$ is only depend on $W_G(E)=H$.  
Hence we have the following corollary.       

\begin{cor}\label{cor:cor4.6}  
Let $G$ have the $p$--Sylow subgroup $E$ and $W_G(E)=H$.  Let
\[ BG \sim \vee  n(G)_{q,k} X_{q,k}  \vee m(G,2)_k L(2,k)\vee
m(G,1)_k  L(1,k). \]
Then $n(G)_{q,k}=\tilde n(H)_{q,k}$ and $m(G,1)_k=\rank_pH^{2k}(G)$.
\end{cor}

Let $W_G(E)=H$. We also compute the dominant summand by the 
cohomology $H^*(BE)^H\cong H^*(B(E{\co}H))$. 
Let us write the $\bZ/p$--module
\[X_{q,k}(H)= S(A)^q\otimes v^k \cap H^*(B(E{\co}H))\quad with\
\ S(A)^q=\bZ/p\{y_1^q,y_1^{q-1}y_2,\ldots,y_2^q\}.\]
Since the module $\bZ/p\{v^k\}$ is isomorphic to the $H$--module  
$\det^k$,
we have the following lemma.
\begin{lemma}  \label{lem:lem4.7} The  number  $n_{q,k}(G)$ of $X_{q,k}$ 
in  $BG$ is given by $\rank_p(X_{q,k}(W_G(E)))$.
\end{lemma}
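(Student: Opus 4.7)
The plan is to chain together the two previous corollaries and then identify the subspace $X_{q,k}(H)$ with an invariant module. Set $H=W_G(E)$.

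\textbf{Step 1 (reduce to the elementary abelian case).} By \fullref{cor:cor4.6}, the multiplicity $n(G)_{q,k}$ depends only on $H$, and in fact $n(G)_{q,k}=\tilde n(H)_{q,k}$, where $\tilde n(H)_{q,k}$ is the multiplicity of $\tilde X_{q,k}$ in the splitting of $B(A{\colon}H)$ obtained in \fullref{cor:cor4.4}. This corollary in turn, using \fullref{lem:lem4.3}, gives
\[
\tilde n(H)_{q,k}=\rank_p H^0(H;M_{q,k})=\rank_p(M_{q,k})^H,
\]
since the subgroups of $GL_2(\bF_p)$ with order prime to $p$ that arise as $W_G(E)$ fall under the hypotheses of that lemma (coprime order, solvable by the Suzuki classification recalled above \fullref{cor:cor4.4}).

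\textbf{Step 2 (identify the invariant subspace).} It remains to show $\rank_p X_{q,k}(H)=\rank_p(M_{q,k})^H$. Since $(|H|,p)=1$, transfer gives $H^*(B(E{\colon}H))\cong H^*(BE)^H$, so
\[
X_{q,k}(H)=\bZ/p\{y_1^q,y_1^{q-1}y_2,\ldots,y_2^q\}\cdot v^k\cap H^*(BE)^H.
\]
By the basis description \eqref{eqn:eqn3.2}, the $(q{+}1)$ monomials $y_1^{q-i}y_2^i v^k$ (with $0\le i\le q\le p-1$) are linearly independent in $H^*(BE)$, so $S(A)^q\otimes v^k$ is a $(q{+}1)$-dimensional $\bZ/p$--subspace. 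By \eqref{eqn:eqn3.4}, $H$ acts linearly on $y_1,y_2$ via the natural representation of $GL_2(\bF_p)$, and $g^*v=\det(g)\,v$. Hence $S(A)^q\otimes v^k$ is $H$--stable, and as an $\bF_p[H]$--module it is canonically isomorphic to $S(A)^q\otimes(\det)^k=M_{q,k}$. Therefore
\[
X_{q,k}(H)=(S(A)^q\otimes v^k)^H\cong (M_{q,k})^H,
\]
and taking ranks finishes the proof when combined with Step 1.

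\textbf{Main obstacle.} The computation is straightforward once the module identification is made; the only delicate point is verifying that the action of $H$ on the monomial subspace $S(A)^q\otimes v^k\subset H^*(BE)$ really is the one claimed, i.e.\ that the natural inclusion into $H^*(BE)$ picks out exactly the $H$--submodule $M_{q,k}$. This is where formula \eqref{eqn:eqn3.4} and the basis \eqref{eqn:eqn3.2} do all the work, and no representation-theoretic subtlety (e.g.\ no issue with composition factors versus subquotients) arises because the action on $v^k$ is by the character $\det^k$ and tensoring an irreducible by a character stays irreducible.
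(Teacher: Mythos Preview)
Your proof is correct and follows the same approach as the paper. The paper states Lemma~4.7 as an immediate consequence of the sentence preceding it (``Since the module $\bZ/p\{v^k\}$ is isomorphic to the $H$--module $\det^k$, we have the following lemma''), which is exactly your Step~2 identification $S(A)^q\otimes v^k\cong M_{q,k}$; combined with Corollaries~4.4 and~4.6 this gives the result, just as you argue.
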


Next problem is to seek $m(G,2)_k$.
The number $p+1$ for the summand $L(2,k)$ in $BE$ is given as follows.
For each $E$--conjugacy class of $A$--subgroup 
$A_i={\langle}c,ab^i{\rangle},i\in 
\bF_p\cup
\infty$, we see 
\[W_E(A_i)=N_E(A_i)/A_i=E/A_i\cong \bZ/p\{b\}\quad
b^*{\co}ab^i\mapsto ab^ic.\] 
Let $u=\left(\begin{smallmatrix}1&1 \\ 0&1 \end{smallmatrix}\right)$ in
$GL_2(\bF_p)$ and $U={\langle}u{\rangle}$ the maximal unipotent 
subgroup.
Then we can identify $W_E(A_i)\cong U$ by $b\mapsto u$.
For $y_1^sy_2^l \in
M_{q,k}$ (identifying $H^*(BA)\cong S^*(A)=\bZ/p[y_1,y_2]$), we
can compute
\begin{eqnarray*}
\wbar{W}_E(A)y_1^sy_2^l = &(1+u+\cdots+u^{p-1})y_1^sy_2^l
 & = \sum_{i=0}^{p-1}(y_1+iy_2)^sy_2^l  \\
  = &\sum_i\sum_t \tbinom{s}{t}
i^ty_1^{s-t}y_2^ty_2^l & =  \sum_t
\tbinom{s}{t}
\sum_i i^t
y_1^{s-t}y_2^{t+l} .
\end{eqnarray*}
Here $\sum_{i=0}^{p-1}i^t=0$ for $1\le t\le  p-2$, and $=-1$ for 
$t=p-1$.
Hence we know
\[\dim_p \wbar{W}_G(A_i)M_{q,k}=
\begin{cases}0  & \text{ for }1\le q \le p-2 \\
1 & \text{ for }q=p-1.
\end{cases}
\]
Thus we know that $BE$ has just one $L(2,k)$ 
for each $E$--conjugacy $A$--subgroup $A_i$.

\begin{lemma} \label{lem:lem4.8}
Let $A$ be an $F^{\ec}$--radical subgroup, ie $W_G(A)\supset 
SL_2(\bF_p)$.
Then $\wbar W_G(A)(M_{q,k})=0$ for all $k$ and $1\le q \le p-1$.
\end{lemma}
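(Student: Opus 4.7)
The plan is to factor the sum $\overline{W_G(A)}$ through $\overline{SL_2(\bF_p)}$ and then show the latter already annihilates $M_{q,k}$ for representation-theoretic reasons. Since $SL_2(\bF_p)$ is normal in $GL_2(\bF_p)$, it is normal in $W_G(A)$, and the quotient embeds into $\bF_p^{\times}$ via $\det$. Choosing a set-theoretic section $\sigma$ of the projection $\pi\co W_G(A)\to W_G(A)/SL_2(\bF_p)$ and writing every element of $W_G(A)$ uniquely as $\sigma([g])s$ with $s\in SL_2(\bF_p)$, I obtain
\[
\overline{W_G(A)}\;=\;\Bigl(\sum_{[g]}\sigma([g])\Bigr)\cdot \overline{SL_2(\bF_p)}\qquad\text{in }\bF_p[W_G(A)].
\]
So it suffices to prove $\overline{SL_2(\bF_p)}\cdot M_{q,k}=0$ for every $k$ and every $1\le q\le p-1$.

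For any finite group $H$ acting on an $\bF_p$--module $N$, left multiplication by any $h\in H$ fixes $\bar H$ in $\bF_p[H]$, hence $\bar H\cdot N\subseteq N^H$. Applied to $H=SL_2(\bF_p)$ and $N=M_{q,k}$, and noting that $\det$ restricts trivially to $SL_2(\bF_p)$, this gives
\[
\overline{SL_2(\bF_p)}\cdot M_{q,k}\;\subseteq\;M_{q,k}^{SL_2(\bF_p)}\;=\;S^q(A)^{SL_2(\bF_p)}\otimes(\det)^k.
\]
By the classical description of the simple $\bF_p[SL_2(\bF_p)]$--modules in defining characteristic, the symmetric powers $S^q(A)$ with $0\le q\le p-1$ form a complete set of irreducibles, of which only $S^0(A)$ is trivial. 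Consequently $S^q(A)^{SL_2(\bF_p)}=0$ for $1\le q\le p-1$, and the lemma follows.

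The delicate point is the vanishing of these $SL_2(\bF_p)$--invariants, and it is really the case $q=p-1$ that needs care. For $1\le q\le p-2$ the vanishing is essentially immediate from the computation carried out just before the lemma: $\overline U(y_1^sy_2^l)=-\binom{s}{p-1}y_1^{s-p+1}y_2^{l+p-1}$ is zero whenever $s\le p-2$, so $\overline U$ already annihilates $S^q(A)$ and a fortiori so does $\overline{SL_2(\bF_p)}\supseteq\overline U$. For $q=p-1$ the same formula gives $(S^{p-1}(A))^U=\bF_p\{y_2^{p-1}\}$, so one further needs an element such as $w=\bigl(\begin{smallmatrix}0&1\\-1&0\end{smallmatrix}\bigr)\in SL_2(\bF_p)$, under which $y_2^{p-1}\mapsto(-y_1)^{p-1}=y_1^{p-1}$; this cuts the $U$--fixed line down to zero and completes the remaining case without appealing to the irreducibility black box.
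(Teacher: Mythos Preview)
Your proof is correct and follows essentially the same route as the paper: both reduce to showing that $(S^q(A))^{SL_2(\bF_p)}=0$ for $1\le q\le p-1$, using the containment $\bar H\cdot M\subseteq M^H$ (which the paper leaves implicit). The only cosmetic difference is that the paper intersects the invariants of the two unipotent generators $u,u'$, whereas your hands-on paragraph uses $U$ together with the Weyl element $w$; your factorization $\overline{W_G(A)}=(\sum\sigma)\cdot\overline{SL_2(\bF_p)}$ makes explicit a step the paper suppresses.
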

\begin{proof}
The group $SL_2(\bF_p)$ is generated by $u=\left(\begin{smallmatrix}
1&1\\ 0&1 \end{smallmatrix}\right)$ and $u'=\left(\begin{smallmatrix}
1&0\\ 1&1 \end{smallmatrix}\right)$.  We know
$\Ker(1-u)\cong \bZ/p[y_1^p-y_2^{p-1}y_1,y_2]$ and 
$\Ker(1-u')\cong \bZ/p[y_2^p-y_1^{p-1}y_2,y_1]$.
Hence we get
$(\Ker(1-u)\cap \Ker(1-u'))^*\cong 0$ for $0{<}*\le p-1$.
\end{proof}
\begin{prop} \label{prop:prop4.9}
Let $G$ have the $p$--Sylow subgroup $E$.  The number of $L(2,0)$ in 
$BG$ 
is given by
\[m(G,2)_0= \sharp_G(A)-\sharp_G(F^{\ec}A)\]
where $\sharp_G(A)$(resp.$\sharp_G(F^{\ec}A)$) is 
the number of $G$--conjugacy classes
of $A$--subgroups (resp. $F^{\ec}$--radical subgroups).
\end{prop}
\begin{proof}
Let us write $K=E{\co}W_G(E)$ and $H^*(BE)^{W_G(E)}=H^*(BK)$. 
From \fullref{thm:thm3.1}, we have 
\begin{equation} \label{eqn:star}
\quad H^*(BG)\cong H^*(BK)\cap 
_{A{\co}F^{\ec}-\rad}i_A^{*-1}H^*(BA)
^{W_G(A)}.
\end{equation}
Let $A$ be an $A$--subgroup of $K$ and $x\in W_{K}(A)$.
Recall $A={\langle}c,ab^i{\rangle}$ for some $i$.
Identifying $x$ as an element of $N_G(A)\subset E{\co}\Out(E)$
We see $x{\langle}c{\rangle}={\langle}c{\rangle}$ from \eqref{eqn:eqn3.4} 
and since 
${\langle}c{\rangle}$ is the center of $E$. 
Hence
\[W_{K}(A)\subset B=U{\co}(\bF_p^*)^2 \text{ the Borel subgroup}.\]
So we easily see that $\wbar W_{K}(y_1^{p-1})=\lambda y_2^{p-1}$
for some $\lambda\not =0$ follows from $b^*y_i^{p-1}=y_i^{p-1}$ for
$b= \text{diagonal} \in (\bF_p)^{*2}$ and the arguments just before
\fullref{lem:lem4.8}.
We also see $\wbar W_K(y_1^{p-1-i}y_2^i)=0$
for $i{>}0$. 
Hence we have $m(K,2)_0=\sharp _K(A)$.
From the isomorphism \eqref{eqn:star}, we have 
$m(G,2)_0=\sharp_K(A)-\sharp_G(F^{\ec}A)$.  

On the other hand $m(G,2)_0\le \sharp_G(A)-\sharp_G(F^{\ec}A)$ 
from the above lemma.
Since $\sharp_K(A)\ge \sharp_G(A)$, we see that 
$\sharp_K(A)=\sharp_G(A)$ 
and get the proposition.
\end{proof}
\begin{cor} \label{cor:cor4.10}
Let $G$ have the $p$--Sylow subgroup $E$.  The number of $L(1,0)$ in 
$BG$ 
is given by
\[m(G,1)_{p-1}=\rank_pH^{2(p-1)}(G)= \sharp_G(A)-\sharp_G(F^{\ec}A).\]
\end{cor}

\begin{proof}
Since $L(1,0)=L(1,p-1)$ is linked to $L(2,0)$, we know
$m(G,1)_{p-1}=m(G,2)_0$.
\end{proof}

\begin{lemma} \label{lem:lem4.11}
Let $\xi\in \bF_p^*$ be a primitive $(p-1)$th root of $1$
and  $G\supset E{\colon}{\langle}\diag(\xi,\xi){\rangle}$. If 
$\xi^{3k}\not =1$, 
then $BG$ does not contain the summand $L(2,k)$, ie $m(G,2)_k=0$.
\end{lemma}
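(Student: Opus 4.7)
The plan is to invoke the Martino--Priddy multiplicity formula
$$m(G,2)_k = \sum_{[A_j]}\dim_{\bF_p}\wbar{W}_G(A_j)(M_{p-1,k}),$$
exactly as in the proof of \fullref{prop:prop4.9}, with the sum taken over $G$-conjugacy class representatives of the rank-$2$ elementary abelian subgroups of $E$; these are all of the form $A_j=\langle c,ab^j\rangle$ for $j\in\bF_p\cup\{\infty\}$. The task is to show each summand vanishes when $\xi^{3k}\neq 1$.

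For every $A_j$ we have $U=W_E(A_j)\subseteq W_G(A_j)$, and the computation of $\wbar{U}(y_1^sy_2^l)$ shown just before \fullref{lem:lem4.8} identifies the $U$-fixed subspace of $M_{p-1,k}$ as the line $\bF_p\{y_2^{p-1}\otimes\det^k\}$. Matching the paper's convention $u=\left(\begin{smallmatrix}1&1\\0&1\end{smallmatrix}\right)$ with the action of $b\in W_E(A_j)$, the abstract generator $y_2$ corresponds to the class in $H^2(BA_j)$ dual to $ab^j$ and $y_1$ to the class dual to $c$. Since $\wbar{W}_G(A_j)(M_{p-1,k})\subseteq M_{p-1,k}^{W_G(A_j)}\subseteq M_{p-1,k}^U$, it is enough to show that $y_2^{p-1}\otimes\det^k$ is not fixed by the image of $g=\diag(\xi,\xi)\in K\subseteq G$.

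The identity $a^\xi b^{j\xi}=(ab^j)^\xi c^{j\binom{\xi}{2}}$ in $E$ (an immediate consequence of $[a,b]=c$) gives $g(ab^j)=(ab^j)^\xi c^{j\binom{\xi}{2}}$ and $g(c)=c^{\xi^2}$. Dualising, $g^*y_2=\xi y_2$ (cleanly, because $y_2(c)=0$) while $g^*y_1=\xi^2 y_1+j\binom{\xi}{2}y_2$, so $g|_{A_j}\in\Out(A_j)=GL_2(\bF_p)$ has eigenvalues $\xi$ and $\xi^2$ on $H^2(BA_j)$, and thus determinant $\xi^3$. Consequently
$$g\cdot(y_2^{p-1}\otimes\det^k) = \xi^{p-1}\cdot\xi^{3k}(y_2^{p-1}\otimes\det^k) = \xi^{3k}(y_2^{p-1}\otimes\det^k),$$
and the hypothesis $\xi^{3k}\neq 1$ forces $M_{p-1,k}^{W_G(A_j)}=0$ for every $[A_j]$, so $m(G,2)_k=0$. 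The main subtlety is the determinant calculation: $\det(g|_{A_j})=\xi^3$ rather than the naively expected $\xi^2=\det g\in\Out(E)$. The extra factor of $\xi$ appears because $\langle c\rangle\subset A_j$ is scaled by $\xi^2$, whereas $\langle c\rangle=[E,E]$ is quotiented out when $\Out(E)$ acts on $E/[E,E]$; it is precisely this factor that produces the exponent $3$ in the statement.
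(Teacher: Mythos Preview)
Your proof is correct and follows essentially the same route as the paper: both identify $W_G(A_j)\supseteq U{\colon}\langle\diag(\xi^2,\xi)\rangle$ inside $\Aut(A_j)$ (with $y_1$ dual to $c$, $y_2$ dual to $ab^j$), use that $\wbar U$ lands in the line $\bF_p\{y_2^{p-1}\}$, and then observe that $\diag(\xi^2,\xi)$ acts there by $\xi^{3k}$. The only cosmetic difference is that the paper reduces to $G=E{\colon}\langle\diag(\xi,\xi)\rangle$ and computes the full norm $\wbar{W}_G(A_i)v=\bigl(\sum_{j=0}^{p-2}\xi^{3jk}\bigr)\lambda y_2^{p-1}$ directly, whereas you bound the norm image by the fixed space $M_{p-1,k}^{W_G(A_j)}$ and kill that line via the single element $g$; both arguments hinge on exactly the same determinant calculation $\det(g|_{A_j})=\xi^3$.
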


\begin{proof}
It is sufficient to prove the case 
$G=E{\colon}{\langle}\diag(\xi,\xi){\rangle}$.
Let $G=E{\colon}{\langle}\diag(\xi,\xi){\rangle}$.  Recall 
$A_i={\langle}c,ab^i{\rangle}$ and
\[ \diag(\xi,\xi)\co ab^i \mapsto (ab^i)^{\xi},\qua c\mapsto 
c^{\xi^2}.\]
So the Weyl group is 
$W_G(A_i)=U{\co}{\langle}\diag(\xi^2,\xi){\rangle}$.
For  $v=\lambda y_1^{p-1}+\cdots \in M_{q,k}$, we have
\[\wbar W_G(A_i)v=\sum_{i=0}^{p-2}(\xi^{3i})^k 
\diag(\xi^{2i},\xi^{i})(1+\cdots+u^{p-1})v=\sum_{i=0}^{p-2}\xi^{3ik}
\lambda y_2^{p-1}.\]
Thus we get the lemma from 
$\sum_{i=0}^{p-2}\xi^{3ik}=0$ for $3k\not =0 \mod(p-1)$ and $=-1$ 
otherwise.
\end{proof}

\section[Cohomology and splitting of B(Z/3)2]
{Cohomology and splitting of $B(\bZ/3)^2$}
\label{sec:sec5}

In this section, we study the cohomology and stable splitting
of $BG$ for $G$ having a $3$--Sylow subgroup $(\bZ/3)^2=A$. 
In this and next sections, $p$ always means $3$.
Recall $\Out(A)\cong GL_2(\bF_3)$ and $\Out(A)'$ consists the
semidihedral group
\[SD_{16}={\langle}x,y|x^8=y^2=1,yxy^{-1}=x^3{\rangle}.\]
Every $3$--local finite group $G$ over $A$ is of type 
$A{\co}W,\ W\subset SD_{16}$.  There is the $SD_{16}$--conjugacy 
classes of 
subgroups(here $B\longleftarrow C$ means $B\supset C$)
\[SD_{16} \begin{cases}
\longleftarrow Q_8\longleftarrow \bZ/4\\
\longleftarrow \bZ/8 \longleftarrow \bZ/4 \longleftarrow 
\bZ/2 \longleftarrow 0\\
\longleftarrow D_8 \longleftarrow \bZ/2\oplus \bZ/2 \longleftarrow 
\bZ/2
\end{cases}
\]
We can take  generators  of subgroups in $GL_2(\bF_3)$
by the matrices
\begin{align*}
\bZ/8={\langle}l{\rangle}, Q_8={\langle}w,k{\rangle}, 
D_8={\langle}w',k{\rangle}, \bZ/4={\langle}w{\rangle}, \\
\bZ/4={\langle}k{\rangle}, \bZ/2\oplus 
\bZ/2={\langle}w',m{\rangle},
\bZ/2={\langle}m{\rangle}, \bZ/2={\langle}w'{\rangle},
\end{align*}
where $l=\left(\begin{smallmatrix} 0&1\\ 1&-1\end{smallmatrix}\right)$, 
$w=\left(\begin{smallmatrix} 0&1\\ -1&0 \end{smallmatrix}\right)$,
$k=l^2=\left(\begin{smallmatrix} 1&-1\\ -1&-1\end{smallmatrix}\right)$, 
$w'=wl=\left(\begin{smallmatrix} 1&-1\\ 0&-1\end{smallmatrix}\right)$ and
$m=w^2=k^2=\left(\begin{smallmatrix} -1&0\\ 0&-1\end{smallmatrix}\right)$.
Here we note that $k$ and $w$ are $GL_2(\bF_3)$--conjugate,
in fact $uku^{-1}=w$. Hence we note that
\[H^*(B(A{\co}{\langle}k{\rangle}))\cong 
H^*(B(A{\co}{\langle}w{\rangle})).\]
The cohomology of $A$ is given $H^*(BA)\cong \bZ/3[y_1,y_2]$,
and the following
are immediately
\[H^*(BA)^{{\langle}m{\rangle}}\cong \bZ/3[y_1^2,y_2^2]\{1,y_1y_2\} 
\qua 
H^*(BA)^{{\langle}w'{\rangle}}
\cong \bZ/3[y_1+y_2,y_2^2].
\]
Let us write $Y_i=y_i^2$ and $t=y_1y_2$.   The $k$--action is given
$Y_1\mapsto Y_1+Y_2+t$, $Y_2\mapsto Y_1+Y_2-t,$ $t\mapsto -Y_1+Y_2.$
So the following are invariant
\[a=-Y_1+Y_2+t,\ a_1=Y_1(Y_1+Y_2+t),\ a_2=Y_2(Y_1+Y_2-t),\ 
b=t(Y_1-Y_2).\]
Here we note that $a^2=a_1+a_2$ and $b^2=a_1a_2.$ 
We can prove the invariant ring is
\[H^*(BA)^{{\langle}k{\rangle}}\cong \bZ/3[a_1,a_2]\{1,a,b,ab\}.\]
Next consider the invariant under $Q_8={\langle}w,k{\rangle}$.  The 
action for $w$ is
$a\mapsto -a,\ a_1\leftrightarrow a_2,\ b\mapsto b$.  Hence we get
\[H^*(BA)^{Q_8}\cong \bZ/3[a_1+a_2,a_1a_2]\{1,b\}\{1,(a_1-a_2)a\}.\]
Let us write $S=\bZ/3[a_1+a_2,a_1a_2]$ and $a'=(a_1-a_2)a$.
The action for $l$ is given
$l \co Y_1\mapsto Y_2\mapsto Y_1+Y_2+t \mapsto Y_1+Y_2-t \mapsto 
Y_1.$
Hence $l \co a\mapsto -a,\ a_1\leftrightarrow a_2,\ b\mapsto -b$.  
Therefore
we get 
$H^*(BA)^{{\langle}l{\rangle}}\cong S\{1,a',ab,(a_1-a_2)b\}.$

The action for $w' \co Y_1\mapsto Y_1+Y_2+t,\ Y_2\mapsto Y_2$, 
implies that
$w' \co  a\mapsto a,\ a_i\mapsto a_i,\ b\mapsto -b$.  Then we can 
see
\[H^*(BA)^{D_8}=H^*(BA)^{{\langle}k,w'{\rangle}}\cong 
\bZ/3[a_1,a_2]\{1,a\}
\cong S\{1,a,a_1,a'\}.\]
We also have
\[H^*(BA)^{SD_{16}}\cong H^*(BA)^{Q_8}\cap H^*(BA)^{\bZ/8}
\cong S\{1,a'\}.\]
Recall the Dickson algebra $DA=\bZ/3[\tilde D_1,\tilde D_2]\cong 
H^*(BA)^{GL_2(\bF_3)}$ where 
$\tilde D_1=Y_1^3+Y_1^2Y_2+Y_1Y_2^2+Y_2^3=(a_2-a_1)a=a'$ and
$\tilde D_2=(y_1^3y_2-y_1y_2^3)^2=a_1a_2$.
Using $a^2=(a_1+a_2)$ and $\tilde D_1^2=a^6-a_1a_2a^2$, we can write
\[H^*(BA)^{SD_{16}}\cong \bZ/3[a^2,\tilde D_2]\{1,\tilde D_1\}
\cong DA\{1,a^2,a^4\}.\]   

\begin{thm} \label{thm:thm5.1} Let $G=(\bZ/3)^2{\co}H$ for $H\subset SD_{16}$.
Then $BG$ has the stable  splitting given by 
\begin{footnotesize}
\[\stackrel{\tilde X_{0,0}}{\gets} SD_{16} \begin{cases}
\stackrel{\tilde X_{0,1}}{\longleftarrow}
Q_8\\
\quad    \\
\stackrel{\tilde X_{2,1}}{\longleftarrow} \bZ/8 
\stackrel{\tilde X_{2,0}\vee \tilde X_{0,1}\vee L(1,0)}
{\longleftarrow}
\bZ/4 \stackrel{2\tilde X_{2,0}\vee 2\tilde X_{2,1}
\vee 2L(1,0)}{\longleftarrow}
\bZ/2 \stackrel{2\tilde X_{1,0}\vee 2\tilde X_{1,1}
\vee 2L(1,1)}{\longleftarrow} 0\\             \quad    \\
\stackrel{\tilde X_{2,0}\vee L(1,0)}{\longleftarrow} D_8 
\stackrel{\tilde X_{2,0}\vee \tilde X_{2,1}\vee L(1,0)}{
\longleftarrow}
\bZ/2\oplus \bZ/2 \stackrel{\tilde X_{1,0}
\vee \tilde X_{1,1}\vee L(1,1)}{\longleftarrow} 
\bZ/2
\end{cases}
\]
\end{footnotesize}
where 
$\stackrel{\tilde X_1}{\gets} \cdots \stackrel{\tilde X_s}{\gets}H$ 
means
$B((\bZ/3)^2{\colon}H)\sim \tilde X_1\vee \cdots \vee \tilde X_s$.
\end{thm}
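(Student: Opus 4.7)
My plan is to apply Corollary 4.4 uniformly to every $H \subset SD_{16}$ appearing in the diagram, reading the multiplicities off the invariant rings $H^*(BA)^H$ that have already been computed in the preceding paragraphs of this section.

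First I would record, for each $H$, the $\bF_3$--ranks of $H^{2q}(BA)^H$ for $q = 0, 1, 2$. By Corollary 4.4 these ranks equal the multiplicities of $\tilde X_{q,0}$, and, for $q=1$, also the multiplicity of $L(1,1)$, while for $q=2$ they give the multiplicity of $L(1,0) = L(1,p-1)$. These ranks are immediate from the explicit descriptions already derived for $H^*(BA)^{Q_8}$, $H^*(BA)^{\langle l\rangle}$, $H^*(BA)^{\langle k\rangle}$, $H^*(BA)^{D_8}$, $H^*(BA)^{SD_{16}}$, $H^*(BA)^{\langle m\rangle}$, $H^*(BA)^{\langle w'\rangle}$, together with the trivial case $H=1$ where $H^*(BA) = \bF_3[y_1,y_2]$.

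Second, I would compute the twisted invariants $M_{q,1}^H = (S^q(A)\otimes \det)^H$ for $q=0,1,2$. When $H\subset Q_8 = SD_{16}\cap SL_2(\bF_3)$, the determinant is trivial on $H$, so $M_{q,1}^H = S^q(A)^H$ and the multiplicity of $\tilde X_{q,1}$ agrees with that of $\tilde X_{q,0}$. When $H\not\subset Q_8$, choose $h\in H$ of determinant $-1$; then
\[
M_{q,1}^H = \{\, v\in S^q(A)^{H\cap Q_8} : h\cdot v = -v\,\},
\]
a linear algebra problem on a space of dimension $\le 3$. For example, for $H=\langle l\rangle$ a direct computation with $l(y_1)=y_2$, $l(y_2)=y_1-y_2$ gives $l\cdot a = -a$ on the one-dimensional space $S^2(A)^{\langle k\rangle} = \bF_3\{a\}$, yielding the unique summand $\tilde X_{2,1}$ in $B(A\rtimes \bZ/8)$; all other cases are similar.

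Third, I would tabulate the multiplicities for each $H$ and check that the successive differences along each chain match the arrow labels in the statement. As a final consistency check the sum of the multiplicities along any complete chain ending at $H=1$ should recover the Harris--Kuhn splitting of $BA$ given in Theorem 4.2. The main obstacle is bookkeeping rather than mathematical: there are eight subgroups and six modules $M_{q,k}$ to handle, but each individual verification is reduced either to reading a rank off an already computed invariant ring or to checking the sign of one matrix on a basis of $S^q(A)$ with $q\le 2$, so no genuine difficulty arises.
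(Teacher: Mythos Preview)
Your proposal is correct and follows essentially the same approach as the paper: both apply Corollary~4.4 to read off the multiplicities $\tilde n(H)_{q,k}$ and $\tilde m(H)_{q'}$ directly from the invariant rings $H^*(BA)^H$ already computed, using that $\tilde n(H)_{q,0}=\rank_p H^{2q}(BA)^H$ and that when $H\subset Q_8$ (so $\det|_H=1$) the twisted and untwisted multiplicities coincide, while for $H\not\subset Q_8$ one checks the sign of a determinant~$-1$ element on the low-degree invariants (the paper does this with $l\colon a\mapsto -a$, $w\colon a\mapsto -a$, exactly as you describe). The paper presents this as a few worked examples rather than a uniform tabulation, but the content is the same.
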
 
For example
\[B(E{\co}SD_{16})\sim \tilde X_{0,0}, \qua  B(E{\co}Q_8)\sim 
\tilde X_{0,0}\vee 
\tilde X_{0,1}, \qua B(E{\co}\bZ/8)\sim \tilde X_{0,0}\vee \tilde 
X_{2,1}.\]
Main parts of the above splittings are given by the author in
\cite[(6)]{Y2} by direct computations of $\wbar W_G(A)$ (see
\cite[page 149]{Y2}).  However we get the theorem more easily
by using cohomology here.  For example, let us consider the case
$G=A{\co}{\langle}k{\rangle}$.  The cohomology
\[H^0(BG)\cong \bZ/3, \qua H^2(BG)\cong 0,H^4(BG)\cong \bZ/3\]
implies that $BG$ contains just one $\tilde X_{0,0},\tilde 
X_{2,0},L(1,0)$ but
does not $\tilde X_{1,0},L(1,1)$. Since $\det(k)=1$, we also know that
$\tilde X_{0,1},\tilde X_{2,1}$ are contained.  So we can see
\[B(A{\colon}\bZ/4)\sim \tilde X_{0,0}\vee \tilde X_{0,1}\vee \tilde 
X_{2,0}
\vee \tilde X_{2,1} \vee L(1,0).\]
Next consider the case $G'=A \colon {\langle}l{\rangle}$. The fact 
$H^4(G)\cong 0$ 
implies that $BG'$ does not contain $\tilde X_{2,0},L(1,0)$.  The 
determinant $\det(l)=-1$,
and $l \co a\mapsto -a$ shows that $BG'$ contains $\tilde X_{2,1}$ 
but does not contain
$\tilde X_{0,1}$.  Hence we know $BG'\sim \tilde X_{0,0} \vee \tilde 
X_{2,1}$.
Moreover we know $BA \colon SD_{16}\sim \tilde X_{0,0}$ since  
$w \co a\to -a$ but 
$\det(w)=1$.  Thus we have the graph
\[ \stackrel{\tilde X_{0,0}}{\gets} SD_{16} 
\stackrel{\tilde X_{2,1}}{\longleftarrow} \bZ/8 
\stackrel{\tilde X_{2,0}\vee \tilde X_{0,1}\vee L(1,0)}
{\longleftarrow}
\bZ/4 .\] 
Similarly we get the other parts of the above graph.  

\begin{cor} \label{cor:cor5.2}
Let $S=\bZ/3[a_1+a_2,a_1a_2]$. Then we have the isomorphisms
\begin{align*}
H^*(\tilde X_{0,0})& \cong S\{1,\tilde D_1\} \\
H^*(\tilde X_{0,1})& \cong S\{b,\tilde D_1b\} \\   
H^*(\tilde X_{2,1})& \cong S\{ab,(a_1-a_2)b\} \\  
H^*(\tilde X_{2,0}\vee L(1,0))& \cong S\{a,a_1-a_2\}\cong 
DA\{a,a^2,a^3\}.
\end{align*}
\end{cor}
Here we write down the decomposition of cohomology for a typical case
\begin{align*}
H^*(BA)^{{\langle}k{\rangle}} & \cong S\{1,a_1-a_2\}\{1,a\}\{1,b\} \\
 & \cong S\{1,a(a_1-a_2), b,ba(a_1-a_2), ab,(a_1-a_2)b, a,(a_1-a_2)\} \\
 & \cong H^*(\tilde X_{0,0})\oplus H^*(\tilde X_{0,1})
\oplus H^*(\tilde X_{2,1})\oplus H^*(\tilde X_{2,0}\vee L(1,0)).
\end{align*}

\section[Cohomology and splitting of B3(1+2)]
{Cohomology and splitting of $B3_+^{1+2}$.}
\label{sec:sec6}

In this section we study the cohomology and stable splitting of $BG$ 
for
$G$ having a $3$--Sylow subgroup $E=3_+^{1+2}$.
In the splitting for $BE$, the summands $X_{q,k}$ are called 
dominant summands.  Moreover the summands $L(2,0)\vee L(1,0)$ is
usually written by $M(2)$.
\begin{lemma} \label{lem:lem6.1}
If $G\supset E{\co}{\langle}\diag(-1,-1){\rangle}$ identifying 
$\Out(E)\cong 
GL_2(\bF_3)$ and
$G$ has $E$ as a $3$--Sylow subgroup, then
\[BG\sim (\text{dominant summands})\vee (\sharp_G(A)-\sharp_G(F^{\ec}A)
(M(2)).\]
\end{lemma}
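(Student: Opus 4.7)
The strategy is to show that when $\diag(-1,-1)$ is in $W_G(E)$, the only non-dominant summands appearing in $BG$ are copies of $L(2,0)\vee L(1,0) = M(2)$, and that they come in the right multiplicity. Using \fullref{thm:thm4.5}, $BE$ decomposes into the dominant summands $X_{q,k}$ plus the pieces $L(2,k)$ and $L(1,k)$, and since $p=3$ we only need to consider $k\in\{0,1\}$ and $q\in\{0,1,2\}$.

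First I would pin down the $L(2,k)$ multiplicities. For $k=0$, \fullref{prop:prop4.9} already gives $m(G,2)_0=\sharp_G(A)-\sharp_G(F^{\ec}A)$. For $k=1$, I would apply \fullref{lem:lem4.11} with $\xi=-1$, the primitive $(p-1)=2$nd root of unity: since $\xi^{3k}=(-1)^{k}$, we have $\xi^{3\cdot 1}=-1\neq 1$, and the lemma yields $m(G,2)_1=0$.

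Next I would handle the $L(1,k)$ summands. By \fullref{cor:cor4.6}, $m(G,1)_k=\rank_p H^{2k}(BG)$. For $p=3$ the non-trivial case outside $k=p-1$ is $k=1$: the element $\diag(-1,-1)\in W_G(E)$ acts on $H^{2}(BE)\cong\bZ/3\{y_1,y_2\}$ by $y_i\mapsto -y_i$ (from \eqref{eqn:eqn3.4}), so $H^{2}(BE)^{W_G(E)}=0$, forcing $H^{2}(BG)=0$ and $m(G,1)_1=0$. For $k=p-1=2$, \fullref{cor:cor4.10} gives $m(G,1)_{2}=\sharp_G(A)-\sharp_G(F^{\ec}A)$, matching the $L(2,0)$ count.

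Combining these tallies, the non-dominant part of $BG$ is exactly $(\sharp_G(A)-\sharp_G(F^{\ec}A))\bigl(L(2,0)\vee L(1,0)\bigr)=(\sharp_G(A)-\sharp_G(F^{\ec}A))M(2)$, which is the claim. The main obstacle, though minor, is simply keeping straight the two different indexings $(L(2,k)$ uses $0\le k\le p-2$, while the extra $L(1,0)=L(1,p-1))$ and confirming that no stray $L(1,k)$ survives for the intermediate $k$; the vanishing of $H^{2}(BG)$ from the central involution $\diag(-1,-1)$ is the key mechanism that removes them.
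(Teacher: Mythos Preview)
Your proof is correct and follows essentially the same route as the paper: invoke \fullref{lem:lem4.11} (with $\xi=-1$) to kill $L(2,1)$, use the action of $\diag(-1,-1)$ on $H^2(BE)$ to kill $L(1,1)$, and then read off the $M(2)$ multiplicity from \fullref{prop:prop4.9} and \fullref{cor:cor4.10}. The paper's proof is just a terser version of exactly this argument.
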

\begin{proof}
From \fullref{lem:lem4.11}, 
we know $m(G,2)_1=0$ ie $L(2,1)$ is not 
contained.
The summand $L(1,1)$ is also not contained, since 
$H^2(BE)^{{\langle}\diag(-1,-1){\rangle}}\cong 0$.
The lemma is almost immediately from \fullref{prop:prop4.9}
and \fullref{cor:cor4.10}.
\end{proof}      
\begin{thm} \label{thm:thm6.2}
If $G$ has a $3$--Sylow subgroup $E$, then $BG$ is homotopic to the 
classifying space 
of one of the following groups.
Moreover the stable splitting is given by the graph so that
$\stackrel{X_1}{\gets}\cdots\stackrel{X_s}{\gets}G$ 
means $BG\sim X_1\vee\cdots\vee X_i$ and $EH=E \colon H$ for $H\subset 
SD_{16}$
\begin{footnotesize}
\[\stackrel{X_{0,0}}{\gets}
J_4 \begin{cases}\stackrel{M(2)}{\gets}
ESD_{16} \begin{cases}
\stackrel{X_{0,1}}{\gets}
EQ_8\\
\quad \\
\stackrel{X_{2,1}}{\gets} E\bZ/8 
\stackrel{\stackrel{X_{2,0}\vee X_{0,1}}\vee M(2)} {\gets}
E\bZ/4 \stackrel{\stackrel{2X_{2,0}\vee 2X_{2,1}}{\vee 
2M(2)}}{\gets}
E\bZ/2 \stackrel{\stackrel{2X_{1,0}\vee 2X_{1,1}\vee}{ 
4L(2,1)\vee 2L(1,1)}}{\gets}
E\\
\stackrel{X_{2,0}\vee M(2)}{\gets}  ED_8 
\ \stackrel{X_{2,0}\vee X_{2,1}\vee M(2)}{
\gets}
\ E(\bZ/2)^2 
\stackrel{\stackrel{X_{1,0}\vee X_{1,1}\vee}{2L(2,1)\vee L(1,1)}}
{\gets} 
E\bZ/2
\end{cases}\\
\quad \\
\stackrel{X_{2,0}}{\gets}\  ^2F_4(2)' 
\stackrel{M(2)}{\gets} M_{24}
\stackrel{X_{2,0}\vee X_{2,1}}{\gets} M_{12}
\stackrel{M(2)}{\gets} \bF_3^2{\co}GL_2(\bF_3)
\stackrel{\stackrel{X_{1,0}\vee X_{1,1}\vee}{L(2,1)\vee L(1,1)}}
{\gets} \bF_3^2{\co}SL_2(\bF_3)
\end{cases}\]
\end{footnotesize}
\end{thm}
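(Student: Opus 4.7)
The plan is to combine the Ruiz--Viruel classification (\fullref{thm:thm2.1}) with the splitting machinery of \fullref{sec:sec4} and the invariant-theoretic computations of \fullref{sec:sec5}. First I would enumerate all $3$--local finite groups over $E$: by \fullref{thm:thm2.1} these are the extensions $E\colon W$ for $W\subset SD_{16}$ with $(|W|,3)=1$, the extensions $3^2\colon SL_2(\bF_3).r$ for $r=1,2$, the groups $SL_3(\bF_3)\colon H$ with $H=\bZ/2,\bZ/3,S_3$, and the two sporadic groups $\,^2\!F_4(2)'$ and $J_4$. Since no exotic systems occur at $p=3$, the homotopy equivalence statement of the theorem reduces to matching each Ruiz--Viruel fusion system with one of the named groups in the displayed graph by comparing $W_G(E)\subset \Out(E)\cong GL_2(\bF_3)$ and the Ruiz--Viruel data against the known $3$--local structure of the sporadic or almost-simple labels.

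Next, for each such $G$ I would extract three invariants: $W_G(E)$, the number $\sharp_G(A)$ of $G$--conjugacy classes of rank-two elementary abelian subgroups, and the number $\sharp_G(F^{\ec}A)$ of those which are $F^{\ec}$--radical. All three are essentially determined by the position of $G$ in the graph: each upward edge either enlarges $W_G(E)$ or fuses $A$--classes, sometimes promoting one to $F^{\ec}$--radical status.

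With these invariants in hand, every summand count is read off from \fullref{sec:sec4}. The multiplicity $n(G)_{q,k}$ of the dominant summand $X_{q,k}$ equals $\rank_p X_{q,k}(W_G(E))$ by \fullref{lem:lem4.7}; since $W_G(E)$ acts on $H^*(BE)$ with $v$ transforming as $\det$ by \eqref{eqn:eqn3.4}, these ranks follow from the $SD_{16}$--invariant computations of \fullref{sec:sec5} after keeping track of the $v^k$ factor. The $M(2)=L(2,0)\vee L(1,0)$ multiplicity is $\sharp_G(A)-\sharp_G(F^{\ec}A)$ by \fullref{prop:prop4.9} and \fullref{cor:cor4.10}. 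For every $G$ containing $E\colon\langle \diag(-1,-1)\rangle$, \fullref{lem:lem6.1} eliminates $L(2,1)$ and $L(1,1)$; for the remaining cases $E\bZ/2$, $E$, $E(\bZ/2)^2$ and $\bF_3^2\colon SL_2(\bF_3)$, the $L(2,1)$ multiplicity is obtained from a direct $\wbar W_G(A_i)$ computation along the lines of \fullref{lem:lem4.11}, while $L(1,1)$ has multiplicity $\rank_p H^2(BG)$ by \fullref{cor:cor4.6}.

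The main obstacle is the bookkeeping of $\sharp_G(A)$ and $\sharp_G(F^{\ec}A)$ as one climbs the graph: each incremental fusion inclusion merges certain $E$--classes of $A$--subgroups into a single $G$--class, and one must simultaneously track which classes newly acquire $SL_2(\bF_3)$ in their automiser. Once these counts are correctly organised, the successive differences along each edge produce exactly the labelled $M(2)$ summands, and combined with the dominant-summand and residual $L(2,1)/L(1,1)$ calculations this yields the entire displayed splitting.
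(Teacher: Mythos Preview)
Your approach is essentially that of the paper: invoke \fullref{thm:thm2.1} for the list of fusion systems, read off dominant summands from $W_G(E)$ via \fullref{lem:lem4.7} and the invariant theory of \fullref{sec:sec5}, count $M(2)$'s by \fullref{prop:prop4.9}--\fullref{cor:cor4.10}, kill $L(2,1)$ and $L(1,1)$ via \fullref{lem:lem6.1} wherever $\diag(-1,-1)\in W_G(E)$, and handle the residual cases by a direct $\wbar W_G(A_i)M_{p-1,k}$ computation.

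One small correction to your bookkeeping: your list of ``remaining cases'' is off. Since $(\bZ/2)^2=\langle w',m\rangle$ with $m=\diag(-1,-1)$, the group $E(\bZ/2)^2$ \emph{does} contain $E{\colon}\langle\diag(-1,-1)\rangle$, so \fullref{lem:lem6.1} already disposes of it (and indeed its splitting in the graph has no $L(2,1)$ or $L(1,1)$). The genuine exceptions are exactly $E$, $E{\colon}\langle w'\rangle$ (the second of the two $E\bZ/2$'s in the graph, the one below $E(\bZ/2)^2$), and $\bF_3^2{\colon}SL_2(\bF_3)$. For those last two the paper computes $W_G(A_i)$ explicitly for each $G$--class of $A$--subgroups and evaluates $\dim\wbar W_G(A_i)M_{2,k}$ for $k=0,1$, obtaining the $L(2,0)$ and $L(2,1)$ counts directly; this is precisely what your final paragraph proposes, so once the exception list is corrected your plan goes through.
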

\begin{proof}
All groups except for $E$,$E{\co}{\langle}w'{\rangle}$ and 
$\bF_3^2{\co}SL_2(\bF_3)$
contain
$E{\co}{\langle}\diag(-1,-1){\rangle}$.  Hence we get the theorem 
from \fullref{cor:cor4.4}, \fullref{thm:thm5.1} and \fullref{lem:lem6.1}, 
except for the place for 
$H^*(BE{\co}{\langle}w'{\rangle})$
and $H^*(\bF_3^2{\co}SL_2(\bF_3))$.

Let $G=E{\co}{\langle}w'{\rangle}$. Note $w'{\co}y_1 \mapsto 
y_1-y_2,
y_2\mapsto -y_2,
v\mapsto -v$.  Hence $H^2(G)\cong \bZ/3\{y_1+y_2\}$. So $BG$ contains 
one 
$L(1,1)$.  Next consider the number of $L(2,0)$, $L(2,1)$.
The $G$--conjugacy classes of $A$--subgroups are $A_0,A_2,A_1\sim 
A_{\infty}$.  The Weyl groups are
\[W_G(A_{\infty})\cong U,\qua W_G(A_2)\cong 
U{\colon}{\langle}\diag(-1,-1){\rangle},
\qua W_G(A_{0})\cong U{\colon}{\langle}\diag(-1,1){\rangle},\]
eg $N_G(A_0)/A_0$ is generated by 
$b,w'$ which is represented by
$u,\diag(-1,1)$ respectively.
By the arguments similar to the proof of \fullref{lem:lem4.11}, 
we have that
\[\begin{cases} \dim(\wbar W_G(A_i)M_{2,0})=1\text{ for all }i\\
\dim(\wbar W_G(A_i)M_{2,1})=1,1,0\text{ for }i=\infty,2,0
\text{ respectively}.
\end{cases}\]
Thus we show $BG\supset 3L(2,0)\vee 2L(2,1)$
and we get the graph for $G=E{\colon}{\langle}w'{\rangle}$.

For the place $G=\bF_3^2{\co}SL_2(\bF_3)$, we see 
$W_G(A_{\infty})\cong SL_2(\bF_3)$.  We also have 
\[\begin{cases} \dim(\wbar W_G(A_i)M_{2,0})=0,1,1 
\text{ for }i=\infty,2,0
\text{ respectively} \\
\dim(\wbar W_G(A_i)M_{2,1})=0,1,0\text{ for } i=\infty,2,0
\text{ respectively}.
\end{cases}\]  
Thus we can see the graph for the place 
$H^*(\bF_3^2{\co}SL_2(\bF_3))$.   
\end{proof}
\begin{rem} From Tezuka--Yagita \cite{T-Y}, Yagita \cite{Y1} and
\fullref{thm:thm2.1}, we have the following 
homotopy equivalences (localized at $3$).
\[BJ_4\cong BRu,\quad BM_{24}\cong BHe, 
\qua BM_{12}\cong BGL_3(\bF_3)\] 
\[ B(E{\co}SD_{16})\cong BG_2(2)\cong BG_2(4),
\qua B(E{\co}D_8)\cong BHJ\cong BU_3(3).\]
\end{rem}

We write down the cohomologies explicitly (see also Tezuka--Yagita
\cite{T-Y} and Yagita \cite{Y2}).  First we compute $H^*(B(E{\co}H))$.
The following cohomologies are easily computed
\begin{align*}
H^*(BE)^{{\langle}m{\rangle}} &\cong 
\bZ/3[C,v]\{1,y_1y_2,Y_1,Y_2\},\qua
H^*(BE)^{{\langle}w{\rangle}}\cong \bZ/3[C,v]\{1,Y_1+Y_2\}.\\
H^*(BE)^{{\langle}k{\rangle}} & \cong \bZ/3[C,v]\{1,a\}
\text{ where }a=-Y_1+Y_2+y_1y_2,\ C^2=a^2.
\end{align*}
Recall that $V=v^{p-1}$ and $C$ multiplicatively generate 
$H^*(BE)^{\Out(E)}$.
Let us write 
\[CA=\bZ/p[C,V]\cong H^*(BE)^{\Out(E)}.\]
Then we have
\begin{align*}
H^*(BE)^{{\langle}w'{\rangle}} & \cong 
CA\{1,y_1',Y_1',Y_2,Y_2y_1',y_2v,y_1'y_2v,Y_1'y_2v\}
\text{ with }y_1'=y_1+y_2 \\
H^*(BE)^{{\langle}w',m{\rangle}} & \cong CA\{1,a,a',Y_2\}\text{ where } 
a'=(t+Y_2)v=y_1'y_2v.
\end{align*}
We can compute 
\begin{align*}
H^*(BE)^{Q_8} & \cong H^*(BE)^{{\langle}k{\rangle}}\cap 
H^*(BE)^{{\langle}w{\rangle}}
\cong \bZ/3[C,v]\cong CA\{1,v\},\\
H^*(BE)^{D_8}& \cong CA\{1,a\},\qua
H^*(BE)^{{\langle}l{\rangle}}\cong CA\{1,av\}.
\end{align*}
Hence we have $H^*(BE)^{SD_{16}}\cong CA.$

Let $D_1=C^p+V$ and $D_2=CV$.  Then it is known that
\[D_1|A_i=\tilde D_1, \ D_2|A_i=\tilde D_2\text{ for all }
i\in \bF_p\cup 
\infty.\]
So we also write $DA\cong\bZ/p[D_1,D_2]$.   Since $CD_1-D_2=C^{p+1}$,
we can write
$CA\cong DA\{1,C,C^2,\ldots,C^p\}.$

Now return to the case  $p=3$ and we get (see \cite{T-Y})
\[H^*(BJ_4)\cong H^*(BE)^{SD_{16}}\cap 
i_0^{*-1}H^*(BA_0)^{GL_2(\bF_3)}
\cong DA.\]
\begin{prop} \label{prop:prop6.3} 
There are isomorphisms for $|a''|=4$,
\[H^*(^2F_4(2)')\cong DA\{1,(D_1-C^3)a''\},\qua 
H^*(M_{24})\cong DA\oplus CA\{a''\}.\]
\end{prop}
\begin{proof}
Let $G=M_{24}$.  Then $G$ has just two $G$--conjugacy classes of 
$A$--subgroups
\[\{A_0, A_2\}, \qua  \{A_1, A_{\infty}\}.\]
It is known that one is $F^{\ec}$--radical and  the other is not.
Suppose that $A_0$ is $F^{\ec}$--radical.  Then $W_G(A_0)\cong 
GL_2(\bF_3)$.
Let $a''=a+C$.  Then
\[a''|A_0=(-Y_1+Y_2+y_1y_2+C)|A_0=0,\qua a''|A_{\infty}=-Y.\]
By \fullref{thm:thm3.1} 
\[H^*(BM_{24})\cong H^*(BE)^{D_8}\cap 
i_{A_{0}}^{*-1}H^*(BA_0)^{W_G(A_0)},\]
we get the isomorphism for $M_{24}$.
When $A_{\infty}$ is a $F^{\ec}$--radical, we take $a''=a-c$. Then we 
get the same result. 

For $G=^2F_4(2)'$, the both conjugacy classes are $F^{\ec}$--subgroups
and $W_G(A_{\infty})\cong GL_2(\bF_3)$.  Hence (for case $a''=a+C$)
\[H^*(B^2F_4(2)')\cong H^*(BM_{24})
\cap i_{A_{\infty}}^{*-1}H^*(BA_{\infty})^{GL_2(\bF_3)}.\]
We know 
\[(D_1-C^3)a''|A_0=0, \qua (D_1-C^3)a''|A_{\infty}=-VY=-\tilde D_2. 
\]
Thus we get the cohomology of $^2F_4(2)'$.
\end{proof}

\begin{rem} In \cite{T-Y,Y2}, we take
\[(\bZ/2)^2={\langle}\diag(\pm 1,\pm 1){\rangle},\quad 
D_8={\langle}\diag(\pm 1,\pm 1),
w{\rangle}. \] 
For this case, the $M_{24}$--conjugacy classes
of $A$--subgroups are $A_0\sim A_{\infty},\ A_1\sim A_2$ , and we can 
take
$a''=C-Y_1-Y_2$.  The expressions of 
$H^*(M_{12})$,
$H^*(A{\colon}GL_2(\bF_3))$ 
become  more simple (see \cite{T-Y,Y2}), in 
fact,
\[H^*(B^2F_4(2)')\cong DA\{1,(Y_1+Y_2)V\}.\]
\end{rem}

\begin{rem} \cite[Corollary 6.3]{T-Y}
and \cite[Corollary 3.7]{Y2} were not correct.  This followed from
an error in \cite[Theorem 6.1]{T-Y}. This theorem is only correct with  
adding the assumption that
there are exactly two $G$ conjugacy classes of $A$--subgroups such that
one is $p$--pure and the other is not. This assumption is always 
satisfied for
sporadic simple groups but not for $ ^2F_4(2)'$.
\end{rem}

\begin{cor} \label{cor:cor6.4}
There are isomorphisms of cohomologies
\begin{align*}
H^*(X_{2,0})& \cong DA\{D_2\},\qua 
H^*(X_{2,1})\cong CA\{av\} 
\text{ where }
(av)^2=CD_2\\
H^*(X_{0,1})& \cong CA\{v\}, \qua 
H^*(M(2))\cong DA\{C,C^2,C^3\}\text{ where }C^4=CD_1-D_2.
\end{align*}
\end{cor}
Here we write down typical examples.  First recall
\begin{align*}
CA &\cong DA\{1,C,C^2,C^3\}\cong H^*(X_{0,0})\oplus H^*(M(2)) \\
CA\{C\} & \cong DA\{C,C^2,C^3,\ D_2\}\cong H^*(M(2))\oplus 
H^*(X_{2,0}).
\end{align*}
Thus the decomposition for $H^*(BE)^{D_8}$  gives the isomorphisms
\[ CA\{1,a''\}\cong CA\{1,C\}\cong 
H^*(X_{0,0})\oplus H^*(M(2))\oplus H^*(X_{2,0})\oplus H^*(M(2)).\]
Similarly the decomposition for $H^*(BE)^{{\langle}k{\rangle}}$ gives 
the isomorphism
\[ CA\{1,a, v, av\}\cong H^*(BE)^{D_8}\oplus H^*(X_{0,1})
\oplus H^*(X_{2,1}).\]
We recall here \fullref{lem:lem4.7} and the module
\[X_{q,k}({\langle}k{\rangle})=S(V)^q\otimes v^k \cap 
H^*(B(E{\co}{\langle}k{\rangle}).\]
Then it is easily seen that
\[X_{0,0}({\langle}k{\rangle})=\{1\},  
X_{2,0}({\langle}k{\rangle})=\{a\},  X_{0,1}
({\langle}k{\rangle})=\{v\},
X_{2,1}({\langle}k{\rangle})=\{av\}. \]
Hence we also see $B(E{\co}{\langle}k{\rangle})$ has the dominant 
summands 
$X_{0,0}\vee X_{2,0}\vee X_{0,1}\vee X_{2,1}.$
Moreover it has non dominant summands $2M(2)$ since 
$H^{4}(B(E{\co}{\langle}k{\rangle}))
\cong \bZ/3\{C,a\} $. 
Thus we can give an another proof of \fullref{thm:thm6.2}
from \fullref{lem:lem4.7} and the cohomologies $H^*(BG)$.

\section[Cohomology for B7(1+2) I]
{Cohomology for $B7_+^{1+2} $ I.}
\label{sec:sec7}

In this section, we assume $p=7$ and $E=7_+^{1+2}$.
We are interested in groups 
$O'N,O'N \colon 2,He,He{\co}2,Fi_{24}',Fi_{24}$
and three exotic $7$--local groups. Denote them by $RV_1,RV_2,RV_3$
according the numbering in \cite{R-V}.
We have the diagram from Ruiz and Viruel
\begin{footnotesize}
\[\begin{cases}\stackrel{3SD_{32}}{\longleftarrow}
\    \stackrel{SL_2(\bF_7){\colon}2}{RV_3}
\    \stackrel{3SD_{16}}{\longleftarrow}
\    \stackrel{SL_2(\bF_7){\colon}2,SL_2(\bF_7){\colon}2}{RV_2}
\    \stackrel{3SD_{16}}{\longleftarrow}
\    \stackrel{SL_2(\bF_7){\colon}2}{O'N{\colon}2}
\    \stackrel{3D_{8}}{\longleftarrow}
\    \stackrel{SL_2(\bF_7){\colon}2,Sl_2(\bF_7){\colon}2}{O'N}        
        \\ 
\quad \\
\stackrel{6^2{\colon}2}{\longleftarrow}
\stackrel{SL_2(\bF_7){\colon}2, GL_2(\bF_7)}{RV_1}
\stackrel{6^2{\colon}2}{\longleftarrow}
\stackrel{SL_2(\bF_7){\colon}2}{Fi_{24}}
\stackrel{6S_3}{\longleftarrow}
\stackrel{SL_2(\bF_7){\colon}2,SL_2(\bF_7){\colon}2}{Fi_{24}'}
\stackrel{6S_3}{\longleftarrow}
\stackrel{SL_2(\bF_7){\colon}2}{He{\colon}2} 
\stackrel{3S_3}{\longleftarrow}
\stackrel{SL_2(\bF_7)}{He} 
\end{cases} \]
\end{footnotesize}
Here $  \stackrel{H}{\longleftarrow} 
\stackrel{W_1,\ldots,W_2}{G}$
means $W_G(E)\cong H, W_i=W_G(A_i)$ for $G$--conjugacy classes of 
$F^{\ec}A-$
subgroups $A_i$.

In this section, we study the cohomology of $O'N,RV_2,RV_3$.
First we study the cohomology of $G=O'N$.  
The multiplicative generators of
$H^*(BE)^{3D_8}$ are still studied in \cite[Lemma 7.10]{T-Y}.
We will study more detailed cohomology structures here.

\begin{lemma} \label{lem:lem7.1} There is the $CA$--module isomorphism
$$
H^*(BE)^{3D_8} \cong CA\{1,a,a^2,a^3/V,a^4/V,a^5/V,
b,ab/V,a^2b/V,  d,ad,a^2d\}, 
$$
where $a=(y_1^2+y_2^2)v^2$,$b=y_1^2y_2^2v^4$ and  
$d=(y_1y_2^3-y_1^3y_2)v$.
\end{lemma}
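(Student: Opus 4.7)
The plan is to verify that the twelve listed elements lie in $H^*(BE)^{3D_8}$ and then to establish the free $CA$-module decomposition by matching Poincar\'e series. Since $|3D_8|=24$ is coprime to $p=7$, taking invariants is exact and the Reynolds operator $\tfrac{1}{24}\sum_{g\in 3D_8}g$ is available. Fix generators of $3D_8\subset GL_2(\bF_7)\cong \Out(E)$: the dihedral subgroup $D_8=\langle w,w'\rangle$ as in Sections~5--6 (with $w^*$ sending $y_1\mapsto y_2$, $y_2\mapsto -y_1$, and $w'^*$ sending $y_1\mapsto y_1$, $y_2\mapsto -y_2$, together with $v\mapsto \det(\cdot)v$ by~\eqref{eqn:eqn3.4}), together with a central scalar $\zeta I$ where $\zeta\in\bF_7^*$ has order~$3$.

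Invariance of the proposed generators is then a routine check. For $a=(y_1^2+y_2^2)v^2$: the $D_8$-factor permutes $\{y_1^2,y_2^2\}$ and fixes $v^2$ (since $\det(g)^2=1$ for $g\in D_8$), while $\zeta I$ scales $a$ by $\zeta^{2+4}=\zeta^6=1$. Similarly $b=y_1^2y_2^2v^4$ is $D_8$-symmetric and scales by $\zeta^{12}=1$; and $d=(y_1y_2^3-y_1^3y_2)v$ is anti-invariant under $w'$ in its polynomial factor (compensated by $v\mapsto -v$), invariant under $w$ (with $v\mapsto v$), and scales by $\zeta^6=1$. The ``divided'' expressions are interpreted via $V=v^6$; for instance $a^3/V=(y_1^2+y_2^2)^3$ and $ab/V=(y_1^2+y_2^2)y_1^2y_2^2$ are genuine polynomials in $H^*(BE)$ whose $3D_8$-invariance is inherited.

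To show that these twelve elements form a free $CA$-basis, I would compute the Poincar\'e series of $H^*(BE)^{3D_8}$ via a Molien-type formula applied to the presentation~\eqref{eqn:eqn3.2} of $H^*(BE)$ as a free $\bZ/7[C,v]$-module of rank $48$ on $\{y_1^iy_2^j:0\le i,j\le 6,\,(i,j)\ne(6,6)\}$:
\[
P(t)=\frac{1}{(1-t^{12})(1-t^{14})}\cdot\frac{1}{24}\sum_{g\in 3D_8}\mathrm{tr}\bigl(g\bigm|\bZ/7\{y_1^iy_2^j\}\bigr)(t).
\]
Matching this against the Poincar\'e series of the proposed free $CA$-module,
\[
\frac{1+2t^{12}+t^{22}+t^{32}+2t^{44}+t^{54}+2t^{64}+t^{76}+t^{86}}{(1-t^{12})(1-t^{84})},
\]
whose numerator records the degrees of the twelve listed generators, simultaneously establishes that they span $H^*(BE)^{3D_8}$ and are $CA$-linearly independent.

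The main obstacle is the bookkeeping in the Molien sum over the 24 elements of $3D_8$, together with careful handling of the relation $y_1^7y_2=y_1y_2^7$ when identifying the trace with a rational function. A cleaner verification of $CA$-linear independence, once spanning is granted, is to restrict to an abelian subgroup $A_i\subset E$ using~\eqref{eqn:eqn3.3}: the images of $C,V,a,b,d$ become explicit polynomials in $\bZ/7[y,u]$, and inspecting the leading monomials of the twelve restrictions confirms independence over $\bZ/7[C|A_i,V|A_i]$.
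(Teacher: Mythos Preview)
Your Molien formula is incorrect as written. You have pulled the factor $\frac{1}{(1-t^{12})(1-t^{14})}$ outside the sum, which implicitly treats $v$ as $3D_8$-invariant. It is not: by \eqref{eqn:eqn3.4} one has $g^*v=\det(g)\,v$, and $\det$ is nontrivial on $3D_8$ (for instance $\det(\diag(1,-1))=-1$ and $\det(\zeta I)=\zeta^2$). The correct contribution of $g$ to the average is
\[
\frac{\mathrm{tr}\bigl(g\mid \bZ/7\{y_1^iy_2^j\}\bigr)(t)}{(1-t^{12})\bigl(1-\det(g)\,t^{14}\bigr)},
\]
and the $v$-factor cannot be extracted from the sum. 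A clean fix is to work over $CA=\bZ/7[C,V]$ from the outset: both $C$ and $V=v^6$ are genuinely $GL_2(\bF_7)$-invariant, $H^*(BE)$ is free of rank $288$ over $CA$ on the basis $\{y_1^iy_2^jv^k\}$, and then your target denominator $(1-t^{12})(1-t^{84})$ appears honestly. Your closing remark about checking $CA$-independence by restricting to a single $A_i$ is also too optimistic, since e.g.\ $d|A_i=i(i^2-1)y^4(u^7-y^6u)$ vanishes for $i\in\{0,\pm1,\infty\}$; you would need several restrictions, or simply rely on the (corrected) Poincar\'e count together with a spanning argument.

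For comparison, the paper does not use Molien at all. It argues constructively: first it lists all monomials $y_1^iy_2^jv^k$ (with $0\le i,j\le 6$, $(i,j)\ne(6,6)$, $0\le k\le 5$) invariant under the diagonal subgroup $\langle\diag(-1,1),\diag(2,2)\rangle$, which amounts to the congruences $i\equiv j\equiv k\pmod 2$ and $i+j+2k\equiv 0\pmod 3$. Symmetrizing these under $w$ produces an explicit $CA$-basis $\{1,a,a^2,a',b,b',c,c',c'',d,ad,bd\}$, and a short calculation then rewrites $a',b',c,c',c''$ as $a^3/V,a^4/V,a^5/V,ab/V,a^2b/V$ modulo the others. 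This is more hands-on but avoids the bookkeeping you flag as the main obstacle.
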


\begin{proof}
The group  $3D_8\subset GL_2(\bF_7)$
is generated by $\diag(-1,1),(2,2)$ and $w=\left(\begin{smallmatrix} 
0&1\\ -1&0\end{smallmatrix}\right)$.  
If $y_1^iy_2^jv^k$ is invariant under $\diag(-1,1),\diag(1,-1)$
and $\diag(2,2)$, then $i=j=k \mmod(2)$ and $i+j+2k=0 \mmod(3)$.
When $i,j\le 6,k\le 5$ but $(i,j)\not =(6,6)$, 
the invariant monomials have the following terms,
$y_1^2v^2$, $y_1^4v^4$, $y_1^6$,
$y_1^2y_2^2v^4$, $y_1^4y_2^4v^2$, $y_1y_2v^5$, $y_1^3y_2^3v^3$,
$y_1^5y_2^5v$, $y_1^2y_2^4$, $y_1^2y_2^6v^2$, $y_1^4y_2^6v^4$,
$y_1y_2^3v$, $y_1y_2^5v^3$, $y_1^3y_2^5v^5$
and terms obtained by exchanging $y_1$ and $y_2$.  Recall that
$w \co y_1\mapsto y_2,y_2\mapsto -y_1$ and $v\to v$. 
From the expression of \eqref{eqn:eqn3.2}, we have 
\[H^*(BE)^{3D_8}\cong CA\{1,a,a^2,a', b,b',
c,c',c'', d,ad,bd\}\]
where $a=(y_1^2+y_2^2)v^2$,$a'=y_1^6+y_2^6$,$b=y_1^2y_2^2v^4$,
$b'=y_1^4y_2^4v^2$, $c=(y_1^2y_2^4+y_1^4y_2^2)$,
$c'=(y_1^2y_2^6+y_1^6y_2^2)v^2$ , $c''= (y_1^4y_2^6+y_1^6y_2^4)v^4$,
$d=(y_1y_2^3-y_1^3y_2)v$, $ad=(y_1y_2^5-y_1^5y_2)v^3$
and $bd=(y_1^3y_2^5-y_1^5y_2^3)v^5$.
Here $a^2d=bd$ from $(y_1^6-y_2^6)y_1y_2=0$ in $H^*(BE)$.
It is easily seen that 
$b'V=b^2$, $cV=ab$, $c'V=(a^2-2b)b$ and $c''V=ab^2$.
Moreover we get 
\begin{align*}
a^3/V& =(y_1^2+y_2^2)^3 =
(y_1^6+y_2^6)+3y_1^2y_2^2(y_1^2+y_2^2)=a'+3ab \\
a^4/V& =(y_1^2+y_2^2)^4v^2=
((y_1^8+y_2^8)+4y_1^2y_2^2(y_1^4+y_2^4)+6y_1y_2^4)v^2 \\
& =aC+4c'+6b' \\
a^5/V & =((y_1^{10}+y_2^{10})+5y_1^2y_2^2(y_1^6+y_2^6)+
10y_1^4y_2^4(y_1^2+y_2^2))v^4 \\
& =c'C+10bC+10c''.
\end{align*}
Hence, we can take generators $a^4/V,a^5/V,ab/V,a^2b/V$ for $b',c'',
c,c'$ respectively, and get the lemma.
\end{proof}

Note that the  computations shows
\begin{align*}
a^6& =(y_1^2+y_2^2)^6v^{12}
=(y_1^{12}-y_1^{10}y_2^2+y_1^8y_2^4-y_1^6y_2^6+y_1^4y_2^8-y_1^2y_2^{10}
+y_2^{12})V^2\\
&=(y_1^{12}-y_1^6y_2^6+y_2^{12})V^2 =C^2V^2=D_2^2,
\end{align*}
where we use the fact $y_1^7y_2-y_1y_2^7=0$.   

\begin{lemma} \label{lem:lem7.2}
$ H^*(BE)^{3SD_{16}}\cong CA\{1,a,a^2,a^3/V,a^4/V,a^5/V\}.$
\end{lemma}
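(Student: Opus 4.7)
The plan is to exploit the index-$2$ inclusion $3D_8 \subset 3SD_{16}$, so that $H^*(BE)^{3SD_{16}} = (H^*(BE)^{3D_8})^{\sigma}$ for any representative $\sigma$ of the nontrivial coset. A convenient choice is $\sigma = \bigl(\begin{smallmatrix} 2 & -2 \\ 2 & 2 \end{smallmatrix}\bigr) \in GL_2(\bF_7)$, which has order $8$ with $\sigma^2 = -w \in 3D_8$ while $\sigma$ itself is not in $3D_8$ (its entries cannot be written as scalar multiples of entries of a $D_8$-matrix). Since $C$ and $V$ are $GL_2(\bF_7)$-invariant, $\sigma$ acts $CA$-linearly on the free rank-$12$ $CA$-module $H^*(BE)^{3D_8}$ of \fullref{lem:lem7.1}, so its fixed points form a $CA$-submodule.

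First I would verify $\sigma^* a = a$: computing
\[ \sigma^*(y_1^2 + y_2^2) = (2y_1 - 2y_2)^2 + (2y_1 + 2y_2)^2 = 8(y_1^2 + y_2^2) \equiv y_1^2 + y_2^2 \]
in $\bF_7$, combined with $\det(\sigma) = 1$ (so $\sigma^* v = v$). Hence the entire $a$-tower $1, a, a^2, a^3/V, a^4/V, a^5/V$ is $\sigma$-fixed, establishing $CA\{1, a, a^2, a^3/V, a^4/V, a^5/V\} \subset H^*(BE)^{3SD_{16}}$.

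For the reverse inclusion, the key calculations are
\[ \sigma^* b = 2a^2 - b, \qquad \sigma^* d = -d, \]
obtained by expanding $(2y_1 \pm 2y_2)^k$ and reducing modulo $7$. These propagate by $CA$-linearity to $\sigma^*(a^i b/V) = 2(a^{i+2}/V) - (a^i b/V)$ and $\sigma^*(a^i d) = -a^i d$ for $i = 0, 1, 2$. Writing an arbitrary element of $H^*(BE)^{3D_8}$ uniquely as $\sum_{i=0}^{5} f_i g_i^{(a)} + \sum_{j=0}^{2} h_j g_j^{(b)} + \sum_{k=0}^{2} l_k g_k^{(d)}$ with $f_i, h_j, l_k \in CA$, the equation $\sigma^* x = x$ forces $l_k = 0$ (since $\sigma^*$ acts as $-1$ on the $d$-tower), while matching the coefficient of $a^{i+2}/V^{\varepsilon}$ in the $a$-tower produces $2 h_i = 0$, hence $h_i = 0$. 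Thus $x$ lies entirely in the $a$-tower submodule, proving equality.

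The main obstacle is verifying the two identities $\sigma^* b = 2a^2 - b$ and $\sigma^* d = -d$, which require expanding polynomial products such as $(2y_1 - 2y_2)(2y_1 + 2y_2)^3$ and re-expressing the resulting quartic and quintic monomials in terms of $a^2 = (y_1^4 + 2y_1^2 y_2^2 + y_2^4)v^4$, $b$, and $d$. The computation is elementary but combinatorially delicate in $\bF_7$.
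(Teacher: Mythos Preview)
Your proof is correct and follows essentially the same approach as the paper: both arguments exploit the index-$2$ inclusion $3D_8\subset 3SD_{16}$, pick a representative of the nontrivial coset, compute that it fixes $a$ while sending $b\mapsto 2a^2-b$ and $d\mapsto -d$, and then read off the fixed submodule from \fullref{lem:lem7.1}. The paper uses $k'=\bigl(\begin{smallmatrix}-1&1\\-1&-1\end{smallmatrix}\bigr)$ (with $\det k'=2$) rather than your $\sigma$, but since $\sigma=\diag(-2,-2)\,k'$ with $\diag(-2,-2)\in 3D_8$, the two representatives lie in the same coset and the resulting computations are equivalent; the paper then simply says the result is ``almost immediate'' where you spell out the coefficient-matching argument.
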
 
\begin{proof}
Take the matrix 
$k'=\bigl(\begin{smallmatrix}-1&1\\-1&-1\end{smallmatrix}\bigr)$ such that
${\langle}3D_8,k'{\rangle}\cong 3SD_{16}$.  Then we have
\begin{align*}
{k'}^*{\co} a & =(y_1^2+y_2^2)v^2\mapsto ((-y_1+y_2)^2+(-y_1-y_2)^2)(2v)^2=a,\\
b&=y_1^2y_2^2v^4\mapsto (y_1^2-y_2^2)^2(2v)^4=2(a^2-4b)=2a^2-b.
\end{align*}
(If we take $\tilde b=b-a^2$, then ${k'}^*{\colon}\tilde b\mapsto 
-\tilde b$.)
Similarly we can compute $k' \co  d\mapsto -d$.
Then the lemma is almost immediate from the preceding lemma.
\end{proof}

\begin{lemma} \label{lem:lem7.3}
$ H^*(BE)^{3SD_{32}}\cong CA\{1,a^2,a^4/V\}$.
\end{lemma}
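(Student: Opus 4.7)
The plan is to exploit that $3SD_{16}$ has index $2$ in $3SD_{32}$ and to compute the further invariants of the ring from \fullref{lem:lem7.2} under one additional generator $g$. Any such $g$ fixes the Dickson-type subring $CA = H^*(BE)^{GL_2(\bF_7)}$ pointwise and so acts $CA$-linearly on the basis $\{1, a, a^2, a^3/V, a^4/V, a^5/V\}$.

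The crux is to show $g^*a = -a$. A conceptual route is a degree count: with $|C|=12$, $|V|=84$, and $|a|=32$, the equation $12i+84j=32$ has no non-negative integer solution, so no element of $CA$ has degree $32$ and the degree-$32$ component of $H^*(BE)^{3SD_{16}}$ is one-dimensional, spanned by $a$. Hence $g^*a = \lambda a$ for some $\lambda \in \bF_7^*$, and since $g^2 \in 3SD_{16}$ fixes $a$, we must have $\lambda = \pm 1$. To eliminate $\lambda = +1$, I would exhibit $g$ concretely: take $g = \bigl(\begin{smallmatrix} 2 & 4 \\ 3 & 2\end{smallmatrix}\bigr) \in GL_2(\bF_7)$, for which $g^4 = w$, so that $g$ has order $16$ and lies in the cyclic index-$2$ subgroup of the $2$-Sylow $SD_{32}$ of $GL_2(\bF_7)$. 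Using $g^*y_1=2y_1+4y_2$, $g^*y_2=3y_1+2y_2$, $g^*v=(\det g)v=-v$, a direct $\bF_7$-computation gives $g^*(y_1^2+y_2^2)=6(y_1^2+y_2^2)=-(y_1^2+y_2^2)$ and $g^*v^2=v^2$, whence $g^*a=-a$.

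Once $g^*a=-a$ and $g^*V=V$ are in hand, the induced action on the $CA$-basis reads $a^k/V^j \mapsto (-1)^k a^k/V^j$, so the $g$-fixed submodule is exactly $CA\{1, a^2, a^4/V\}$, proving the claim. The main obstacle is the group-theoretic housekeeping: confirming that this $g$ lies in the particular conjugate of $3SD_{32}$ under consideration, and that $\langle 3SD_{16}, g\rangle = 3SD_{32}$. The latter follows since $g \notin 3SD_{16}$ (because $g^*a \ne a$) and $[3SD_{32}:3SD_{16}]=2$, so a single coset representative already generates the extension.
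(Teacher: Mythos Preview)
Your proof is correct and follows essentially the same approach as the paper: exhibit an element of $3SD_{32}\setminus 3SD_{16}$ and show it sends $a\mapsto -a$, then read off the fixed $CA$--submodule. The paper uses $l'=\bigl(\begin{smallmatrix}-1&3\\-3&-1\end{smallmatrix}\bigr)$ (with $l'^2=k'$) and computes ${l'}^*a=-a$ directly; your matrix satisfies $g=-k'l'$ in $GL_2(\bF_7)$, which resolves the housekeeping you flagged about $g$ lying in the paper's chosen $3SD_{32}$.
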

\begin{proof}
Take the matrix 
$l'=\left(\begin{smallmatrix}-1&3\\-3&-1\end{smallmatrix}\right)$ so that
${l'}^2=k'$ and ${\langle}3SD_8,l'{\rangle}\cong 3SD_{32}$. 
We see that
\[{l'}^*{\colon}
a=(y_1^2+y_2^2)v^2\mapsto ((-y_1+3y_2)^2+(-3y_1-y_2)^2)(3v)^2=-a,\]
which shows the lemma.
\end{proof} 

\begin{thm} \label{thm:thm7.4}
There is the isomorphism with $C'=C-a^3/V$
\[H^*(BO'N)\cong DA\{1,a,a^2, b,ab,a^2b\}
\oplus CA\{d,ad,a^2d,C',C'a,C'a^2\}\] 
\end{thm}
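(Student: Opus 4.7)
The plan is to apply \fullref{thm:thm3.1} for $G = O'N$. Since $W_G(E) = 3D_8$, I start from \fullref{lem:lem7.1}, which describes $H^*(BE)^{3D_8}$ as a free $CA$-module on twelve generators, and intersect with the preimages $i_A^{*-1}H^*(BA)^{W_G(A)}$ over representatives of the two $G$-conjugacy classes of $F^{\ec}$-radical $A$-subgroups. A direct check of the $3D_8$-action on $\{A_0,\ldots,A_6,A_\infty\}$ via $\diag(-1,1), \diag(2,2), w$ yields three orbits $\{A_0,A_\infty\}$, $\{A_1,A_6\}$, $\{A_2,A_3,A_4,A_5\}$; the two radical classes will turn out to be the first two (confirmed a posteriori by $C' = C - a^3/V$ vanishing precisely on those orbits), both with $W_G(A) \cong SL_2(\bF_7){\colon}2$.

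Next I identify the target invariants. The $SL_2(\bF_7)$-fundamental invariant in $H^*(BA)\cong \bZ/7[y,u]$ is $e = yu^7 - y^7 u$, which satisfies $e^{p-1}=\tilde D_2$ and transforms by the determinant character under $GL_2(\bF_7)$, so
\[
H^*(BA)^{SL_2(\bF_7){\colon}2} \cong \bZ/7[\tilde D_1, \tilde D_2]\{1, e^2, e^4\}.
\]
Using (3.3), the formula $v|A_i = u^7 - y^6 u$ gives $yv|A_i = e$, whence $a|A_i = (1+i^2)e^2$, $b|A_i = i^2 e^4$, $d|A_i = (i^3-i)y^3 e$, $C|A_i = y^6$ and $a^j/V|A_i = (1+i^2)^j y^6 e^{2(j-3)}$ for $j = 3,4,5$, with the analogous formulas at $A_\infty$. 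The forward inclusion of both summands then follows by checking on all four radical $A_i \in \{A_0, A_1, A_6, A_\infty\}$: the classes $1, a, a^2, b, ab, a^2b$ restrict to $\bF_7$-multiples of $e^{2k}$ and so remain in the invariants after multiplication by $DA$; the classes $d, ad, a^2d$ vanish on all four radical $A_i$ because $i^3 - i \equiv 0 \pmod 7$ for $i \in \{0, \pm 1\}$ and $d|A_\infty = 0$; and the identity $(1+i^2)^3 \equiv 1 \pmod 7$ for $i \in \{0, \pm 1, \infty\}$ (while $(1+i^2)^3 \equiv -1$ on the nonradical orbit) forces $C'|A_i = 0$ at every radical $A_i$, so all of $CA\{C', C'a, C'a^2\}$ lies in the intersection.

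The main obstacle is the reverse inclusion. Writing a general element of $H^*(BE)^{3D_8}$ as $\sum_j f_j g_j$ with $f_j \in CA = DA\{1, C, \ldots, C^6\}$ and $g_j$ the twelve generators from \fullref{lem:lem7.1}, the conditions that each restriction to a radical $A_i$ lie in $\bZ/7[\tilde D_1, \tilde D_2]\{1, e^2, e^4\}$ become a finite system of $\bF_7$-linear equations on the $f_j$. The key input is that $CA|A_i = \bZ/7[y^6, (u^7-y^6 u)^6]$ meets the $SL_2(\bF_7){\colon}2$-invariants only along $DA|A_i = \bZ/7[\tilde D_1, \tilde D_2]$, so any pure $y^{6m}$-term outside $(CV)^m$ must cancel between the generators in the same degree. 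Together with the fact that $(1+i^2)^j$ takes only the values $\pm 1$ on the radical orbits (being $+1$ exactly when $j \equiv 0 \pmod 3$), this forces the $CA$-coefficients of $a^3/V, a^4/V, a^5/V$ to aggregate into $CA$-multiples of $C', C'a, C'a^2$ respectively; analogously the coefficients of $ab/V, a^2b/V$ reduce to $DA$-multiples of $b, ab, a^2b$. Verifying that no further unexpected combinations survive (e.g., in the small number of low degrees where multiple generators coexist) then completes the direct-sum decomposition.
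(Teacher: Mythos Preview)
Your approach is essentially the same as the paper's: both apply \fullref{thm:thm3.1} with $W_G(E)=3D_8$, start from the $CA$--basis of \fullref{lem:lem7.1}, identify the two radical orbits $\{A_0,A_\infty\}$ and $\{A_1,A_6\}$ with $W_G(A)\cong SL_2(\bF_7){\colon}2$, and verify the forward inclusion by the same restriction computations (in particular $d|A_0=d|A_1=0$, $C'|A_0=C'|A_1=0$, $a|A_i=(1+i^2)e^2$, $b|A_0=0$, $b|A_1=e^4$). The paper's own proof is quite terse and in effect only writes out the forward inclusion, relying on the change of $CA$--generators $a^{j}/V \leadsto C'a^{j-3}$ and an implicit count; your proposal makes the reverse inclusion explicit via the observation that $CA|A_i$ meets $\bZ/7[\tilde D_1,e^2]$ only in $DA|A_i$, together with a degree--mod--$12$ separation of the $e^0,e^2,e^4$ components. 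That extra step is correct and is the honest content behind the paper's ``Thus we can compute\ldots''. One minor redundancy: it suffices to check a single representative from each $G$--conjugacy class (e.g.\ $A_0$ and $A_1$), since membership in $H^*(BE)^{3D_8}$ already forces compatibility across each orbit.
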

\begin{proof} 
Let $G=O'N$.
The orbits of $N_G(E)$--action of $A$--subgroups in $E$ are given by
$\{A_0,A_{\infty}\}$,$\{A_1,A_6\}$ and $\{A_2,A_3,A_4,A_5\}$.
From Ruiz and Viruel \cite{R-V}, $A_0$, $A_{\infty}$, $A_1$ and $A_6$ are
$F^{\ec}$--radical subgroups.  Hence we know that
\[H^*(O'N)\cong H^*(BE)^{3D_8}\cap 
i_{A_0}^{*-1}H^*(BA_0)^{SL_2(\bF_7){\co}2}
\cap i_{A_1}^{*-1}H^*(BA_1)^{SL_2(\bF_7){\co}2}.\]
For element $x=d$ or $x=C'$, the restrictions are
$x|A_0=x|A_1=0$.
Hence we see that $CA\{x\}$  are contained in $H^*(BG)$.
We can take  $C',C'a,C'a^2$ instead of $a^3/V$,
$a^4/V$ and $a^5/V$ 
as the $CA$--module generators since 
$a^3/V=(C-C').$
Moreover we know $CA\{C',C'a,C'a^2\}\subset H^*(BG)$.

It is known that 
$\bZ/p[y,u]^{SL_p(\bF_p)}\cong \bZ/p[\tilde D_1,\tilde D_2']$
where $\tilde D_2'=y_1u^p-y_1^pu$ 
and $(\tilde D_2')^{p-1}=\tilde D_2$.
Hence we know $\bZ/7[y,u]^{SL_2(\bF_7){\co}2}\cong \bZ/7[\tilde 
D_1,
(\tilde D_2)^2].$

Since $y_1v|A=\tilde D'_2$ we see
$a|A_0=(\tilde D'_2)^2, a|A_1=2(\tilde D'_2)^2$.
Hence $a,a^2$ are in $H^*(BG)$.
The fact $b|A_0=0$ and $b|A_1=(\tilde D'_2)^4$, implies that 
$b\in H^*(BG)$.  Hence all $a^ib^j$ are also in $H^*(BG)$.
\end{proof}

Next we consider the group $G=O'N{\co}2$.  Its Weyl group $W_G(E)$
is isomorphic to $3SD_{16}$. 
So we have $H^*(B(O'N{\colon}2))\cong H^*(BO'N)\cap 
H^*(BE)^{3SD_{16}}.$

\begin{cor} \label{cor:cor7.5} $H^*(B(O'N{\colon}2))\cong  (DA\{1,a,a^2\}
\oplus CA\{C',C'a,C'a^2\}).$
\end{cor}
\begin{cor} \label{cor:cor7.6}
$H^*(BRV_2)\cong  DA\{1,a,a^2,a^3,a^4,a^5\}.$
\end{cor}
\begin{proof} 
Let $G=RV_2$.  Since $A_2$ is also $F^{\ec}$--radical and $W_G(A_2)=
SL_2(\bF_7){\co}2$.  Hence we have
\[H^*(BG)\cong H^*(BE)^{3SD_{16}}
\cap i_{A_2}^{*-1}H^*(BA_2)^{SL_2(\bF_7){\co}2}.\]
Hence we have the corollary of the theorem.
\end{proof}

Since $H^*(BRV_3)\cong H^*(BE)^{3SD_{32}}\cap H^*(BRV_2)$, we have 
the following corollary.
\begin{cor}\label{cor:cor7.7}
$H^*(BRV_3)\cong  DA\{1, a^2,a^4\}.$
\end{cor}

\fullref{cor:cor7.7} can also be proved in the following way.
\begin{proof}
Let $G=RV_3$.  Since there is just 
one $G$--conjugacy class of $A$--subgroups, 
by Quillen's theorem \cite{Q}, we know
\[H^*(BRV_3)\subset H^*(BA_0)^{SL_2(\bF_7){\co}2}\cong 
DA\{1,(\tilde D_2')^2,(\tilde D_2')^4\}\text{ with }(\tilde D_2')^6=\tilde 
D_2.\]
Note that $a^2|A_0=(\tilde D_2')^4,\ a^4|A_0=(\tilde D_2')^2 \tilde 
D_2$ 
and $D_2|A_0=\tilde D_2$.
The fact ${k'}^*{\co}a\mapsto -a$ implies that 
$DA\{a^2,a^4\}\subset
H^*(BG)$ but $DA\{a,a^3,a^5\}\cap H^*(BG)=0$.
\end{proof}

\fullref{cor:cor7.6} can also be proved in the following way.
\begin{proof}
Let $G=RV_2$.  Since there is just 
two $G$--conjugacy classes of $A$--subgroups, 
by Quillen's theorem \cite{Q}, we know
\[H^*(BRV_2)\subset  H^*(BA_0)^{SL_2(\bF_7){\co}2}
\times H^*(BA_2)^{SL_2(\bF_7){\co}2}\]
Since $a\in H^*(BRV_2)$, the map 
$i_0^*{\co} H^*(BRV_2)\to H^*(BA_0)^{SL_2(\bF_7){\co}2}$
is epimorphism. Take  $b'=b^2-2a^2b$ so that 
$b'|A_0=b'|A_1=0$. Hence 
$$\Ker i_{A_0}^* \supset DA\{b',b'a,C'V\}.$$
Moreover $b'|A_2=(\wbar D_2')^2\wbar D_2,
b'a|A_2=(\wbar D_2')^4\wbar D_2,
c'V|A_2=(\wbar D_2).$
Since $(\wbar D_2')^2$ itself is not in the image of $i_{A_2}^*$,
we get the isomorphism
\[H^*(BRV_2)\cong DA\{1,a,a^2\}\oplus DA\{c'V,b',b'a\}.\proved\]
\end{proof}

\section[Cohomology for B7(1+2) II]
{Cohomology for $B7_+^{1+2}$ II}
\label{sec:sec8}

In this section, we study cohomology of $He,Fi_{24},RV_1$.

First we consider the group $G=He$.  
The multiplicative generators of
$H^*(He)$  are still computed by Leary \cite{L1}.
We will study more detailed cohomology structures here.
The Weyl group is $W_G(He)\cong 3S_3$.

\begin{lemma} \label{lem:lem8.1} The invariant $H^*(BE)^{3S_3}$ is
isomorphic to
\[ CA\otimes \{\bZ/7\{1,\wbar b,\wbar b^2\}\{1,\wbar a,\wbar b^3/V\}
\oplus \bZ/7\{\wbar d\}\{1,\wbar a,\wbar b,\wbar b^2/V,\wbar b^3/V\}
\oplus \bZ/7\{\wbar a^2\}),\]
where $\wbar a=(y_1^3+y_2^3)$, 
$\wbar b=y_1y_2v^2$ and $\wbar d=(y_1^3-y_2^3)v^3$.
\end{lemma}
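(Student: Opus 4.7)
The plan is to follow the template of the proof of Lemma 7.1: starting from the $\bZ/7[C,v]$-basis $\{y_1^iy_2^j\}$ of (3.2), refine it to the free $CA=\bZ/7[C,V]$-basis
$\{y_1^iy_2^jv^s:0\le i,j\le 6,\,(i,j)\ne(6,6),\,0\le s\le 5\}$
via the splitting $\bZ/7[C,v]=CA\{1,v,\ldots,v^5\}$, enumerate the monomials invariant under the torus part of $3S_3$, and then form $\sigma$-symmetric combinations of the survivors.

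For explicit generators I take $3S_3=\langle\diag(2,2),\,\diag(2,4),\,\sigma\rangle\subset GL_2(\bF_7)$ with $\sigma=\bigl(\begin{smallmatrix}0&1\\1&0\end{smallmatrix}\bigr)$; here $\diag(2,2)$ generates the central $\bZ/3$ of scalars and the latter two generate the complementary $S_3$. By formula (3.4) all three fix $C$ and $V$; the two diagonal matrices scale $y_1^iy_2^jv^s$ by $2^{i+j+2s}$ and $2^{i+2j}$ respectively, while $\sigma$ sends it to $(-1)^sy_1^jy_2^iv^s$. Consequently $(\bZ/3)^2$-invariance is equivalent to the two congruences $i\equiv j\pmod 3$ and $i+s\equiv 0\pmod 3$; a short case split by $(i,j)\bmod 3$ lists the surviving monomials, and the $\sigma$-invariant combinations are then the symmetric sums when $s$ is even (including the fixed monomials $i=j$ with $s$ even) and the antisymmetric differences when $s$ is odd. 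Rewriting each combination in terms of $\wbar a=y_1^3+y_2^3$, $\wbar b=y_1y_2v^2$, $\wbar d=(y_1^3-y_2^3)v^3$, and $\wbar a^2$, using the identities $\wbar b^k/V=y_1^ky_2^kv^{2k-6}$ (for $k\ge 3$), $\wbar a\wbar d=(y_1^6-y_2^6)v^3$, and $\wbar a^2=(y_1^6+y_2^6)+2\wbar b^3/V$, produces the $CA$-module generators stated in the lemma.

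Freeness of the resulting $CA$-submodule is automatic from Maschke's theorem, since $(|3S_3|,p)=(18,7)=1$: $H^*(BE)^{3S_3}$ is a $CA$-direct summand of the free $CA$-module $H^*(BE)$, hence projective and (by Quillen--Suslin over the polynomial ring $CA$) free. The main obstacle I anticipate is the reconciliation of the ring structure with the $\bZ/7[C,v]$-module basis of (3.2): products of basis monomials whose $y_i$-exponents reach $p=7$ must be reduced via $Cy_i=y_i^p$ and $y_1^py_2=y_1y_2^p$, which forces identities like $\wbar a^2\wbar b=2(C\wbar b+\wbar b^4/V)$. One therefore has to verify degree-by-degree that the proposed generating list spans $H^*(BE)^{3S_3}$ with no redundancy, matching dimensions in each grading against the enumeration of $\sigma$-invariant monomial combinations.
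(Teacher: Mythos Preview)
Your approach is correct and essentially the same as the paper's: both identify $3S_3$ as $T'\rtimes\langle w'\rangle$ with $T'=\{\diag(\lambda,\mu):\lambda^3=\mu^3=1\}$, reduce torus-invariance to the congruences $i\equiv j\equiv -s\pmod 3$ on the $CA$-monomial basis of (3.2), and then form (anti)symmetric combinations under $w'$, rewriting them in terms of $\wbar a,\wbar b,\wbar d$. The Maschke/Quillen--Suslin remark is a harmless embellishment---once you carry out the degree-by-degree enumeration you already commit to, freeness is visible directly and no abstract projectivity argument is needed.
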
 
\begin{proof}
The group  $3S_3\subset  GL_2(\bF_7)$
is generated by $T'=\{\diag(\lambda,\mu)|\lambda^3=\mu^3=1)$ and 
$w'=\left(\begin{smallmatrix} 0&1\\ 1&0\end{smallmatrix}\right)$.  
If $y_1^iy_2^jv^k$ is invariant under $T'$, then $i=j=-k \mmod(3)$.
When $i,j\le 6,k\le 5$ but $(i,j)\not =(6,6)$,
the invariant monomials have the following terms
\begin{eqnarray*}
\{1,\wbar c=\wbar b^3/V=y_1^3y_2^3\}\{1,v^3\}\{1,\wbar b=y_1y_2v^2,
\wbar b'=y_1^2y_2^2v\} \\
\{1,v^3\}\{y_1^3,y_1^6,y_1y_2^4v^2,y_1^2y_2^5v,y_1^3y_2^6\},
\end{eqnarray*}
and terms obtained by exchanging $y_1$ and $y_2$.  Recall that
$w' \co y_1\mapsto y_2,y_2\mapsto y_1$ and $v\to -v$. 
The following elements are invariant 
\begin{equation*}
\begin{array}{lll}
\wbar a\wbar b  =(y_1y_2^4+y_1^4y_2)v^2, &
\wbar a\wbar b^2 =(y_1^2y_2^5+y_1^5y_2^2)v^4, &
\wbar a\wbar c = (y_1^3y_2^6+y_1^6y_2^3),\\
\wbar b\wbar d  =(y_1y_2^4-y_1^4y_2)v^5, &
\wbar b^2\wbar d/V  =(y_1^2y_2^5-y_2^5y_2^2)v, &
\wbar c\wbar d =(y_1^3y_2^6-y_1^6y_2^3)v^3 \\
\wbar a\wbar d =(y_1^6-y_2^6)v^3, &
\wbar a\wbar b^3/V  =y_1^3y_2^6+y_1^6y_2^3. & \\
\end{array}
\end{equation*}
Thus we get the lemma from \eqref{eqn:eqn3.2}.
\end{proof}

\begin{lemma} \label{lem:lem8.2}
$ H^*(BE)^{6S_3}\cong CA\otimes
(\bZ/7\{1,\wbar b,\wbar b^2\}\{1,\wbar b^3/V\}\oplus \bZ/7\{\wbar d\wbar a,
\wbar a^2\}).$
\end{lemma}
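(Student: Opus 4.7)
The plan is to realize $6S_3$ as the extension of $3S_3$ by the central involution, and then extract the fixed part of that involution from the decomposition already obtained in \fullref{lem:lem8.1}. Since $3S_3$ is generated by the diagonal subgroup $T'=\{\diag(\lambda,\mu)\mid \lambda^3=\mu^3=1\}$ and by $w'$, and since $\diag(-1,-1)$ has order $2$ with $(-1)^3\neq 1$, this matrix lies outside $3S_3$; extending by it enlarges the diagonal part from order $9$ to order $36$, giving a group of order $36=|6S_3|$. Thus I would take $6S_3=\langle 3S_3,\diag(-1,-1)\rangle$.

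The next step is just to read off how $\sigma:=\diag(-1,-1)$ acts on the generators in \fullref{lem:lem8.1}. By \eqref{eqn:eqn3.4} we have $\sigma^*y_i=-y_i$, $\sigma^*v=\det(\sigma)v=v$ and $\sigma^*C=C$. Hence on any monomial $y_1^iy_2^jv^kC^l$ the action is multiplication by $(-1)^{i+j}$, i.e.\ the $y$--parity. In particular $\sigma$ fixes $CA$, fixes $\wbar b=y_1y_2v^2$ and $\wbar b^3/V=y_1^3y_2^3$, negates $\wbar a=y_1^3+y_2^3$ and $\wbar d=(y_1^3-y_2^3)v^3$, and therefore acts as $+1$ on the even $y$--degree pieces of $H^*(BE)^{3S_3}$ and as $-1$ on the odd ones.

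Starting from the $CA$--basis of $H^*(BE)^{3S_3}$ given in \fullref{lem:lem8.1}, I would then sort the basis elements by $y$--parity. The even ones are
\[1,\ \wbar b,\ \wbar b^2,\ \wbar b^3/V,\ \wbar b\cdot\wbar b^3/V=\wbar b^4/V,\ \wbar b^2\cdot\wbar b^3/V=\wbar b^5/V,\ \wbar a\wbar d,\ \wbar a^2,\]
while the rest ($\wbar a,\wbar a\wbar b,\wbar a\wbar b^2,\wbar a\cdot\wbar b^3/V$ and multiples, together with $\wbar d,\wbar b\wbar d,\wbar b^2\wbar d/V,\wbar b^3\wbar d/V$) have odd $y$--degree and are killed by the averaging over $\langle\sigma\rangle$. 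Since $\bZ/7\{1,\wbar b,\wbar b^2\}\otimes\bZ/7\{1,\wbar b^3/V\}$ recovers exactly the six even $\wbar b$--monomials, collecting the remaining two even generators $\wbar a\wbar d$ and $\wbar a^2$ separately yields the stated form of $H^*(BE)^{6S_3}$.

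The only mildly delicate point is bookkeeping: one must verify that the decomposition in \fullref{lem:lem8.1} is genuinely direct and that the $\sigma$--orbits respect that splitting, so that taking fixed points commutes with the direct sum. This is immediate here because $CA$ is $\sigma$--fixed and each summand is spanned by monomials of a single $y$--parity, so no cancellations across summands can occur. There is no real conceptual difficulty — the argument is a pure Galois-averaging computation on top of the structure already established in the previous lemma.
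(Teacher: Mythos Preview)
Your proof is correct and follows exactly the same approach as the paper: one identifies $6S_3=\langle 3S_3,\diag(-1,-1)\rangle$, records that $\diag(-1,-1)$ sends $\wbar a\mapsto -\wbar a$, $\wbar b\mapsto \wbar b$, $\wbar d\mapsto -\wbar d$ (and fixes $CA$), and then reads off the fixed subspace from the $CA$--basis of \fullref{lem:lem8.1}. One small bookkeeping slip: in your enumeration of the odd-parity elements you list $\wbar a\cdot\wbar b^3/V$, but that product is not one of the basis elements in \fullref{lem:lem8.1} (the factor set there is $\{1,\wbar a,\wbar b^3/V\}$, not $\{1,\wbar a\}\{1,\wbar b^3/V\}$); this does not affect the argument since your list of the eight even basis elements is correct.
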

\begin{proof}
We can think $6S_3={\langle}S_3,\diag(-1,-1){\rangle}$. The action 
$\diag(-1,-1)$
are given by 
$\wbar a\mapsto -\wbar a$,$\wbar b\mapsto 
\wbar b$, and $\wbar d\mapsto -\wbar d$.
From \fullref{lem:lem8.1}, we have the lemma.
\end{proof} 

\begin{lemma} \label{lem:lem8.3}
$H^*(BE)^{6^2{\co}2}\cong CA\{1,\wbar b^2,\wbar c'',\wbar b^4/V\}$ 
where $\wbar c''=\wbar a^2-2\wbar b^3/V-2C.$
\end{lemma}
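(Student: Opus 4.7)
The plan is to realize $6^2{:}2$ as an index--$2$ extension of the group $6S_3$ whose invariants were computed in \fullref{lem:lem8.2}, and then cut down by the extra generator. Explicitly, inside $GL_2(\bF_7)$ we have $6^2{:}2={\langle}6S_3,\ \diag(-1,1){\rangle}$, since $6S_3\cap T$ consists of $\diag(\lambda,\mu)$ with $\lambda^3=\mu^3=1$ together with the $\diag(-1,-1)$-coset, which has index $2$ in the full diagonal torus $T=(\bF_7^*)^2$. Hence
\[H^*(BE)^{6^2{:}2}=\bigl(H^*(BE)^{6S_3}\bigr)^{\diag(-1,1)}.\]

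Next I would just read off the action of $\diag(-1,1)$ on each of the eight $CA$--module generators listed in \fullref{lem:lem8.2}, using $y_1\mapsto -y_1$, $y_2\mapsto y_2$, and $v\mapsto -v$ (the last because $\det\diag(-1,1)=-1$, and $g^*v=\det(g)\,v$ by \eqref{eqn:eqn3.4}). A quick inspection shows that $1,\bar{b}^2,\bar{b}^4/V$ are fixed, while $\bar{b},\ \bar{b}^3/V,\ \bar{b}^5/V$, and $\bar{d}\bar{a}=(y_1^6-y_2^6)v^3$ are all negated, so these four drop out of the invariants. The only generator that is not an eigenvector is $\bar{a}^2=y_1^6+2y_1^3y_2^3+y_2^6$, which is sent to $\bar{a}^2-4\bar{b}^3/V$.

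To restore invariance, note that $\bar{a}^2-2\bar{b}^3/V=y_1^6+y_2^6$ is already fixed by $\diag(-1,1)$. Since $C\in CA$ is $GL_2(\bF_7)$--invariant, subtracting $2C$ does not change the $CA$--module spanned, and we may replace the generator by
\[\bar{c}''=\bar{a}^2-2\bar{b}^3/V-2C=y_1^6+y_2^6-2C,\]
which is clearly fixed by $\diag(-1,1)$. Thus the surviving $CA$--module is generated by $1,\ \bar{b}^2,\ \bar{c}'',\ \bar{b}^4/V$.

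The main (small) obstacle is the last point: to be certain that no further invariants hide inside $CA\{\bar{b},\bar{b}^3/V,\bar{b}^5/V,\bar{d}\bar{a}\}$, one must observe that these four generators are each $(-1)$--eigenvectors for $\diag(-1,1)$ while $CA$ is fixed, so no nonzero $CA$--linear combination of them is invariant. Combined with the $+1$--eigenspace identified above, this yields exactly the four claimed $CA$--module generators and proves the lemma.
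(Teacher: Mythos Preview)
Your proof is correct and follows essentially the same approach as the paper: realize $6^2{:}2$ as an index--$2$ extension of $6S_3$ by a diagonal element, compute the action on the eight $CA$--module generators from \fullref{lem:lem8.2}, and read off the invariants. The only cosmetic difference is that the paper uses $\diag(3,1)$ as the extra generator while you use $\diag(-1,1)$; since $3^3\equiv -1\pmod 7$, both choices induce the same sign action on each generator (e.g.\ $\bar b\mapsto -\bar b$, $\bar a^2\mapsto \bar a^2-4\bar b^3/V$, $\bar d\bar a\mapsto -\bar d\bar a$), so the computations are identical.
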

\begin{proof}
We can think $6^2{\co}2={\langle}3S_6,\diag(3,1){\rangle}$. The 
action 
$\diag(3,1)$
are given by 
$\wbar a^2\mapsto \wbar a^2-4\wbar c,\ \wbar b\mapsto -\wbar b,\
\wbar c\mapsto -\wbar c,\ \wbar d\wbar a\mapsto -\wbar d\wbar a$.
For example $\wbar b=y_1y_2v^2 \mapsto (3y_1)y_2(3v)^2=-\wbar b$.
Moreover we have $\wbar c''=Y_1+Y_2-2C\mapsto \wbar c''$.          
Thus we have the lemma.
\end{proof}                   

\begin{thm} \label{thm:thm8.4}
Let $\wbar c'=C+\wbar a^3/V. $Then there is the isomorphism
\[H^*(BHe)\cong DA\{1,\wbar b,\wbar b^2,\wbar d,\wbar d\wbar b,\wbar d\wbar 
b^2\}
\oplus CA\{\{\wbar a,\wbar c'\}\{1,\wbar b,\wbar b^2,\wbar d\}, \wbar a^2,
\wbar a^2\wbar c'\}.\]
\end{thm}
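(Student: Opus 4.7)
The plan is to apply \fullref{thm:thm3.1} to $G=He$. Since $W_{He}(E)\cong 3S_3$ and $He$ has a single $G$--conjugacy class of $F^{\ec}$--radical $A$--subgroups with Weyl group $SL_2(\bF_7)$ (from the diagram at the start of \fullref{sec:sec7}), the cohomology is
\[H^*(BHe)\cong H^*(BE)^{3S_3}\cap i_A^{*-1}H^*(BA)^{SL_2(\bF_7)},\]
with $H^*(BA)^{SL_2(\bF_7)}\cong \bF_7[\tilde D_1,\tilde D_2']$ and $(\tilde D_2')^6=\tilde D_2$. Since \fullref{lem:lem8.1} already describes the first factor as a $CA$--module, the task reduces to determining which $CA$-- or $DA$--multiples of its generators restrict into $\bF_7[\tilde D_1,\tilde D_2']$.

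I would first pin down the radical class by computing $\wbar a|A_i=(1+i^3)y^3$ from \eqref{eqn:eqn3.3}. Since $i^3\equiv-1\pmod 7$ exactly for $i=3,5,6$, the $3S_3$--orbit $\{A_3,A_5,A_6\}$ is the only candidate on which $\wbar a^2$ restricts to zero, which must hold in view of $\wbar a^2\in H^*(BHe)$; so fix $A=A_3$. Next I would tabulate the restrictions of each generator of \fullref{lem:lem8.1} to $A_3$, obtaining $\wbar a|A_3=\wbar a^2|A_3=0$, while $\wbar b|A_3=3(\tilde D_2')^2$, $\wbar b^2|A_3=9(\tilde D_2')^4$, and $\wbar d|A_3=2(\tilde D_2')^3$ all lie in $\bF_7[\tilde D_2']$. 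The essential point is that $C|A_3=y^6$ is \emph{not} an $SL_2(\bF_7)$--invariant (the smallest invariant has cohomological degree $16$), whereas $V, D_1, D_2$ all restrict to Dickson invariants.

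This dictates the structural split. Generators vanishing on $A_3$ (namely $\wbar a$ and its products $\wbar a\wbar b,\wbar a\wbar b^2,\wbar a\wbar d$, plus $\wbar a^2$) contribute $CA$--free summands, because any $CA$--multiple restricts to zero and hence trivially to an invariant. Generators with nonzero restriction (namely $1,\wbar b,\wbar b^2,\wbar d,\wbar d\wbar b,\wbar d\wbar b^2$) survive only with $DA$--multiples, because multiplication by $C$ picks up the factor $y^6$, which leaves $\bF_7[\tilde D_1,\tilde D_2']$, while multiplication by $D_1$ or $D_2$ stays inside it. This yields the summand $DA\{1,\wbar b,\wbar b^2,\wbar d,\wbar d\wbar b,\wbar d\wbar b^2\}$ together with the four $CA$--generators $\wbar a,\wbar a\wbar b,\wbar a\wbar b^2,\wbar a\wbar d$ and the generator $\wbar a^2$.

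The final ingredient is $\wbar c'=C+\wbar a^3/V$, engineered so that $\wbar c'|A_3=0$: the correction $\wbar a^3/V$ is the unique degree--$12$ element of $H^*(BE)^{3S_3}$ whose restriction cancels $C|A_3=y^6$. Once this vanishing is verified, $CA\cdot\wbar c'$, $CA\cdot\wbar c'\wbar b$, $CA\cdot\wbar c'\wbar b^2$, $CA\cdot\wbar c'\wbar d$ and $CA\cdot\wbar a^2\wbar c'$ all lie in $H^*(BHe)$ for the same reason as $CA\cdot\wbar a$, completing the list of ten $CA$--generators. The main obstacle is making the definition of $\wbar c'$ precise and verifying the decomposition is a direct sum with no hidden relations; this is best confirmed by a Poincar\'e series comparison between the claimed direct sum and the module of \fullref{lem:lem8.1} modulo the $SL_2(\bF_7)$--invariance constraint imposed on $A_3$.
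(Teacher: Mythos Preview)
Your approach is essentially the paper's: apply \fullref{thm:thm3.1} with the single radical class, show that $\wbar a$ and $\wbar c'$ restrict to zero on a representative (the paper uses $A_6$, you use $A_3$; both lie in the same orbit $\{A_3,A_5,A_6\}$), observe that $\wbar b$ and $\wbar d$ restrict to powers of $\tilde D_2'$, and read off the $DA$/$CA$ split.

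Two small points are worth flagging. First, the formula $\wbar c'=C+\wbar a^{\,3}/V$ in the statement is a typo: $|\wbar a|=6$, so $\wbar a^{\,3}/V$ would have negative degree. The paper's own proof writes $\wbar c'=C+\wbar c=C+y_1^3y_2^3$, i.e.\ $\wbar c'=C+\wbar b^{\,3}/V$, which is genuinely degree~$12$ and satisfies $\wbar c'|A_6=y^6+(-y)^3y^3=0$ (and likewise on $A_3$). You clearly have the right object in mind when you call it a degree-$12$ correction, but you should correct the formula rather than propagate it. Second, deducing which orbit is $F^{\ec}$--radical from the requirement $\wbar a^{\,2}\in H^*(BHe)$ is circular; the paper instead cites Leary for the group-theoretic fact that the radical orbit is $\{A_3,A_5,A_6\}$.
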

\begin{proof} 
Let $G=He$.
The orbits of $N_G(E)$--action of $A$--subgroups in $E$ are given by
\[\{A_0,A_{\infty}\},\qua \{A_1,A_2,A_4\},\qua \{A_3,A_5,A_6\}.\]
Since $A_6$ is the $F^{\ec}$--radical (see Leary \cite{L2}), we have
\[H^*(BHe)\cong H^*(BE)^{3S_3}\cap 
i_{A_6}^{*-1}H^*(BA_6)^{SL_2(\bF_7)}.\]
For element $x=\wbar a$ or $x=C+\wbar c=C+y_1^3y_2^3$, the restrictions 
are
$x|A_6=0$, eg $\wbar a|A_6=(y^3+(-y)^3)=0.$
Hence we see that $CA \{x\}$
are contained in $H^*(BG)$.

Since  $\wbar b=y_1y_2v^2$, we see 
$\wbar b|A_0=-y^2v^2=-(\tilde D_2')^2$.
Similarly $\wbar d|A_6=2(\tilde D_2')^3$.
Thus we can compute $H^*(BHe)$.
\end{proof}

\begin{cor}\label{cor:8.5}  $H^*(B(He{\co}2))\cong
DA\{1,\wbar b,\wbar b^2\}\oplus CA\{\wbar c',\wbar c'\wbar b,\wbar c'\wbar 
b^2,
\wbar a^2,\wbar a\wbar d\}.$
\end{cor}
\begin{thm}\label{thm:thm8.6} There is the isomorphism
\[H^*(BFi_{24}')\cong DA\{1,\wbar b,\wbar b^2,\wbar a^2V,\wbar c'\wbar bV,
\wbar c'\wbar b^2V\}
\oplus CA\{\wbar c'',\wbar a\wbar d\}
\text{ where }\wbar c''=\wbar a^2-2\wbar c'.\]
\end{thm}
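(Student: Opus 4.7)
The plan is to apply Theorem 3.1 directly to $G=Fi_{24}'$, combining Lemma 8.2 with restriction calculations at the two $F^{\ec}$-radical classes. From the diagram preceding Section 7, $W_G(E)\cong 6S_3$, and there are two conjugacy classes of $F^{\ec}$-radical $A$-subgroups, each with Weyl group $SL_2(\bF_7){\colon}2$. Running the $6S_3$-action on $\{A_i:i\in\bF_7\cup\{\infty\}\}$ yields three orbits: $\{A_0,A_\infty\}$, $\{A_1,A_2,A_4\}$, $\{A_3,A_5,A_6\}$. The first is non-radical (as already in $He$), so I take $A_1$ and $A_6$ as representatives of the two radical orbits. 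Theorem 3.1 then gives
$$H^*(BFi_{24}')\cong H^*(BE)^{6S_3}\cap i_{A_1}^{*-1}H^*(BA_1)^{SL_2(\bF_7){\colon}2}\cap i_{A_6}^{*-1}H^*(BA_6)^{SL_2(\bF_7){\colon}2},$$
with each radical invariant ring equal to $\bZ/7[\tilde D_1,(\tilde D_2')^2]$ as recalled in Section 7.

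Next I compute restrictions using $y_1|A_i=y$, $y_2|A_i=iy$ and $v|A_i=u_1:=u^7-y^6u$, so $\tilde D_2'|A=yu_1$. The relevant values, read off the generators in Lemma 8.2, are $\wbar b|A_i=i(yu_1)^2=i(\tilde D_2')^2$ for $i\in\bF_7^*$, $\wbar a|A_1=2y^3$ and $\wbar a|A_6=0$, $\wbar d|A_1=0$ and $\wbar d|A_6=2y^3u_1^3$, $\wbar c:=\wbar b^3/V=y_1^3y_2^3$ with $\wbar c|A_1=y^6$ and $\wbar c|A_6=-y^6$, together with $C|A=y^6$ and $V|A=u_1^6$.

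The two $CA$-module generators arise as elements vanishing on both radical classes. Clearly $\wbar a\wbar d$ does so. For $\wbar c''=\wbar a^2-2\wbar c'$, using $\wbar c'=C+\wbar c$, one computes $\wbar c''|A_1=4y^6-2y^6-2y^6=0$ and $\wbar c''|A_6=0-2y^6+2y^6=0$, so $CA\{\wbar c'',\wbar a\wbar d\}\subset H^*(BFi_{24}')$. For the six $DA$-module generators, I check that each restriction lands in $\bZ/7[\tilde D_1,(\tilde D_2')^2]$ while the corresponding $C$-multiple does not. The cases $1,\wbar b,\wbar b^2$ are routine; for $\wbar a^2V$ one gets $\wbar a^2V|A_1=4(yu_1)^6=4\tilde D_2$ and $\wbar a^2V|A_6=0$, whereas $C\wbar a^2V|A_1=4y^{12}u_1^6$ is no longer invariant. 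Similarly $\wbar c'\wbar bV|A_1=2(\tilde D_2')^8$ and $\wbar c'\wbar b^2V|A_1=2(\tilde D_2')^{10}$, both vanishing on $A_6$; hence each of $\wbar a^2V,\wbar c'\wbar bV,\wbar c'\wbar b^2V$ contributes exactly a $DA$-submodule, not a full $CA$-submodule.

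Completeness is the main obstacle. To verify that no further elements survive the triple intersection, I would expand an arbitrary element of $H^*(BE)^{6S_3}$ in the basis of Lemma 8.2 and sort its summands by $C$-degree: those whose radical restrictions vanish must lie in $CA\{\wbar c'',\wbar a\wbar d\}$, while the remaining summands, after tracking the $V$-adic valuation needed to pair $y$-powers with $u_1$-powers into $\bZ/7[\tilde D_1,(\tilde D_2')^2]$, reduce exactly to the six listed $DA$-generators. A Poincar\'e-series count on both sides would provide an independent numerical cross-check.
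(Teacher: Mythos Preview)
Your approach is correct and essentially the same as the paper's: both apply \fullref{thm:thm3.1} and check restrictions of the listed generators at the radical $A$--subgroups $A_1$ and $A_6$. The only organizational difference is that the paper bootstraps from \fullref{cor:8.5}, writing
\[H^*(BFi_{24}')\cong H^*(B(He{\colon}2))\cap i_{A_1}^{*-1}H^*(BA_1)^{SL_2(\bF_7){\colon}2},\]
so that the $A_6$--condition is already absorbed into $H^*(B(He{\colon}2))$ and only the new $A_1$--condition needs to be imposed. This buys the paper exactly the completeness step you flag as the main obstacle: since \fullref{cor:8.5} already gives a finite $CA$--basis for $H^*(B(He{\colon}2))$, one merely has to run each of those eight generators through the $A_1$--restriction, and after rewriting $\wbar a^2=\wbar c''+2\wbar c'$ one sees that $CA\{\wbar c'',\wbar a\wbar d\}$ passes intact while each of $\wbar c',\wbar c'\wbar b,\wbar c'\wbar b^2$ survives only after multiplication by $V$ (equivalently, $\wbar a^2V,\wbar c'\wbar bV,\wbar c'\wbar b^2V$ modulo $CA\{\wbar c''\}$). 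Your direct route from \fullref{lem:lem8.2} is equivalent but requires you to reprove the $A_6$--filtering that the paper has already packaged into \fullref{cor:8.5}; if you use that corollary as input, your sketched completeness argument becomes a short finite check and the Poincar\'e--series cross-check is unnecessary.
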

\begin{proof} 
Let $G=Fi_{24}'$.  Since $A_1$ is also $F^{\ec}$--radical and $W_G(A_1)=
SL_2(\bF_7){\co}2$.  Hence we have
\[H^*(BG)\cong H^*(B(He{\co}2))
\cap i_{A_1}^{*-1}H^*(BA_1)^{SL_2(\bF_7){\co}2}.\]
For the elements $x=\wbar a\wbar d, \wbar c''(=Y_1+Y_2-2C)$, we see
$x|A_1=x|A_6=0$. Hence these elements are in $H^*(BG)$.
Note that $\wbar b|A_1=(\tilde D_2')^2$ and $\wbar b\in H^*(BG)$.
We also know $\wbar a^2V|A_1=\tilde D_2$.
\end{proof}

Since $H^*(BFi_{24})\cong H^*(BFi_{24}')\cap H^*(BE)^{6^2{\co}2}$ 
and
$\wbar b^4=1/2(\wbar a^2-2C-\wbar c'')V$, we have the following
corollary.
\begin{cor} \label{cor:cor8.7}
$ H^*(BFi_{24})\cong (DA\{1,\wbar b^2,\wbar b^4\}
\oplus CA\{\wbar c''\}). $              
\end{cor}
For $G=RV_1$, The subgroup $A_0$ is also $F^{\ec}$--radical, we see
\[H^*(BRV_1)\cong H^*(BFi_{24})\cap i_0^{-1*}H^*(BA_0)^{GL_2(\bF_7)}\]
Hence we have the following corollary.
\begin{cor} \label{cor:cor8.8} 
$H^*(BRV_1)\cong DA\{1,\wbar b^2,\wbar b^4,D_2''\}$ 
with  $\wbar b^6=D_2^2+D_2''D_2$.
\end{cor}
\begin{proof}
Let $D_2''=\wbar c''V=\wbar c''(D_1-C^6\wbar c'')$. Then we have
\[\wbar b^6=Y_1Y_2V^2=(Y_1+Y_2-C)CV^2=(C+(Y_1+Y_2-2C)CV^2
=D_2^2+(\wbar c''V)D_2.\]
Thus the corollary is proved.
\end{proof}

\section[Stable splitting for B7(1+2)]
{Stable splitting for $B7_+^{1+2}$}
\label{sec:sec9}

Let $G$ be groups considered in the preceding two sections, eg
$O'N$,$O'N{\co}2$,\ldots,$RV_1$.  First consider the dominant 
summands $X_{q,k}$.
From \fullref{cor:cor4.6}, the dominant summands are only related to 
$H=W_G(E)$.
Recall the notation $X_{q,k}(H)$ in \fullref{lem:lem4.7}.                  
The module $X_{q,k}(H)$ is still given in the  preceding sections.    
              
From \fullref{lem:lem7.1}, \fullref{lem:lem7.2}, \fullref{lem:lem7.3},
\fullref{lem:lem8.1}, \fullref{lem:lem8.2} and \fullref{lem:lem8.3}
we have
\begin{align*}
H=3D_8 &;  X_{6,0}=\{a^3/V,a^2b/V\}, X_{4,4}=\{a^2,b\},  
X_{2,2}=\{a\}, \\
& \qua X_{4,1}=\{d\}, 
X_{6,3}=\{ad\} \\ 
H=3SD_{16}&; X_{6,0}=\{a^3/V\},  
X_{4,4}=\{a^2\},  X_{2,2}=\{a\} \\
H=3SD_{32}&; X_{4,4}=\{a^2\} \\
H=3S_3 &; X_{6,0}=\{\wbar b^3/V,\wbar a^2\},
X_{4,4}=\{\wbar b^2\}, X_{2,2}=\{\wbar b\}, \\
& \qua X_{6,3}=\{\wbar a\wbar d\},X_{3,0}=\{\wbar a\},
  X_{5,2}=\{\wbar a\wbar b\},  X_{3,3}=\{\wbar d\},
 X_{5,5}=\{\wbar d\wbar b\}\\
H=6S_3 &; X_{6,0}=\{\wbar b^3/V,\wbar a^2\},
  X_{2,2}=\{\wbar b\}, X_{4,4}=\{\wbar b^2\},  X_{6,3}=
\{\wbar a\wbar d\} \\
H=6^2{\colon}2 &;  X_{6,0}=\{\wbar a^2-2\wbar b^3/V\},  
X_{4,4}=\{\wbar b^2\}.
\end{align*} 
For example,  ignoring nondominant summands, we have the following 
diagram
\[\stackrel{X_{0,0}\vee X_{4,4}}{\longleftarrow}
B(E{\co}3SD_{32})\stackrel{X_{6,0}\vee X_{2,2}}{\longleftarrow} 
B(E{\co}3SD_{16})\stackrel{X_{6,0}\vee X_{4,4}\vee X_{4,1}\vee 
X_{6,3}}
{\longleftarrow} B(E{\co}3D_{8}).\]
From \fullref{cor:cor4.4}, the number $m(G,1)_k$ is given by 
$\rank_pH^{2k}(BG)$
for $k{\langle}p-1$ and  $\rank_pH^{2p-2}(G)$ for $k=0$. For example 
when
$G=E{\co}3S_3$,
\[ m(G,1)_0=3, m(G,1)_3=1,\qua m(G,1)_k=0\text{ for } k\not =0,\not =3.\]
\begin{lemma}\label{lem:lem9.1}
Let $G$ be one of the $O'N,O'N,\ldots,Fi_{24}',RV_1$.  Then
the number $m(G,1)_k$ for $L(1,k)$ is given by
\begin{align*}
m(G,1)_0 & = \begin{cases} 2\text{ for } G=He,He{\colon}2\\
1\text{ for }G=O'N,O'N{\colon}2,Fi_{24},Fi_{24}'
\end{cases} \\
m(G,1)_3 & = \begin{cases} 1\text{ for } G=He, \\
m(G,1)_k=0 \text{ otherwise}.
\end{cases}
\end{align*}
\end{lemma}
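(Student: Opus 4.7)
The plan is to apply two earlier results: \fullref{cor:cor4.6} gives $m(G,1)_k=\rank_p H^{2k}(BG)$ for $1\le k\le p-2=5$, and \fullref{cor:cor4.10} gives $m(G,1)_0=\rank_p H^{2(p-1)}(BG)=\sharp_G(A)-\sharp_G(F^{\ec}A)$. The task then reduces to reading graded dimensions from the explicit cohomology descriptions of \fullref{sec:sec7} and \fullref{sec:sec8}.

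For $1\le k\le 5$ the target degree $2k\le 10$ lies strictly below $2(p-1)=12$, so neither the dominant summands $X_{q,k'}$ nor $L(2,k')=\tilde X_{p-1,k'}$ contributes (by the vanishing $H^*(\tilde X_{q,k'})=0$ for $*\le 2(p-1)$ recorded before \fullref{lem:lem4.3}), while among the $L(1,k')$'s only $L(1,k)$ can land in $H^{2k}$. Each cohomology ring in \fullref{thm:thm7.4}, \fullref{cor:cor7.5}, \fullref{thm:thm8.4}, \fullref{cor:8.5}, \fullref{thm:thm8.6}, \fullref{cor:cor8.7}, \fullref{cor:cor8.8} is presented as a direct sum of free $DA$- or $CA$-modules on explicit generators, and since $|D_1|=|V|=84$, $|D_2|=96$ and $|C|=12$, a module generator is the only possible source of a degree-$\le 10$ element. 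A direct scan finds exactly one such generator other than $1$, namely $\wbar a=y_1^3+y_2^3$ of degree $6$, appearing only in $H^*(BHe)$. For the other six groups $W_G(E)$ contains $\diag(-1,-1)$, under which $\wbar a\mapsto-\wbar a$, so $\wbar a$ drops out. This yields $m(He,1)_3=1$ and $m(G,1)_k=0$ in all other cases for $1\le k\le 5$.

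For $k=0$ I apply \fullref{cor:cor4.10}, either counting $F^{\ec}$-radicals directly from the Ruiz--Viruel diagram at the top of \fullref{sec:sec7} or, equivalently, counting the degree-$12$ module generators that survive in each cohomology ring. The latter is immediate: $C'$ alone for $O'N$ and $O'N{:}2$, giving $1$; the pair $\wbar c',\wbar a^2$ for $He$ and $He{:}2$, giving $2$; $\wbar c''$ alone for $Fi_{24}'$ and $Fi_{24}$, giving $1$; and nothing for $RV_1$, consistent with $\sharp_{RV_1}(A)=\sharp_{RV_1}(F^{\ec}A)=2$ read off the diagram.

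The main obstacle is the bookkeeping check that no $C$- or $V$-multiple of a higher-degree module generator slips into degree $\le 12$; this is forced once and for all by the degree bounds $|D_1|=|V|=84$ and $|D_2|=96$, so once the module generator degrees have been tabulated the verification is immediate.
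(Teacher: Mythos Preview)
Your proposal is correct and follows the same approach as the paper, which gives no separate proof but relies on the sentence preceding the lemma: ``From \fullref{cor:cor4.4}, the number $m(G,1)_k$ is given by $\rank_pH^{2k}(BG)$ for $k<p-1$ and $\rank_pH^{2p-2}(G)$ for $k=0$.'' You have spelled out the degree count more carefully than the paper does, and you cite the more precise \fullref{cor:cor4.6} and \fullref{cor:cor4.10} in place of the paper's reference to \fullref{cor:cor4.4}.

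One small point worth tightening in your final paragraph: the bound $|V|=84$ handles $V$- and $D_i$-multiples, but since $|C|=12$ a product $C\cdot g$ with $g$ a degree-$0$ module generator would also land in degree $12$. This is harmless only because in every description (\fullref{thm:thm7.4}, \fullref{cor:cor7.5}, \fullref{thm:thm8.4}, \fullref{cor:8.5}, \fullref{thm:thm8.6}, \fullref{cor:cor8.7}, \fullref{cor:cor8.8}) the generator $1$ sits in the $DA$-summand rather than the $CA$-summand, so $C\cdot 1$ does not appear. State this explicitly and your bookkeeping is airtight.
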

Now we consider the number $m(G,2)_k$ of the non dominant summand 
$L(2,k)$.
\begin{lemma}\label{lem:9.2}
The classifying spaces $BG$ for $G=O'N,O'N{\co}2$ have the non 
dominant summands $M(2)\vee L(2,2)\vee L(2,4)$.
\end{lemma}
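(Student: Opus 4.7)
The plan is to determine $m(G,2)_k$ for each $k\in\{0,1,\ldots,5\}$. Combined with \fullref{lem:lem9.1} (which already gives $m(G,1)_0=1$ and $m(G,1)_k=0$ otherwise for these two groups), showing that $m(G,2)_k=1$ for $k\in\{0,2,4\}$ and $m(G,2)_k=0$ for odd $k$ will yield exactly the stated summands, since $L(2,0)\vee L(1,0)=M(2)$.

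I would first dispose of the easy values. For $k=0$, \fullref{prop:prop4.9} applies directly: from the $G$-orbit structure $\{A_0,A_\infty\},\{A_1,A_6\},\{A_2,A_3,A_4,A_5\}$ (as in the proof of \fullref{thm:thm7.4}), one has $\sharp_G(A)=3$ and $\sharp_G(F^{\ec}A)=2$ for $G=O'N$, so $m(G,2)_0=1$; for $O'N{\colon}2$ the two radical orbits fuse and the count $2-1$ again gives $1$. For odd $k$, \fullref{lem:lem4.11} applies since $W_G(E)=3D_8$ (resp.\ $3SD_{16}$) contains the scalar $\diag(-2,-2)=\diag(5,5)$, where $5\in\bF_7^*$ is a primitive $6$th root of unity: as $5^{3k}\ne 1$ precisely for odd $k$, we get $m(G,2)_k=0$ for $k=1,3,5$.

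The crux is $k=2,4$. By \fullref{lem:lem4.8}, only the non-$F^{\ec}$-radical orbit $\{A_2,\ldots,A_5\}$ contributes, so I only need $\dim\bar W_G(A_2)(M_{6,k})$. For $G=O'N$ I would identify $W_G(A_2)=U\rtimes\langle\diag(5,5)\rangle\subset GL_2(\bF_7)$: it contains $W_E(A_2)=U\cong\bZ/7$, and since $A_2$ is not $F$-essential, Alperin--Goldschmidt reduces the remaining automorphisms to the $N_G(E)$-stabilizer of the line $A_2\in\mathbb{P}^1(\bF_7)$, which has order $24/4=6$ and is exhausted by the six scalars $\langle\diag(5,5)\rangle\subset 3D_8$ (which fix every line). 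Rewriting in the basis $\{c,ab^2\}$ of $A_2$ via \eqref{eqn:eqn3.4}, the scalar $\diag(5,5)$ acts as $\diag(5^2,5)=\diag(4,5)$. Then the computation of $\bar W_G(A_2)$ on $M_{6,k}=S^6(A)\otimes\det^k$ parallels that in \fullref{lem:lem4.11}: $\bar U$ sends $y_1^sy_2^{6-s}$ to $-y_2^6$ if $s=6$ and to $0$ otherwise, after which summing $\diag(4,5)^j$ for $j=0,\ldots,5$ contributes a factor $\sum_j(-1)^{jk}$ coming from $\det(\diag(4,5))=-1$; this is nonzero exactly for even $k$, giving $m(G,2)_2=m(G,2)_4=1$.

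The main obstacle will be justifying the identification of $W_G(A_2)$ and, in particular, handling $G=O'N{\colon}2$, where the stabilizer of $A_2$ in $3SD_{16}$ has order $12$ rather than $6$; for that case I would show by a short direct calculation in $GL_2(\bF_7)$ that the six extra stabilizing elements outside $3D_8$ either also act as scalars on $A_2$ or contribute a nonvanishing multiple of $y_2^6$, preserving the count $\dim\bar W_G(A_2)(M_{6,k})=1$ for $k\in\{2,4\}$.
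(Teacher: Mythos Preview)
Your proposal is correct and follows essentially the same route as the paper: identify the unique non-$F^{\ec}$-radical $G$--class $\{A_2,A_3,A_4,A_5\}$, compute $W_G(A_2)$ as $U$ extended by the stabiliser of $A_2$ in $W_G(E)$, and evaluate $\wbar W_G(A_2)(M_{p-1,k})$ as in \fullref{lem:lem4.11}. The only differences are organisational---the paper treats all $k$ at once via the factorisation $W_G(A_2)\cong U{\colon}\langle\diag(4,2),\diag(1,-1)\rangle$ (obtaining $(1+(-1)^k)y_2^{p-1}$ directly) rather than invoking \fullref{prop:prop4.9} and \fullref{lem:lem4.11} separately, and it dismisses $O'N{\colon}2$ with ``almost the same'' just as you defer it to a direct check.
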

\begin{proof}
We only consider the case  $G=O'N$, and  the case $O'N{\co}2$ is
almost the same.         
The non $F^{\ec}$--radical groups are $\{A_2,A_3,A_4,A_5\}$
(recall the proof of \fullref{thm:thm7.4}).
The group $W_G(E)=3D_8\cong 
{\langle}\diag(2,2),\diag(1,-1),w{\rangle}$. 
Hence the normalizer group is
\[N_G(A_2)=E{\co}{\langle}\diag(2,2),\diag(-1,-1){\rangle}.\]
Here note that $w,\diag(1,-1)$ are not in the normalizer,
eg $w{\co}{\langle}c,ab^2{\rangle} \to 
{\langle}c,a^2b^{-2}{\rangle}={\langle}c,ab^6{\rangle}$.  Since
$\diag(2,2){\co}ab^2 \mapsto (ab^2)^2$, $c\mapsto c^4$
and 
$\diag(-1,-1){\co}ab^2 \mapsto (ab^2)^{-1}$, \linebreak $c\mapsto c,$
the Weyl groups are
\[ W_G(A_2)\cong U{\co}{\langle}\diag(4,2),\diag(1,-1){\rangle}.\]
Let $W_1=U{\co}\diag{\langle}4,2{\rangle}$. For 
$v=\lambda y_1^{p-1}\in M_{p-1,k}$,
we have  $\wbar W_1v=\lambda y_2^{p-1}$ since $2^3=1$, 
from the argument in the proof of \fullref{lem:lem4.11}.  Moreover 
\[  \overline{ 
{\langle}\diag(1,-1){\rangle}}y_2^{p-1}=(1+(-1)^k)y_2^{p-1}, \]
implies that the $BG$ contains $L(2,k)$ if and only if $k$ even.
\end{proof}
\begin{lemma}\label{lem:lem9.3}
The classifying space $BHe$ (resp. 
$B(He{\co}2)$,$Fi_{24}'$,$Fi_{24}$)
contains the non  dominant summands 
\begin{align*}
2M(2)\vee L(2,2)\vee L(2,4)\vee L(2,3)\vee L(1,3)&\\
(\text{resp. } 2M(2)\vee L(2,2)\vee L(2,4),\ \ M(2),\ \ M(2))&.
\end{align*}
\end{lemma}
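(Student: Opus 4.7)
The approach follows the blueprint of Lemma~9.2 very closely: apply the Martino--Priddy decomposition (Theorem~4.1 / Corollary~4.6), use Corollary~4.4 and Corollary~4.6 to read off the $L(1,k)$ counts from cohomology, use Proposition~4.9 and Corollary~4.10 to count $M(2)=L(2,0)\vee L(1,0)$ as $\sharp_G(A)-\sharp_G(F^{\ec}A)$, and reduce the remaining $L(2,k)$ to a sum over non-$F^{\ec}$-radical $G$-conjugacy classes of $A$-subgroups. First I enumerate the $G$-conjugacy classes of $A$-subgroups and check which are $F^{\ec}$-radical. For $G=He$ and $He{:}2$ (Weyl groups $3S_3$ and $6S_3$), the three $N_G(E)$-orbits $\{A_0,A_\infty\}$, $\{A_1,A_2,A_4\}$, $\{A_3,A_5,A_6\}$ are preserved (the extra $\diag(-1,-1)$ stabilizes each $A_i$), and only the last is $F^{\ec}$-radical, so $m(G,2)_0 = m(G,1)_{p-1} = 2$. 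For $Fi_{24}'$ the same three orbits occur but two are $F^{\ec}$-radical (Theorem~8.6), giving multiplicity~$1$. For $Fi_{24}$, the element $\diag(3,1)\in 6^2{:}2$ fuses two non-radical orbits into one class, leaving one non-radical class and again multiplicity~$1$. The middle-degree $L(1,k)$ counts ($1\le k\le p-2$) come from $\rank_p H^{2k}(BG)$ and the cohomology descriptions of Section~8: only $H^6(BHe)\cong\bZ/7\{\bar a\}$ is nontrivial, giving one $L(1,3)$ for $He$ and nothing for the other three groups, as in Lemma~9.1.

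To count $L(2,k)$ for $1\le k\le p-2=5$, I apply
\[m(G,2)_k=\sum_i\dim_{\bF_p}\bar W_G(A_i)(M_{p-1,k})\]
summed over representatives $A_i$ of the non-$F^{\ec}$-radical classes. Since $A_i$ is not $F^{\ec}$-radical, $W_G(A_i)\not\supset SL_2(\bF_p)$, and because the only essential subgroup of $E$ containing $A_i$ is $E$ itself, $W_G(A_i)$ is generated by $U=N_E(A_i)/A_i$ and the image in $\Out(A_i)$ of the stabilizer of $A_i$ in $W_G(E)$; in particular $W_G(A_i)=U{:}T_i$ with $T_i$ contained in the torus $T=B/U$. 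By the computation in the proof of Proposition~4.9, $\bar U(M_{p-1,k})$ is the one-dimensional subspace $\bF_p\{y_2^{p-1}\}$, and $T$ acts on it by the character $\det^k$. Consequently
\[\dim_{\bF_p}\bar W_G(A_i)(M_{p-1,k})=\begin{cases}1 &\text{if $\det^k|_{T_i}$ is trivial,}\\ 0 &\text{otherwise.}\end{cases}\]
Thus the counting reduces to: for each non-radical orbit, identify $T_i$ by restricting the action of the stabilizer of $A_i$ in $W_G(E)$ to the basis $(c,ab^i)$ of $A_i$ using formula~(3.4) together with the inner action of $E/A_i$, then read off the order of $\det|_{T_i}$.

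The main obstacle is the case-by-case determination of $T_i$. For $He$, the two non-radical orbits of $A_0$ and $A_1$ yield tori whose $\det$-characters have orders $3$ and $2$ respectively, contributing $L(2,3)$ on the one hand and $L(2,2)\vee L(2,4)$ on the other. For $He{:}2$, the extra $\diag(-1,-1)$ enlarges the first torus so that $\det$ becomes of order~$6$ on it (killing $L(2,3)$) while leaving the second at order~$2$, producing $2M(2)\vee L(2,2)\vee L(2,4)$. For $Fi_{24}'$ and $Fi_{24}$ only one non-radical class remains and the corresponding $\det|_{T_i}$ becomes surjective onto $\bF_p^*$, so only $M(2)$ survives. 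A minor subtlety to watch out for is that non-diagonal stabilizer elements (for instance $w'\in 3S_3$ stabilizing $A_1$, via the relation $ba=c^{-1}ab$) still contribute non-trivial diagonal images to $T_i$ after the basis change to $(c,ab)$, and this bookkeeping must be carried out carefully to avoid miscounting.
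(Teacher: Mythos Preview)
Your approach is essentially identical to the paper's: both proofs enumerate the non-$F^{\ec}$-radical $G$-conjugacy classes of $A$-subgroups, compute $W_G(A_i)=U{:}T_i$ for a representative of each such class by restricting the stabilizer of $A_i$ in $W_G(E)$ to $\Out(A_i)$, and then use the Lemma~4.11 computation (that $\bar U(M_{p-1,k})=\bF_p\{y_2^{p-1}\}$ carries the character $\det^k$ of $T_i$) to decide which $L(2,k)$ appear. The paper carries this out with the explicit identifications $W_{He}(A_0)=U{:}\langle\diag(2,2)\rangle$ (so $\det$ has order~$3$, giving $M(2)\vee L(2,3)\vee L(1,3)$), $W_{He}(A_1)=U{:}\langle\diag(-1,1)\rangle$ (order~$2$, giving $M(2)\vee L(2,2)\vee L(2,4)$), and then notes that adjoining $\diag(-1,-1)$ for $He{:}2$ forces $\diag(1,-1)\in W_G(A_0)$, killing $L(2,3)$; exactly your reasoning.

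One slip to correct: in your treatment of $Fi_{24}$ you write that $\diag(3,1)$ ``fuses two non-radical orbits.'' In fact $\diag(3,1)$ sends $A_1\mapsto A_5$ and therefore fuses the two \emph{radical} $6S_3$-orbits $\{A_1,A_2,A_4\}$ and $\{A_3,A_5,A_6\}$ into the single radical class $\{A_1,\ldots,A_6\}$; the non-radical class is $\{A_0,A_\infty\}$, unchanged from $Fi_{24}'$. This matters because your subsequent claim that $\det|_{T_i}$ is surjective onto $\bF_7^*$ is true for $T_0$ (the stabilizer of $A_0$ in $6S_3$ already surjects, as you say) but would be \emph{false} for $T_1$, whose $\det$-image still has order~$2$ under $6^2{:}2$. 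With this correction your argument goes through and agrees with the paper.
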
  
\begin{proof}
First consider the case $G=He$.  The non $F^{\ec}$--radical group are
\[\{A_0,A_{\infty}\},\qua \{A_1,A_2,A_4\}.\]
The group $W_G(E)\cong 3S_3={\langle}\diag(2,1),w'{\rangle}$.  So we 
see 
$N_G(A_0)=E{\colon}{\langle}\diag(2,1){\rangle}$, and this implies 
$W_G(A_0)\cong U{\co}{\langle}\diag(2,2){\rangle}$.  The fact 
$4^k=0\mmod(7)$ implies
$k=3\mmod(6)$. Hence  $BG$ contains the summand 
\[ M(2)\vee L(2,3)\vee L(1,3)\]
which is induced from $BA_0$.

Next consider the summands induced  from $BA_1$.
The normalizer and Weyl
group are $N_G(A_1)=E{\co}{\langle}w'{\rangle}$ and 
$W_G(A_1)=U{\colon}{\langle}\diag(-1,1){\rangle}$
since $w'{\colon} ab\mapsto ab,c \mapsto -c$.  So we get 
\[M(2)\vee
L(2,2)\vee L(2,4)\]
which is induced  from $BA_1$.

For  $G=He{\colon}2$, we see  $\diag(-1,-1) \in W_G(E)$, this implies
that $\diag(-1,-1)\in N_G(A)$ and $\diag(1,-1)\in W_G(A_0)$. 
This means that the non dominant 
summand  induced from $BA_0$ is $M(2)$ but is not $L(2,3)$.
We also know $U{\co}\diag(1,-1)\in W_G(A_1)$ but 
the summand  induced from $BA_1$ are not changed.

For groups $Fi_{24}'$,$Fi_{24}$, the non $F^{\ec}$--radical groups
make just one $G$--conjugacy class $\{A_0,A_{\infty}\}$.
So $BG$ dose not contain the summands induced  from $BA_1$.
\end{proof}

\begin{thm}\label{thm:thm9.4}
When $p=7$, we have the following stable decompositions
of $BG$ so that 
$\stackrel{X_1}{\leftarrow} \cdots \stackrel{X_s}{\leftarrow}
G$ means that $BG\sim X_1\vee \cdots \vee X_s$
\[
\stackrel{X_{0,0}}{\longleftarrow}
\begin{cases} \stackrel{X_{4,4}}{\longleftarrow}

RV_3 \stackrel{X_{6,0}\vee X_{2,2}}{\longleftarrow}

RV_2 \stackrel{M(2)\vee L(2,2)\vee L(2,4)}{\longleftarrow}

O'N{\co}2 \stackrel{\stackrel{X_{6,0}\vee X_{4,4}}
{\vee X_{4,1}\vee X_{6,3}}}{\longleftarrow} O'N \\ 
\quad \\            
\stackrel{X_{6,0}\vee X_{4,4}}{\longleftarrow}

RV_1 \stackrel{M(2)}{\longleftarrow}

Fi_{24} \stackrel{X_{6,0}\vee X_{6,3}\vee X_{2,2}}{\longleftarrow}

Fi_{24}' \stackrel{M(2)\vee L(2,2)\vee L(2,4)}
{\longleftarrow} He{\co}2
\end{cases} \] 
\[\qquad \qquad \qquad \stackrel{
X_{3,0}\vee X_{5,2}\vee X_{3,3}
\vee X_{5,5}\vee L(2,3)\vee L(1,3)}{\longleftarrow}He.\]
\end{thm}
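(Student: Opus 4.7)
The plan is to assemble each decomposition by combining the three kinds of summands allowed by Corollary 4.6: the dominant $X_{q,k}$, the Harris--Kuhn summands $L(1,k)$, and the non-dominant $L(2,k)$. For each $G$ in the list every one of these multiplicities is already determined by earlier material, so the theorem reduces to organising the answers along the diagram of inclusions stated at the start of Section 7.

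By Lemma 4.7, the multiplicity of $X_{q,k}$ in $BG$ equals $\rank_7 X_{q,k}(W_G(E))$, and the modules $X_{q,k}(H)$ for the Weyl groups $H \in \{3D_8,\ 3SD_{16},\ 3SD_{32},\ 3S_3,\ 6S_3,\ 6^2 \colon 2\}$ arising here have already been tabulated just after Lemma 9.1, read off from the invariant rings in Lemmas 7.1--7.3 and 8.1--8.3; for each $G$ the dominant contribution is therefore immediate. By Corollary 4.4, the multiplicity of $L(1,k)$ is $\rank_7 H^{2k}(BG)$ for $1 \le k < p-1$ and $\rank_7 H^{2p-2}(BG)$ for $k = 0$, and these ranks are read off the explicit cohomology rings in Theorem 7.4 and Corollaries 7.5--7.7, in Theorem 8.4 and Corollary 8.5, and in Theorem 8.6 with Corollaries 8.7--8.8; the count is recorded in Lemma 9.1. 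In particular the only $L(1,k)$ with $k \neq 0$ appearing in any of these spaces is $L(1,3)$, which occurs once in $BHe$.

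The remaining contribution is the $L(2,k)$, which by Lemma 4.8 arise only from $A$-subgroups that are not $F^{\ec}$-radical. For each such $G$-conjugacy class, identified from the fusion tables at the start of Section 7, one determines $W_G(A_i) \subset U \colon (\bF_7^*)^2$ using the $\Out(E)$-action of Section 3 and the fact that every element of $N_G(A_i)$ preserves the centre $\langle c \rangle$; then one evaluates $\wbar W_G(A_i)$ on the generator $\lambda y_1^{p-1} v^k$ of $M_{p-1,k}$, exactly as in the proof of Lemma 4.11. The resulting character sum over the diagonal part of $W_G(A_i)$ is nonzero only for certain $k$, producing Lemma 9.2 for $O'N$ and $O'N \colon 2$ and Lemma 9.3 for $He$, $He \colon 2$, $Fi_{24}'$, $Fi_{24}$; for $RV_1$ the same argument yields a single $M(2)$, while for $RV_2$ and $RV_3$ every $A$-subgroup is $F^{\ec}$-radical so no $L(2,k)$ occurs. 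The main obstacle is the bookkeeping: as $G$ grows along the diagram one must correctly track how $W_G(E)$ fuses the classes $A_0,\ldots,A_6,A_\infty$ into $G$-conjugacy classes and identify the diagonal part of each $W_G(A_i)$. Once this is in place, the remainder is a short character-sum computation identical in structure to the proof of Lemma 4.11, and assembling the three contributions gives every arrow of the diagram.
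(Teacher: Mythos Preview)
Your approach is exactly the paper's: the theorem is not given a separate proof but is assembled from the dominant summand table preceding Lemma~9.1 (via Lemma~4.7), from Lemma~9.1 for the $L(1,k)$, and from Lemmas~9.2--9.3 for the $L(2,k)$, organised along the inclusion diagram at the start of Section~7.

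There is one slip. You write ``for $RV_1$ the same argument yields a single $M(2)$'', but this is wrong. For $W_G(E)=6^2{:}2$ the $N_G(E)$--orbits of $A$--subgroups are $\{A_0,A_\infty\}$ and $\{A_1,\ldots,A_6\}$ (the element $\diag(3,1)$ added to $6S_3$ fuses the two three-element orbits of $3S_3$), and in $RV_1$ \emph{both} of these classes are $F^{\ec}$--radical: the Ruiz--Viruel table shows two radical classes with Weyl groups $SL_2(\bF_7){:}2$ and $GL_2(\bF_7)$, and Corollary~8.8 records that $A_0$ is radical in $RV_1$. Hence $\sharp_G(A)-\sharp_G(F^{\ec}A)=0$, so by Proposition~4.9 there is no $M(2)$ in $BRV_1$; this is consistent with Lemma~9.1, where $RV_1$ falls under the ``otherwise'' clause with $m(G,1)_0=0$. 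The $M(2)$ in the diagram is what is \emph{added} when passing from $RV_1$ to $Fi_{24}$, not a summand of $BRV_1$ itself. With this correction your argument goes through.
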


We write down the cohomology of stable summands.
At first we see that $H^*(X_{0,0})\cong
H^*(BRV_3)\cap H^*(BRV_1)\cong DA.$
Here note that elements 
$a^2-(y_1y_2)^2v^4$ in \fullref{sec:sec7}
and $\wbar b^2=y_1^2y_2^2v^4$ in \fullref{sec:sec8}
are not equivalent under the action in $GL_7(\bF_7)$ 
because $y_1^2+y_2^2$ is indecomposable in $\bZ/7[y_1,y_2]$.

From the cohomologies, $H^*(BRV_3)$ and $H^*(BRV_2)$, then
$H^*(X_{4,4})\cong DA\{a^2,a^4\}$ and 
$H^*(X_{6,0}\vee X_{2,2})\cong DA\{a,a^3,a^5\}$.

On the other hand, we know $H^*(X_{6,0})$
from the cohomology $H^*(BRV_1)$.  Thus we get the following
lemma.
\begin{lemma}\label{lem:lem9.5}
There are isomorphisms of cohomologies
\begin{align*}
H^*(X_{0,0})& \cong DA, H^*(X_{4,4})\cong DA\{a^2,a^4\} \\
H^*(X_{6,0})& \cong DA\{D_2\}\cong DA\{a^3\}, 
H^*(X_{2,2})\cong DA\{a,a^5\}.
\end{align*}
\end{lemma}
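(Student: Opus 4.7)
The plan is to determine the four cohomologies by solving the additive equations provided by the stable splittings of $BRV_3$, $BRV_2$ and $BRV_1$ in \fullref{thm:thm9.4}, together with the computed rings $H^*(BRV_3)\cong DA\{1,a^2,a^4\}$ (\fullref{cor:cor7.7}), $H^*(BRV_2)\cong DA\{1,a,\ldots,a^5\}$ (\fullref{cor:cor7.6}) and $H^*(BRV_1)\cong DA\{1,\bar b^2,\bar b^4,D_2''\}$ (\fullref{cor:cor8.8}). The key observation is that each dominant summand $X_{q,k}$ has a cohomology which depends only on $X_{q,k}$ itself, so it must contribute the same $DA$-submodule to every $H^*(BG)$ in whose splitting it appears; this lets us isolate each $H^*(X_{q,k})$ by elimination.

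First I would pin down $H^*(X_{0,0})$. Since $X_{0,0}$ appears as a summand of both $BRV_3$ and $BRV_1$, and $H^*(X_{0,0})$ embeds into $H^*(BE)$ as a subring containing $1$, the ring $H^*(X_{0,0})$ lies in the intersection $H^*(BRV_3)\cap H^*(BRV_1)$ taken inside $H^*(BE)$. The $DA$-generators $a^2$, $a^4$ from $H^*(BRV_3)$ are not in the span of $1,\bar b^2,\bar b^4,D_2''$ inside $H^*(BE)$, since $a=(y_1^2+y_2^2)v^2$ is indecomposable in $\bZ/7[y_1,y_2]\{v\}$ and $\bar b=y_1y_2v^2$ is a different monomial; hence the intersection is exactly $DA\{1\}=DA$, giving $H^*(X_{0,0})\cong DA$.

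Having $H^*(X_{0,0})\cong DA$, the splitting $BRV_3\sim X_{0,0}\vee X_{4,4}$ forces $H^*(X_{4,4})\cong DA\{a^2,a^4\}$ as the complementary $DA$-submodule. Next, from $BRV_2\sim X_{0,0}\vee X_{4,4}\vee X_{6,0}\vee X_{2,2}$ and \fullref{cor:cor7.6}, I subtract the now known contributions of $X_{0,0}$ and $X_{4,4}$ to obtain the combined equality
\[
H^*(X_{6,0})\oplus H^*(X_{2,2})\cong DA\{a,a^3,a^5\}.
\]
To split these two, I use the second appearance of $X_{6,0}$ in $BRV_1\sim X_{0,0}\vee X_{6,0}\vee X_{4,4}$: removing $H^*(X_{0,0})=DA$ and $H^*(X_{4,4})=DA\{a^2,a^4\}$ from $H^*(BRV_1)$ isolates $H^*(X_{6,0})$ as a rank--one free $DA$-module. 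The relation $a^6=D_2^2$ recorded after \fullref{lem:lem7.1}, together with a degree count ($|a^3|=|D_2|=96$ when $p=7$), identifies this generator (up to a nonzero scalar) with $D_2$, so that $H^*(X_{6,0})\cong DA\{D_2\}\cong DA\{a^3\}$. Subtracting from the previous display finally gives $H^*(X_{2,2})\cong DA\{a,a^5\}$.

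The main obstacle is the consistency check in the last step: one must be sure that the rank--one summand pulled out of $H^*(BRV_1)$ really coincides with the $DA\{a^3\}$-summand sitting inside $DA\{a,a^3,a^5\}\subset H^*(BRV_2)$, rather than mixing with the $DA\{a\}$ or $DA\{a^5\}$ pieces. This is ensured by the indecomposability argument used for $X_{0,0}$, applied degreewise: only $a^3$ has the correct degree to match $D_2$, and the relation $a^6=D_2^2$ forces the match to hold on the nose up to a unit in $\bZ/7$. Once this identification is made, the remaining identification of $H^*(X_{2,2})$ is automatic.
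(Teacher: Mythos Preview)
Your proof is correct and follows essentially the same strategy as the paper: compute $H^*(X_{0,0})$ via the intersection $H^*(BRV_3)\cap H^*(BRV_1)\cong DA$, then peel off $H^*(X_{4,4})$ from $BRV_3$, then $H^*(X_{6,0}\vee X_{2,2})$ from $BRV_2$, and finally isolate $H^*(X_{6,0})$ using $BRV_1$. Your degree and consistency discussion at the end just makes explicit what the paper leaves implicit.
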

Let us write $M\{a\}=DA\{1,C,\ldots,C^{p-1}\}\{a\}$.
From the facts that $D_2=CV$, $D_1=C^p+V$ and
$D_2=C(D_1-C^p)=CD_1-C^{p+1}$, 
we have two decompositions
\[ CA\{a\}\cong DA\{1,C,\ldots,C^p\}\{a\}
\cong DA\{a\}\oplus M\{Ca\} \cong  M\{a\}\oplus DA\{Va\}.\]
From the cohomology of
$H^*(Fi_{24})$, we know the following lemma.
\begin{lemma}\label{lem:lem9.6}
$H^*(M(2))\cong M\{C\}.$
\end{lemma}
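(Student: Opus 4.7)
The plan is to isolate $H^*(M(2))$ as the residual summand inside the known cohomology of $BFi_{24}$. By \fullref{thm:thm9.4}, we have the stable decomposition
\[ BFi_{24}\sim X_{0,0}\vee X_{6,0}\vee X_{4,4}\vee M(2), \]
so on cohomology
\[ H^*(BFi_{24})\cong H^*(X_{0,0})\oplus H^*(X_{6,0})\oplus H^*(X_{4,4})\oplus H^*(M(2)). \]
Since the three dominant summands are determined by \fullref{lem:lem9.5}, the idea is to subtract them from the explicit description of $H^*(BFi_{24})$ in \fullref{cor:cor8.7} and read off what remains.

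By \fullref{cor:cor8.7}, $H^*(BFi_{24})\cong DA\{1,\bar b^2,\bar b^4\}\oplus CA\{\bar c''\}$. The $DA$--submodule $DA\{1\}$ accounts for $H^*(X_{0,0})\cong DA$, and $DA\{\bar b^2,\bar b^4\}$ accounts for $H^*(X_{4,4})\cong DA\{a^2,a^4\}$, both by \fullref{lem:lem9.5} and the degree matching $|\bar b^2|,|\bar b^4|$ with the abstract generators. Consequently
\[ H^*(X_{6,0})\oplus H^*(M(2))\cong CA\{\bar c''\}. \]
As a free $DA$--module, $CA\{\bar c''\}=DA\{\bar c'',C\bar c'',\dots,C^p\bar c''\}$ has rank $p+1=8$, and the goal is to split off a rank--one summand corresponding to $X_{6,0}$.

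The generator of $H^*(X_{6,0})\cong DA\{D_2\}$ sits in degree $|D_2|=96$. Inside $CA\{\bar c''\}$ this generator is $D_2''=\bar c''V$, which restricts to $\tilde D_2$ on the $F^{\ec}$--radical elementary abelian subgroup, matching the identification $H^*(X_{6,0})\cong DA\{D_2\}$. Using $V=D_1-C^p$, we obtain $D_2''=D_1\bar c''-C^p\bar c''$. This relation lets us change basis: the set $\{\bar c'',C\bar c'',\dots,C^{p-1}\bar c'',D_2''\}$ is again a $DA$--basis of $CA\{\bar c''\}$, since $C^p\bar c''=D_1\bar c''-D_2''$ expresses the removed element in terms of the new ones. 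Hence
\[ H^*(M(2))\cong DA\{\bar c'',C\bar c'',\dots,C^{p-1}\bar c''\}. \]

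Finally, since $|\bar c''|=|C|=2p-2$, the shift $C^i\bar c''\mapsto C^{i+1}$ is a degree-preserving $DA$--module isomorphism onto $DA\{C,C^2,\dots,C^p\}=DA\{1,C,\dots,C^{p-1}\}\{C\}=M\{C\}$, yielding the claim. The only genuinely delicate point is the identification of $H^*(X_{6,0})$ with $DA\cdot D_2''$ inside $CA\{\bar c''\}$ (rather than with $DA\cdot C^p\bar c''$, the other natural degree--$96$ generator); however, the two choices differ by the $DA$--unit shift above, so the resulting abstract isomorphism $H^*(M(2))\cong M\{C\}$ does not depend on this choice.
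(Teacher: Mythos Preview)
Your proof is correct and follows essentially the same route as the paper. The paper's argument is terse: it uses the decomposition $CA\{x\}\cong M\{x\}\oplus DA\{Vx\}$ stated just before the lemma, applied with $x=\wbar c''$, together with $H^*(BFi_{24})\cong DA\{1,\wbar b^2,\wbar b^4\}\oplus CA\{\wbar c''\}$ and the stable splitting of $BFi_{24}$; your proposal simply unpacks the same identification $CA\{\wbar c''\}\cong M\{\wbar c''\}\oplus DA\{D_2''\}$ (with $D_2''=V\wbar c''$) by an explicit change of $DA$--basis. Your final remark that the abstract isomorphism does not depend on whether one takes $DA\{D_2''\}$ or $DA\{C^p\wbar c''\}$ is valid, though in the paper's framing the choice $D_2''=V\wbar c''$ is canonical since $H^*(BRV_1)\cong DA\{1,\wbar b^2,\wbar b^4,D_2''\}$.
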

Comparing  the cohomology
$H^*(B(He{\co}2))\cong 
H^*(BFi_{24}')\oplus M\{\wbar a^2,\wbar c'\wbar b,
\wbar c'\wbar b^2\},$
we have the isomorphisms
\[H^*(M(2))\cong M\{\wbar a^2\}, H^*(L(2,2)\vee L(2,4))
\cong M\{\wbar c'\wbar b,\wbar c'\wbar b^2\}.\]
From $H^*(BFi_{24}')\cong 
H^*(BFi_{24})\oplus DA\{\wbar a^2V,
\wbar c'\wbar bV,\wbar c'\wbar b^2V\}\oplus CA\{\wbar a\wbar d\}$,
we also know that
\[H^*(X_{6,3})\cong CA\{\wbar a\wbar d\},
H^*(X_{6,0}\vee X_{2,2})\cong DA\{\wbar a^2V,\wbar c'\wbar bV,
\wbar c'\wbar b^2V\}.\]
We still get   $H^*(BFi_{24})\cong H^*(BRV_1)\oplus M\{\wbar c''\}$
and $H^*(M(2))\cong M\{\wbar c''\}.$

Next consider the cohomology of groups studied in \fullref{sec:sec7} 
eg $O'N$.
There is the isomorphism
\[H^*(BO'N)\cong H^*(BO'N{\co}2)\oplus
DA\{b,b^2,ab^2\}
\oplus CA\{d,da,da^2\}.\]
Indeed, we have 
\begin{align*}
H^*(X_{6,0}\vee X_{4,4}) & \cong
DA\{b,b^2,ab^2\}\cong DA\{a^2,a^3,a^4\} \\ 
H^{*}(X_{6,3}) & \cong CA\{da\} \\
H^*(X_{4,1}) & \cong CA\{d,da^2\}.
\end{align*}
We also have the isomorphism
$H^*(BO'N{\colon}2)\cong H^*(BRV_2)\oplus M\{C',C'a,C'a^2\}$
and $H^*(M(2)\vee L(2,2)\vee L(2,4))\cong M\{C',C'a,C'a^2\}$.

Recall that
\[H^*(BE)^{3SD_{32}}\cong CA\{1,a^2,a^4/V\}
\cong DA\{1,a^2,a^4\}\oplus M\{C,a^2C,a^4/V\},\]
in fact $H^*(M(2)\vee L(2,2)\vee L(2,4))\cong M\{C,a^2C,a^4/V\}$.

\section[The cohomology of M for p=13]
{The cohomology of $\bM$ for $p=13$}
\label{sec:sec10}

In this section, we consider the case $p=13$ and $G=\bM$
the Fisher--Griess Monster group.
It is know that $W_G(E)\cong 3\times 4S_4$.  
The $G$--conjugacy classes of $A$--subgroups are 
divided  two classes ;
one is $F^{\ec}$--radical and the other is
not.  The class of $F^{\ec}$--radical groups contains 
$6$ $E$--conjugacy classes (see Ruiz--Viruel \cite{R-V}). 
(The description of \cite[(4.1)]{T-Y} was not correct, and the 
description of $H^*(B\bM)$ in \cite[Theorem 6.6]{T-Y} was not
correct.)  The Weyl group $W_G(A)\cong SL_2(\bF_{13}).4$ for
each $F^{\ec}$--radical subgroup $A$.

Since $S_4\cong PGL_2(\bF_3)$ [S], we have the presentation of
\[S_4={\langle}x,y,z|x^3=y^3=z^2=(xy)^2= 1, zxz^{-1}=y{\rangle}.\]
(Take $x=u,y=u'$ in \fullref{lem:lem4.8}, and $z=w$ in \fullref{sec:sec5}.)
By arguments in the proof of Suzuki \cite[Chapter~3~(6.24)]{S}, we can take
elements $x,y,z$ in $GL_2(\bF_{13})$ by
\begin{equation}\label{eqn:eqn10.1}
x=\left(\begin{smallmatrix}
3&0\\ 0&9\\ \end{smallmatrix}\right) , \qua       
y=\left(\begin{smallmatrix}
5&-4\\ -2&7\\ \end{smallmatrix}\right), \qua
z=\left(\begin{smallmatrix}
2&2\\ 1&-2\\ \end{smallmatrix}\right),  
\end{equation}
so that we have
\[x^3=y^3=1,\ zxz^{-1}=y,\ (xy)^2=-1,\ z^2=\diag(6,6).\]
Hence we can identify
\begin{equation}\label{eqn:eqn10.2}
\quad 3\times 4S_4\cong {\langle}x,y,z{\rangle} \subset 
GL_2(\bF_{13}).
\end{equation}
It is almost immediate that $H^*(BE)^{{\langle}x{\rangle}}$
(resp. $H^*(BE)^{{\langle}-1{\rangle}}$) is multiplicatively
generated by $y_1y_2,y_1^3,y_2^3$ (resp.
$y_1y_2,y_1^2,y_2^2$) as a $\bZ/(13)[C,v]$--algebra.
Hence we can write
\begin{align}\label{eqn:eqn10.3}
H^*(BE)^{{\langle}x,-1{\rangle}}& \cong \bZ/(13)[C,v]
\bigl\{\{1,y_1y_2,\ldots,(y_1y_2)^5\}\{(y_1y_2)^6,y_1^6,y_2^6\},\\ \nonumber
& \qua y_1^{12},y_2^{12}, y_1^{12}y_2^6,y_1^6y_2^{12}\bigr\}.
\end{align}
For the invariant $H^*(BE)^{{\langle}y,-1{\rangle}}$, we get the 
similar 
result
exchanging $y_i$ to $(z^{-1})^*y_i$ since $zxz^{-1}=y$. 
Indeed $(z^{-1})^*{\co}H^*(BE)^{{\langle}x,-1{\rangle}}
\cong H^*(BE)^{{\langle}y,-1{\rangle}}$.  

To seek invariants, we recall the relation between the $A$--subgroups
and elements in $H^2(BE;\bZ/p)$.  For 
$0\not =y=\alpha y_1+\beta y_2\in H^2(BE;\bZ/p)$, let 
$A_y=A_{-(\alpha/\beta)}$ so that $y|A_y=0$.  This induces
the $1-1$ correspondence,
\[(H^2(BE;\bZ/p)-\{0\})/F_p^*
\leftrightarrow \{A_i|i\in F_p\cup\{\infty\}\},
\qua y \leftrightarrow A_y.\]
Considering the map
$g^{-1}A_i \stackrel{g}{\to} A_i \subset
E \stackrel {\beta^{-1}y}{\to} \bZ/p,$
we easily see $A_{g^*y}=g^{-1}A_y$.

For example, the order $3$ element $x$ induces the maps
\begin{align*}
x^* \co & y_1-y_2\mapsto 3y_1-9y_2 
\mapsto 9y_1-3y_2 \mapsto y_1-y_2 \\
x^{-1} \co & A_{y_1-y_2}={\langle}c,ab{\rangle}\to 
{\langle}c,a^9b^{3}{\rangle}
\to {\langle}c,a^3b^{9}{\rangle}\to {\langle}c,ab{\rangle}.
\end{align*}
In particular $A_1,A_9,A_3$ are in the same $x$--orbit of
$A$--subgroups.  Similarly the ${\langle}x{\rangle}$--conjugacy classes 
of $A$ is given 
\[  \{A_0\}, \{A_{\infty}\}, \{A_1,A_3,A_9\},
\{A_2,A_5,A_6\}, \{A_4,A_{10},A_{12}\}, \{A_7,A_8,A_{11}\}. \] 
The ${\langle}y{\rangle}$--conjugacy classes are just $\{zA_i\}$  for
${\langle}x{\rangle}$--conjugacy classes $\{A_i\}$.
\[  \{A_7=zA_0\},\{A_{12}\}, \{A_3,A_1,A_5\},
\{A_6,A_9,A_2\}, \{A_{11},A_8,A_{\infty}\},
\{A_0,A_{10},A_{4}\}. \]
Hence we have the ${\langle}x,y{\rangle}$--conjugacy classes
\[  C_1= \{A_1,A_2,A_3,A_5,A_6,A_9\},
C_2=\{A_0,A_4,A_{10},A_{12}\},
C_3=\{A_{\infty},A_7,A_8,A_{11}\}. \]
At last we note ${\langle}x,y,z{\rangle}$--conjugacy classes are 
two classes $C_1,C_2\cup C_3.$

Let us write the ${\langle}x{\rangle}$--invariant
\begin{align}\label{eqn:eqn10.4}
u_6 & =\Pi_{A_i\in C_1}(y_2-iy_1)=
(y_2-y_1)(y_2-2y_1)\cdots(y_2-9y_1) \\ \nonumber
& =y_2^6-9y_1^3y_2^3+8y_1^6.
\end{align}
Then $u_6$ is also invariant under 
$y^*$ because the ${\langle}x,y{\rangle}$--conjugacy class 
$C_1$ divides two ${\langle}y{\rangle}$--conjugacy classes
\[C_1=\{A_1,A_3,A_5\}\cup \{A_2,A_6,A_9\} \]
and the element $u_6$ is rewritten as
\[u_6=\lambda(\Pi_{i=0}^2y^{i*}(y_2-y_1)).
(\Pi_{i=0}^2y^{i*}(y_2-2y_1)) \text{ for } 
\lambda\not =0\in \bZ/(13).\]
We also note that $u_6|A_i=0$ if and only if $i\in C_1$.
Similarly the following elements are 
${\langle}x,y{\rangle}$--invariant,
\begin{align}
u_8 & =\Pi_{A_i\in C_2\cup C_3}(y_2-iy_1)
=y_1y_2(y_2^6+9y_1^3y_2^3+8y_1^6) \\ \nonumber
u_{12} & =\Pi_{A_i\in C_2}(y_2-iy_1)^3=(y_2^4+y_1^3y_2)^3 \\ \nonumber
&  =\lambda(\Pi_{i=0}^2x^{i*}y_2)(\Pi_{i=0}^2x^{i*}(y_2-4y_1))^3 \\ \nonumber
& =\lambda'(\Pi_{i=0}^2y^{i*}(y_2-12y_1))(\Pi_{i=0}^2y^{i*}y_2)^3v\\ \nonumber
u_{12}' & =\Pi_{A_i\in C_3}(y_2-iy_1)^3=(y_1y_2^3+8y_1^4)^3.
\end{align}
Of course $(u_{12}u_{12}')^{1/3}=u_8$ and $u_6u_8=0$.  Moreover direct
computation shows $u_6^2=u_{12}+5u_{12}'$.
\begin{lemma}\label{lem:lem10.1}
$H^*(BE)^{{\langle}x,y{\rangle}}\cong
\bZ/(13)[C,v]\{1,u_6,u_6^2,u_6^3,u_8,u_8^2,u_{12}\}.$
\end{lemma}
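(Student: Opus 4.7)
The plan is to identify the claimed $\bZ/(13)[C,v]$-basis by combining the description \eqref{eqn:eqn10.3} of $H^*(BE)^{\langle x, -I\rangle}$ with the extra constraint of $y^*$-invariance. The starting observation is that $\det(x) = 27 \equiv 1$ and $\det(y) = 35-8 = 27 \equiv 1 \pmod{13}$, so $\langle x, y\rangle \subset SL_2(\bF_{13})$; by \eqref{eqn:eqn3.4}, both $C$ and $v$ are $\langle x, y\rangle$-fixed, hence the invariant ring is a $\bZ/(13)[C,v]$-module. Since $(xy)^2 = -I$ lies in $\langle x, y\rangle$, this module sits inside $H^*(BE)^{\langle x, -I\rangle}$, which \eqref{eqn:eqn10.3} presents as a free $\bZ/(13)[C,v]$-module on explicit monomials in $y_1, y_2$.

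Invariance of the seven proposed generators is essentially recorded in the paragraphs before the lemma: $u_6$, $u_8$, $u_{12}$ are $\langle x, y\rangle$-invariant because they are products over full $\langle x, y\rangle$-orbits (respectively $C_1$, $C_2 \cup C_3$, and $C_2$, the latter cubed to absorb the cubic character of $\langle x, y\rangle^{\mathrm{ab}} \cong \bZ/3$ by which the size-$4$ orbit product would otherwise transform), while $1$ and the powers $u_6^2$, $u_6^3$, $u_8^2$ follow immediately. Linear independence over $\bZ/(13)[C,v]$ can then be read off from leading monomials: after reducing to the normal form \eqref{eqn:eqn3.2} via $y_1^{13}y_2 = y_1y_2^{13}$ and $Cy_i = y_i^{13}$, the seven generators contribute to pairwise distinct positions of the free $\bZ/(13)[C,v]$-basis (for instance $u_6$ to $y_1^6$, $u_6^2$ to $y_1^{12}$, $u_6^3$ to $Cy_1^6$, $u_8$ to $y_1^7y_2$, $u_8^2$ to $Cy_1^2y_2^2$, and $u_{12}$ to $y_1^9y_2^3$), so no nontrivial $\bZ/(13)[C,v]$-relation is possible.

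The main task, and the chief obstacle, is spanning. I would verify it by computing the $y^*$-invariants of $H^*(BE)^{\langle x, -I\rangle}$ degree by degree, using the action $y^*y_1 = 5y_1 - 4y_2$ and $y^*y_2 = -2y_1 + 7y_2$ read off from \eqref{eqn:eqn3.4} and \eqref{eqn:eqn10.1}. Since $\langle x, y\rangle/\langle -I\rangle \cong A_4$ has order $12$, coprime to $13$, the averaging operator $\tfrac{1}{3}(1 + y^* + (y^*)^2)$ is a well-defined projector onto the fixed subspace, and matching its image against the Poincar\'e series
\[
\frac{1 + t^{12} + t^{16} + 2t^{24} + t^{32} + t^{36}}{(1-t^{24})(1-t^{26})}
\]
of the proposed module closes the proof. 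A cleaner alternative, in the spirit of Sections~\ref{sec:sec7}--\ref{sec:sec8}, is to use restriction to the $A$-subgroups: any invariant is determined by its restrictions to representatives of the three orbits $C_1$, $C_2$, $C_3$, and $u_6$, $u_8$, $u_{12}$ can be recognized as ``orbit Euler classes'' (each vanishing on the complementary orbits), so a dimension count in the image of $\bigoplus_i i_{A_i}^*$ exhausts the invariants.
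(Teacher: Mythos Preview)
Your setup is sound: $\langle x, y\rangle \subset SL_2(\bF_{13})$ so $C$ and $v$ are fixed, the seven generators are invariant by the orbit arguments you sketch, and a leading-monomial check (with somewhat more care than you give it, since for instance $u_6^2$, $u_{12}$ and $C$ all live in degree $24$) establishes linear independence over $\bZ/(13)[C,v]$.

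The genuine gap is spanning. You correctly identify this as the main task, but neither of your two proposals is carried out. The Poincar\'e-series approach would require either computing the trace of each element of $\langle x, y\rangle$ on the rank-$28$ free $\bZ/(13)[C,v]$-module $H^*(BE)^{\langle x, -I\rangle}$ (a Molien-type count, valid since $24$ is prime to $13$), or a direct degree-by-degree enumeration of $y^*$-fixed vectors; you do neither. Your restriction-based alternative is too vague: knowing that invariants are detected on orbit representatives does not by itself produce a dimension count, because the image of $\bigoplus_i i_{A_i}^*$ is cut out by nontrivial gluing conditions that you have not identified.

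The paper's proof is precisely the degree-by-degree computation you defer, but with a structural trick that makes it tractable. For degree $\le 24$ it argues directly from \eqref{eqn:eqn10.3}: for instance, an invariant of lowest positive degree must have the form $\gamma y_2^6 + \alpha y_1^3 y_2^3 + \beta y_1^6$; subtracting $\gamma u_6$ gives an element vanishing on $A_\infty$, hence by $\langle x,y\rangle$-invariance on all of $C_3$, so it is a multiple of $y_1^2(u_{12}')^{1/3}$, which is then shown not to be $\langle y\rangle$-invariant unless zero. For $24 < \text{degree} < 48$ the paper reduces (after subtracting $u_6$-multiples) to elements divisible by $y_1 y_2$, hence by $u_8$; writing $u' = u'' u_8$ and using that $u_8|A_i \ne 0$ while $u_6|A_i = 0$ for $i \in C_1$, the condition $(u'' - y^*u'')|A_i = 0$ for $i \in C_1$ yields explicit linear equations in the coefficients of $u''$, and a short case analysis ($k = 0,\ldots,4$) forces them to vanish. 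This restriction-to-$C_1$ device is the missing idea that turns your plan into a proof.
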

\begin{proof}
Recall \eqref{eqn:eqn10.3} to compute 
\[H^*(BE)^{{\langle}x,y{\rangle}}\cong 
H^*(BE)^{{\langle}x,-1{\rangle}}
\cap H^*(BE)^{{\langle}y,-1{\rangle}}.\]
Since $(z^{-1})^*(y_1y_2)^i\not =(y_1y_2)^i$ for $1\le  i \le p-2$,
from \eqref{eqn:eqn10.3} we know invariants of the 
lowest positive degree are of the form
\[u=\gamma y_2^6+\alpha y_2^3y_1^3+\beta y_1^6.\]
Then $u'=u-\gamma u_6$ is also invariant with $u'|A_{\infty}=0$.
Hence $u'|A_i=0$ for all $A_i\in C_3$.  Thus we know
$u'=\lambda y_1^2(u_{12}')^{1/3}.$
But this is not ${\langle}y{\rangle}$--invariant for $\lambda\not =0$, 
because
$(u')^3=\lambda^3 y_1^6u_{12}'$ is invariant, while $y_1^6$ is
not ${\langle}y{\rangle}$--invariant. Thus we know $u'=0$.

Any $16$--dimensional invariant is form of
\[u=y_1y_2(\gamma y_2^6+\alpha y_2^3y_1^3+\beta y_1^6).\]
Since $u|A_0=u|A_{\infty}=0$, we know $u|A_i=0$ for all
$A_i\in C_2\cup C_3$.  Hence we know 
\[u=\gamma u_{12}^{1/3}(u_{12}')^{1/3}=\gamma u_8.\]
By the similar arguments, we can prove the lemma
for degree $\le 24$.

For $24 <$degree$<48$, we only need consider the elements
$u'=0 \mmod(y_1y_2)$.
For example, $H^{18}(BE;\bZ/13)^{{\langle}x,-1{\rangle}}$ is 
generated by
\[\{(y_1y_2)^9, (y_1y_2)^3C, y_1^6C, y_2^6C, y_1^6y_2^{12},
y_1^{12}y_2^6\}.\]
But we can take off $y_1^6C{=}y_1^{18}$, $y_2^6C{=}y_2^{18}$
by $\lambda u_6^3+\mu Cu_6$ so that $u'{=}0 \mmod(y_1y_2)$.

Hence we can take $u'$ so that $u_8$ divides $u'$
from the arguments similar to the case of degree=$16$.
Let us write $u'=u''u_8$.  Then we can
write
\[u''=y_1^ky_2^k(\lambda_1y_1^6+\lambda_2y_1^3y_2^3)
+\lambda_3(y_1y_2)^{k-3}C,\]
taking off $\lambda y_1^ky_2^ku_6$ if necessary
since $u_6u_8=0$.  (Of course, for $k{<}3$, $\lambda_3=0$.)
Since $u_8|A_i\not =0$ and $u_6|A_i=0$ for $i\in C_1$,
we have
\[(u''-y^*u'')|A_i=0\text{ for }i\in C_1.\]
Since $y^*y_1=5y_1-4y_2$ and $y^*y_2=-2y_1+7y_2$, we have
\begin{align*}
(u''-y^*u'')|A_i
& =\lambda_1(i^k-(5-4i)^{6+k}(-2+7i)^k) \\
& \qua +\lambda_2(i^{k+3}-(5-4i)^{k+3}(-2+7i)^{k+3}) \\
& \qua 
+\lambda_3(i^{k-3}-(5-4i)^{k-3}(-2+7i)^{k-3}).
\end{align*}
We will prove that we can take all $\lambda_i=0$.
Let us write $U=u''-y^*u''$.  
We then have the following cases.
\begin{enumerate}
\item The case $k=0$, ie degree=$14$.
If we take $i=1$,
\[U|A_1=\lambda_1(1-1)+\lambda_2(1-1^35^3)=0.\]
So we have $\lambda_2=0$.  We also see $\lambda_1=0$ since
$U|A_3=\lambda_1(1-(5-12)^6)=2\lambda_1=0.$
\item The case $k=1$.  Since $y_1y_2u_6-u_8=-18y_1^4y_2^4$,
we can assume $\lambda_2=0$
taking off $\lambda u_8^2$ if necessary.  
We have also $\lambda_1=0$ from
$U|A_1=\lambda_1(1^1-1^75^1)=0.$
\item The case $k=2$. We get the  the result 
$U|A_1=2\lambda_1+4\lambda_2$, $U|A_3=5\lambda_1+5\lambda_2$.
\item The case $k=3$.  First considering $Cu_8$,
we may take $\lambda_3=0$.  The result is given by
$U|A_1=6\lambda_1+2\lambda_2$ and $U|A_2=7\lambda_1+9\lambda_2$.
\item The case $k=4$. The result follows from
\[U|A_1=6\lambda_2+9\lambda_3,
U|A_3=6\lambda_1+6\lambda_2+6\lambda_3,
U|A_5=2\lambda_1-4\lambda_2+6\lambda_3.\]
\end{enumerate}
Hence the lemma is proved.\end{proof}
Next consider the invariant under ${\langle}x,y,\diag(6,6){\rangle}$.
The action for $\diag(6,6)$ is given by
$y_1^iy_2^jv^k \mapsto 6^{i+j+2k}y_1^iy_2^jv^k$.
Hence the invariant property implies $i+j+2k=0 \mmod(12)$.
Thus $H^*(BE)^{{\langle}x,y,\diag(6,6){\rangle}}$ is  generated 
as a $CA$--algebra by 
\[ \{1,u_6v^3,u_8v^2,u_{12},u_{12}',v^6\}. \]

\begin{lemma}\label{lem:lem10.2}
The invariant 
$H^*(BE)^{3\times 4S_4}\cong H^*(BE)^{{\langle}x,y,z{\rangle}}$
is isomorphic to
\[ CA\{1,u_6v^3,
(u_6v^3)^2,(u_6v^3)^3,u_8v^8,(u_8v^8)^2/V,(u_{12}-5u_{12}')\}.\]
\end{lemma}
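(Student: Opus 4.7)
The plan is to exploit that $\langle x, y, z^2\rangle = \langle x, y, \diag(6,6)\rangle$ is a normal index--$2$ subgroup of $\langle x, y, z\rangle$, so the desired invariants form the $(+1)$--eigenspace of $z^*$ acting on $R := H^*(BE)^{\langle x, y, \diag(6,6)\rangle}$. Since $2$ is a unit mod $13$, this action is semisimple and it suffices to diagonalize $z^*$ on an explicit $CA$--basis of $R$.

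First I would promote Lemma 10.1 to an explicit $CA$--basis of $R$. Starting from the free $\bZ/13[C,v]$--basis $\{1, u_6, u_6^2, u_6^3, u_8, u_8^2, u_{12}\}$ of Lemma 10.1, and decomposing $\bZ/13[C,v] = CA\{1, v, v^2, \ldots, v^{11}\}$, the $\diag(6,6)$--condition $i + j + 2l \equiv 0 \pmod{12}$ established in the paragraph preceding the statement selects exactly two values of $l \in \{0, \ldots, 11\}$ per generator, producing a free $CA$--module of rank $14$ in which every basis element is automatically a $\diag(6,6)$--eigenvector.

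Next I would compute the $z^*$--eigenvalues on this basis. From \eqref{eqn:eqn3.4}, $z^*v = \det(z) v = -6 v = 7 v$. For $u_6 = \prod_{i \in C_1}(y_2 - iy_1)$, the action $z^*(y_2 - iy_1) = (-2 - 2i)(y_2 - \sigma(i) y_1)$ with $\sigma(i) = (1 - 2i)/(2 + 2i)$ induces the double transposition $(1\,3)(2\,6)(5\,9)$ on $C_1$, and multiplying the six scaling factors yields $z^* u_6 = 8\, u_6$. An analogous computation on $C_2 \cup C_3$, where the permutation $(0\,7)(4\,11)(10\,8)(12\,\infty)$ exchanges $C_2$ and $C_3$, gives $z^*\tilde u = 6\tilde u'$ and $z^*\tilde u' = 8\tilde u$ for the cube roots $\tilde u := u_{12}^{1/3}$ and $\tilde u' := (u_{12}')^{1/3}$, so $z^* u_{12} = 8 u_{12}'$, $z^* u_{12}' = 5 u_{12}$, and $z^* u_8 = 9 u_8$; the identity $z^{2*} = \diag(6,6)^*$, acting as $6^{\deg/2}$, provides a sanity check. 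Replacing the basis element $u_{12}$ by the $z^*$--eigenvector $u_{12} - 5 u_{12}' = 2 u_{12} - u_6^2$ (using $u_6^2 = u_{12} + 5 u_{12}'$ together with $5^2 \equiv -1 \pmod{13}$), every basis vector becomes a $z^*$--eigenvector, and combining the individual eigenvalues with the factor $7^l$ from $v^l$ picks out seven $(+1)$--eigenvectors $1,\, u_6 v^3,\, u_6^2 v^6,\, u_6^3 v^9,\, u_8 v^8,\, u_8^2 v^4,\, u_{12} - 5 u_{12}'$. Rewriting $u_6^2 v^6 = (u_6 v^3)^2$, $u_6^3 v^9 = (u_6 v^3)^3$, and $u_8^2 v^4 = (u_8 v^8)^2 / V$ produces exactly the $CA$--basis in the statement.

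The main obstacle is the eigenvalue calculation for $u_6$, $u_{12}$, and $u_{12}'$: one must carefully track how $z^*$ permutes the $p+1$ linear forms (including $y_1$ for $A_\infty$, which arises because the map $i \mapsto \sigma(i)$ has a pole at $i = 12$) and correctly accumulate the scalar factors, most easily verified by matching against the ambient scalar action of $z^{2*} = \diag(6,6)^*$.
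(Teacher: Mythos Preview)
Your proof is correct and follows the same overall strategy as the paper: both reduce to finding the $(+1)$--eigenspace of the involution $z^*$ on the $CA$--module $H^*(BE)^{\langle x,y,\diag(6,6)\rangle}$ already described in the paragraph preceding the lemma. The difference lies only in how the $z^*$--action on the generators $u_6$, $u_8$, $u_{12}$, $u_{12}'$ is computed. You track the induced fractional--linear permutation of the linear factors $(y_2 - iy_1)$ and multiply the resulting scalars, obtaining $z^*u_6 = 8u_6$, $z^*u_8 = 9u_8$, $z^*u_{12} = 8u_{12}'$, $z^*u_{12}' = 5u_{12}$ directly. The paper instead evaluates each generator and its $z^*$--image under restriction to a single well--chosen $A_i$ (for instance comparing $u_6v^3|_{A_\infty}$ with $z^*(u_6v^3)|_{A_\infty}$, or $u_{12}|_{A_0}$, $u_{12}|_{A_\infty}$ with their $z^*$--images), which pins down the eigenvalue with a single substitution rather than a product of six or eight scalars. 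Your method is more systematic and makes the consistency check $z^{2*}=\diag(6,6)^*$ transparent; the paper's restriction trick is quicker by hand. Both arrive at the same seven invariant $CA$--generators, and your rewriting $u_8^2v^4=(u_8v^8)^2/V$, $u_6^kv^{3k}=(u_6v^3)^k$ matches the statement exactly.
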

\begin{proof}
We only need compute $z^*$--action.  Since
\[3\times 4S_4\cong 
{\langle}x,y,\diag(6,6){\rangle}{\colon}{\langle}z{\rangle},\]
the $z^*$--action on $H^*(BE)^{{\langle}x,y,\diag(6,6){\rangle}}$ is an 
involution.
Let $u_6v^3=u_6(y_1,y_2)v^3$.  First note
$u_6|A_{\infty}=u_6(0,y)=y^6.$
On the other hand, its  $z^*$--action  is
\begin{align*}
z^*u_6v^3|A_{\infty}& =u_6(2y_1+2y_2,y_1-2y_2)(-6v)^3|A_{\infty}
=u_6(2y,-2y)(-6v)^3\\
& =((-2)^6-9(-2)^3(2)^3+8(2)^6)(-6)^3y^6v^3 \\
&=(1+9+8)8y^6v^3=y^6v^3.
\end{align*}
Hence we know $u_6v^3$ is invariant, while $u_6v^9$ is not.

Similarly we know 
\[u_8v^2|A_1=u_8(y,y)v^2=5y^8v^2,
\qua z^*u_8v^2|A_1=-5y^8v^2.\]
Hence $u_8v^8$ and $u_8^2v^4$ are invariant but $u_8v^2$ is not.

For the action $u_{12}$, we have
\begin{equation*}
\begin{array}{rclrclrclrcl}
u_{12}|A_0 &=&0, &u_{12}|A_{\infty}&=&y^{12}, &
u_{12}'|A_0&=&5y^{12}, & u_{12}'|A_{\infty}&=&0,\\
z^*u_{12}|A_0&=&y^{12}, &  z^*u_{12}|A_{\infty}&=&0, &
z^*u_{12}'|A_0&=&0, & z^*u_{12}'|A_{\infty}&=&5y^{12}.
\end{array}
\end{equation*}
Thus we get
$z^*u_{12}=(1/5)u_{12}',\ k^*u_{12}'=5u_{12}.$
Hence we know $u_{12}+(1/5)u_{12}'$ and 
$(u_4^3-(1/5)u_{12}')v^6=(u_6v^3)^2$
are invariants. Thus we can prove the lemma.
\end{proof}
\begin{thm}\label{thm:thm10.3} 
For $p=13$, the cohomology $H^*(B\bM)$ is
isomorphic to 
\[ DA\{1,u_8v^8,(u_8v^8)^2\}\oplus CA\{u_6v^3,
(u_6v^3)^2,(u_6v^3)^3,(u_{12}-5u_{12}'-3C)\}.\]
\end{thm}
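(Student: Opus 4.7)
The plan is to apply \fullref{thm:thm3.1} with $G=\bM$ at $p=13$. The orbit $C_1=\{A_1,A_2,A_3,A_5,A_6,A_9\}$ under $W_G(E)\cong 3\times 4S_4$ has exactly six elements, matching the six $E$--conjugacy classes of $F^{\ec}$--radical subgroups mentioned at the start of this section, so the $F^{\ec}$--radical class is $C_1$ and I pick the representative $A_1$. \fullref{thm:thm3.1} then yields
\[
H^*(B\bM)\cong H^*(BE)^{3\times 4S_4}\cap i_{A_1}^{*-1}\bigl(H^*(BA_1)^{SL_2(\bF_{13}).4}\bigr),
\]
so I need the left factor from \fullref{lem:lem10.2} and the right factor $\bF_{13}[\tilde D_1,(\tilde D_2')^4]$: the $SL_2(\bF_{13})$--invariants form the Dickson algebra $\bF_{13}[\tilde D_1,\tilde D_2']$, and the extra $\bZ/4\subset \bF_{13}^*$ scales $\tilde D_2'$ by a primitive fourth root of unity while fixing $\tilde D_1$.

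Next I compute the restriction to $A_1$ of each of the seven $CA$--module generators from \fullref{lem:lem10.2}. By construction $u_6|A_i=0$ for every $A_i\in C_1$, so $u_6v^3$, $(u_6v^3)^2$ and $(u_6v^3)^3$ all restrict to zero, and therefore $CA\{u_6v^3,(u_6v^3)^2,(u_6v^3)^3\}$ lies automatically inside $H^*(B\bM)$. Direct substitution using $y_1|A_1=y_2|A_1=y$ and $v|A_1=\tilde D_2'/y$ gives $u_8|A_1=5y^8$, so $(u_8v^8)|A_1=5(\tilde D_2')^8$ and $(u_8v^8)^2|A_1=25(\tilde D_2')^{16}$, both lying in $\bF_{13}[\tilde D_1,(\tilde D_2')^4]$. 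For the last generator, $u_{12}|A_1=(y^4+y^4)^3=8y^{12}$ and $u_{12}'|A_1=(y^4+8y^4)^3=y^{12}$, so $(u_{12}-5u_{12}')|A_1=3y^{12}$; since $C|A_1=y^{12}$ is not $SL_2$--invariant, the genuine kernel element is $u_{12}-5u_{12}'-3C$, whose $CA$--multiples contribute $CA\{u_{12}-5u_{12}'-3C\}$ to $H^*(B\bM)$.

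The third step is to pin down the exact coefficient rings for $u_8v^8$ and $(u_8v^8)^2$. Writing $CA=DA\{1,C,\ldots,C^p\}$, any product $C^i\cdot u_8v^8$ with $i\ge 1$ restricts to $5y^{12i}(\tilde D_2')^8$, which lies outside $\bF_{13}[\tilde D_1,(\tilde D_2')^4]$ because the factor $y^{12i}$ has no lift to a Dickson polynomial of the correct bidegree; on the other hand $D_1|A_1=\tilde D_1$ and $D_2|A_1=(\tilde D_2')^{12}$ both lie in the invariant ring, so $DA$--coefficients are admissible, and one obtains exactly the summands $DA\{u_8v^8\}$ and $DA\{(u_8v^8)^2\}$ in $H^*(B\bM)$. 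The seventh basis element $(u_8v^8)^2/V=u_8^2v^4$ restricts to $25y^{12}(\tilde D_2')^4$, and a short degree count shows $\bF_{13}[\tilde D_1,(\tilde D_2')^4]$ has no element of this total degree, so it contributes nothing further.

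The main obstacle is this third step: one must verify that no exotic $CA$--linear combinations of the seven generators sneak into the intersection, and that the stated direct-sum decomposition is exact rather than merely containing the six listed summands. This requires a careful degree-by-degree comparison of $\bF_{13}[\tilde D_1,(\tilde D_2')^4]$ with the image of $i_{A_1}^*$ on $H^*(BE)^{3\times 4S_4}$, using the key restriction data $C|A_1=y^{12}$, $v|A_1=\tilde D_2'/y$, $D_1|A_1=\tilde D_1$ and $D_2|A_1=(\tilde D_2')^{12}$; this is where the bulk of the computational work lies.
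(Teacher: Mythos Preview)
Your approach is the same as the paper's: apply \fullref{thm:thm3.1} with the $3\times 4S_4$--invariants from \fullref{lem:lem10.2}, identify $H^*(BA_1)^{SL_2(\bF_{13}).4}\cong\bZ/13[\tilde D_1,(\tilde D_2')^4]$, and verify the key restriction $(u_{12}-5u_{12}'-3C)|A_1=0$. The paper's own proof is in fact even terser than yours on the final verification step; it simply asserts the isomorphism after that one restriction computation, following the pattern of \fullref{thm:thm7.4} and \fullref{thm:thm8.4}.

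There is, however, a confusion in your third step that you should fix. You obtain $DA\{(u_8v^8)^2\}$ and then say the seventh generator $(u_8v^8)^2/V$ ``contributes nothing further.'' But $(u_8v^8)^2$ is \emph{not} among the seven $CA$--module generators of \fullref{lem:lem10.2}; it lies in $CA\{(u_8v^8)^2/V\}$ as $V\cdot(u_8v^8)^2/V$. So the summand $DA\{(u_8v^8)^2\}$ \emph{is} the contribution of that seventh generator --- specifically it is $DA\{V\}\cdot(u_8v^8)^2/V\subset CA\{(u_8v^8)^2/V\}$. Your degree count only excludes the bare element $(u_8v^8)^2/V$ itself, not the rest of $CA\{(u_8v^8)^2/V\}$; one must still check that among $DA\{1,C,\ldots,C^{13}\}\cdot(u_8v^8)^2/V$ only the combination $V=D_1-C^{13}$ restricts into $\bZ/13[\tilde D_1,(\tilde D_2')^4]$. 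Similarly, because $(u_{12}-5u_{12}')|A_1=3y^{12}=3C|A_1$, the generators $1$ and $u_{12}-5u_{12}'$ interact: the summand $CA\{u_{12}-5u_{12}'-3C\}$ sits diagonally inside $CA\{1\}\oplus CA\{u_{12}-5u_{12}'\}$, and you need to argue that together with $DA\{1\}$ it exhausts the intersection there. You acknowledge this residual work in your last paragraph, which is fair, but the phrasing ``contributes nothing further'' obscures where $DA\{(u_8v^8)^2\}$ actually comes from.
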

\begin{proof}
Direct computation shows
\[ u_{12}-5u_{12}'=y_2^{12}-2y_2^9y_1^3+3y_2^3y_1^9+y_1^{12},\]
and hence $u_{12}-5u_{12}'-3C|A_1=0$,
indeed, the restriction   
is zero for each $A_i\in C_1$. 
The isomorphism
\[H^*(B\bM)\cong H^*(BE)^{3\times 4S_4}\cap i_{A_1}^{-*}
(H^*(BA_1)^{SL_4(\bF_{13}).4},\]
completes the proof. 
\end{proof}
The stable splitting is given by the following theorem.
\begin{thm}
We have the stable splitting
\begin{align*}
B\bM & \sim X_{0,0}\vee X_{12,0}\vee X_{12,6}
\vee X_{6,3}\vee X_{8,8}
\vee M(2),\\
B(E{\co}3\times 4S_4) & \sim B\bM\vee M(2)
\vee L(2,4)\vee L(2,8).
\end{align*}
\end{thm}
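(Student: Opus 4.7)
The plan is to apply \fullref{cor:cor4.6} to both $G = \bM$ and $G = E \colon 3 \times 4 S_4$, using \fullref{lem:lem4.7} for the dominant summands $X_{q,k}$ and combining \fullref{prop:prop4.9}, \fullref{cor:cor4.10} and \fullref{lem:lem4.11} for the nondominant $L(2,k)$, $L(1,k)$; the concrete inputs are \fullref{lem:lem10.2} and \fullref{thm:thm10.3}.

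\textbf{Dominant summands.} Since $W_{\bM}(E) = W_{E \colon 3 \times 4 S_4}(E) = 3 \times 4 S_4$, \fullref{cor:cor4.6} shows both spaces have the same set of $X_{q,k}$ with multiplicities $n(G)_{q,k} = \rank_{\bF_{13}} X_{q,k}(3 \times 4 S_4)$ by \fullref{lem:lem4.7}. Inspecting the seven $CA$-module generators of $H^*(BE)^{3 \times 4 S_4}$ in \fullref{lem:lem10.2}, exactly five have pure $y$-degree at most $p-1 = 12$: namely $1, u_6v^3, (u_6v^3)^2, u_8v^8$ and $u_{12} - 5u_{12}'$, sitting in bidegrees $(0,0), (6,3), (12,6), (8,8), (12,0)$ respectively. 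Each produces one copy of the corresponding $X_{q,k}$ and no others, so both splittings start with $X_{0,0} \vee X_{6,3} \vee X_{12,6} \vee X_{8,8} \vee X_{12,0}$.

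\textbf{Nondominant summands for $B\bM$.} By Ruiz--Viruel and the $\langle x,y,z\rangle$-orbit analysis in \fullref{sec:sec10}, $\bM$ has two $G$-conjugacy classes of $A$-subgroups, $C_1$ (which is $F^{\ec}$-radical) and $C_2 \cup C_3$ (which is not). Hence \fullref{prop:prop4.9} together with \fullref{cor:cor4.10} yields $m(\bM,2)_0 = 1$, contributing one $M(2)$. For $k > 0$, the central element $\diag(6,6) \in W_\bM(E)$ is a primitive $(p-1)$-th root of unity, so \fullref{lem:lem4.11} with $\xi = 6$ eliminates $L(2,k)$ unless $\xi^{3k} = 8^k \equiv 1 \pmod{13}$, i.e., unless $4 \mid k$, leaving only the candidates $k = 4, 8$. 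These I would exclude by evaluating $\bar W_\bM(A) M_{12,k}$ on the non-$F^{\ec}$-radical orbit, using the additional averaging from the $S_4$-factor to kill both contributions; equivalently, matching $DA$-ranks against \fullref{thm:thm10.3} shows the residual nondominant piece has $DA$-rank exactly $p - 1 = 12$, precisely $H^*(M(2)) \cong M\{C\}$, with no room for further $L(2,k)$ or $L(1,k)$.

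\textbf{Nondominant summands for $B(E \colon 3 \times 4 S_4)$, and the main obstacle.} Here no $A$-subgroup is $F^{\ec}$-radical, so \fullref{prop:prop4.9} gives $m(G,2)_0 = 2$, accounting for the extra $M(2)$. The class $C_1$ now also contributes; \fullref{lem:lem4.11} again restricts to $k \in \{0, 4, 8\}$, and a direct evaluation of $\bar W_G(A_1) M_{12, k}$ for the Borel-type Weyl group yields exactly one $L(2,4)$ and one $L(2,8)$. These match the two remaining $CA$-generators $(u_6v^3)^3$ and $(u_8v^8)^2/V$ of \fullref{lem:lem10.2}, whose $y$-degrees $18$ and $16$ exceed $p-1$ so that they cannot sit in any dominant $X_{q,k}$; after reducing via $y_i^p = Cy_i$ the non-$C$ residues furnish the expected $L(2,4) \vee L(2,8)$. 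The principal obstacle is the explicit bookkeeping of $m(G,2)_4$ and $m(G,2)_8$ in each case: one must unpack $W_G(A)$ for every non-$F^{\ec}$-radical class and evaluate the averaging operator on $S(A)^{12} \otimes \det^k$, with the $DA$-rank accounting against \fullref{thm:thm10.3} serving as the primary consistency check.
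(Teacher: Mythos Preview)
Your overall plan matches the paper's: dominant summands via \fullref{lem:lem4.7} applied to the generators in \fullref{lem:lem10.2}, nondominant summands via the Weyl groups of the two $A$--classes. What you call ``the principal obstacle'' is exactly the step the paper carries out and you leave undone. The paper's resolution is short and concrete: since $x=\diag(3,9)$ normalises $A_0=\langle c,a\rangle$ but not $A_1=\langle c,ab\rangle$, one gets
\[
N_H(A_0)=E{\colon}\langle x,\diag(6,6)\rangle,\qquad
N_H(A_1)=E{\colon}\langle\diag(6,6)\rangle,
\]
hence $W_H(A_0)=U{\colon}\langle\diag(1,3),\diag(6^2,6)\rangle$ and $W_H(A_1)=U{\colon}\langle\diag(6^2,6)\rangle$. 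The extra $\diag(1,3)$ in $W_H(A_0)$ contributes the factor $\sum_{i=0}^{2}3^{ik}$, which is nonzero mod $13$ only when $3\mid k$; combined with your $4\mid k$ from $\diag(6,6)$ this forces $k=0$, so $A_0$ yields exactly $M(2)$. For $A_1$ only the condition $4\mid k$ applies, giving $M(2)\vee L(2,4)\vee L(2,8)$. Since $A_1$ is $F^{\ec}$--radical in $\bM$ (so contributes nothing there by \fullref{lem:lem4.8}) and $W_{\bM}(A_0)\supset W_H(A_0)$, the $B\bM$ nondominant part is just the $A_0$ contribution. So your vague reference to ``averaging from the $S_4$--factor'' needs to be replaced by the specific observation that the order--$3$ element $x$ stabilises $A_0$.

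Your $DA$--rank alternative does not stand on its own: the paper never computes $H^*(X_{q,k})$ for these summands at $p=13$, so you cannot separate the dominant from the nondominant contribution in $H^*(B\bM)$ by rank alone (and incidentally $H^*(M(2))\cong M\{C\}$ has $DA$--rank $p=13$, not $p-1$). It is a consistency check once the Weyl--group computation is done, not a replacement for it.
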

\begin{proof}
Let $H=E{\co}3\times 4S_4.$  Recall that
\[
X_{q,k}(H) =(S(A)^q\otimes v^k)\cap H^*(BH)
\qua 0\le q\le 12, 0\le k\le 11.\] 
We already know
\[X_{*,*}(H)=\bZ/(13)\{ 1,u_8v^8,u_6v^3,u_6^2v^6,
u_{12}-5u_{12}'\}.\]
Hence $BH$ has the dominant summands in the theorem.      

The normalizer groups of $A_0,A_1$ are given
\[N_H(A_0)=E{\co}{\langle}x,\diag(6,6){\rangle}, 
N_H(A_1)=E{\co}{\langle}\diag(6,6){\rangle}.\]
Hence the Weyl groups are
\[W_H(A_0)=U{\co}{\langle}\diag(1,3),\diag(6^2,6){\rangle},
W_H(A_1)=U{\co}{\langle}\diag(6^2,6){\rangle}.\]
From the arguments of \fullref{lem:lem4.11}, the non-dominant 
summands induced from $BA_1$ are
$M(2)\vee L(2,4)\vee L(2,8).$
We also know the non-dominant summands from $BA_0$ are
$M(2)$. This follows from
\[ \overline{{\langle}\diag(1,3){\rangle}}y_2^{p-1}=
\sum_{i=0}^2(3^i)^ky_2^{p-1}\quad for \ y_2^{p-1}\in M_{p-1,k}\]
and this is nonzero mod($13$) if and only if $k=0 \mmod(3)$.
\end{proof}
\begin{rem}
It is known $H^*(Th)\cong DA$ for $p=5$ in \cite{T-Y}.
Hence all cohomology $H^*(BG)$ for groups $G$ in
\fullref{thm:thm2.1} (4)--(7) are explicitly known. For (1)--(3), see also
Tezuka--Yagita \cite{T-Y}.
\end{rem}

\section[Nilpotent parts of H*(BG,Z(p))]
{Nilpotent parts of $H^*(BG;\bZ_{(p)})$}
\label{sec:sec11}

It is known that $p^2H^*(BE;\bZ)=0$ (see Tezuka--Yagita \cite{T-Y} and
Leary \cite{L2}) and
\[pH^{*{>}0}(BE;\bZ)\cong \bZ/p\{pv,pv^2,\ldots\}.\]
In particular $H^{\odd}(BE;\bZ)$ is all just $p$--torsion.  
There is a decomposition
\[H^{\even}(BE;\bZ)/p\cong H^*(BE)\oplus N \text{ with }
N=\bZ/p[V]\{b_1,\ldots,b_{p-3}\}\]
where $b_i=Cor_{A_0}^E(u^{i+1}), |b_i|=2i+2$. 
(Note for $p=3$,$N=0$.) 
The restriction images $b_i|A_j=0$ 
for all $j\in \bF_p \cup \infty$.  
For $g\in GL_2(\bF_p)$, the induced action 
is given by $g^*(b_i)=\det(g)^{i+1}b_i$ by 
the definition of $b_i$.

Note that 
\[2=|y_i|{<}|b_j|=2(j+1){<}|C|=2p-2{<}|v|=2p .\]
So $g^*(y_i)$ is given by \eqref{eqn:eqn3.4}
also in $H^*(BE;\bZ)$ and $g^*(v)=\det(g)v \mmod(p)$. 
Hence we can identify that
\[H^*(BE)^H=(H^{\even}(BE;\bZ)/(p,N))^H \subset
H^{\even}(BE;\bZ/p)^H.\]                
Let us write the reduction map by 
$q{\co}H^*(BE;\bZ)\to H^*(BE;\bZ/p).$
\begin{lemma}\label{lem:lem11.1}
Let $H\subset GL_2(\bF_p)$ and $(|H|,p)=1$.
If $x\in H^*(BE)^H$, then there is $x'\in H^*(BE;\bZ)^H$
such that $q(x')=x$.
\end{lemma}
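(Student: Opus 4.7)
The plan is to prove the lemma by a standard averaging (transfer) argument, using that $|H|$ is coprime to $p$ and that the kernel of the projection $H^*(BE;\bZ)\to H^*(BE)$ is $H$--stable.

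First I would choose any integral lift $\tilde x\in H^{\even}(BE;\bZ)$ of $x$. Such a lift exists because the composition
\[ H^{\even}(BE;\bZ)\twoheadrightarrow H^{\even}(BE;\bZ)/p\cong H^*(BE)\oplus N \twoheadrightarrow H^*(BE) \]
is surjective (kill the $N$--component of any mod $p$ preimage of $x$, then pick an integral lift). Note that $\tilde x$ need not be $H$--invariant.

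Next I would set
\[ x' = \tfrac{1}{|H|}\sum_{h\in H}h^*\tilde x. \]
This makes sense in $H^*(BE;\bZ)$: in degree $0$ this is just $\tilde x$ (since $H$ acts trivially on $\bZ$), and in positive degrees $H^{>0}(BE;\bZ)$ is $p$--primary torsion killed by $p^2$, so the hypothesis $(|H|,p)=1$ makes $|H|$ a unit on that module, giving a well-defined $\tfrac{1}{|H|}$. By construction $x'$ is $H$--invariant.

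Finally I would verify that $x'$ reduces to $x$ in $H^*(BE)$. Let $\pi\co H^*(BE;\bZ)\to H^*(BE)$ denote the composition of $q$ with the quotient by $\sqrt 0$. The nilradical $\sqrt 0\subset H^*(BE;\bZ/p)$ is stable under every ring automorphism, hence under the $GL_2(\bF_p)$--action, so $\pi$ is $H$--equivariant. Since $\pi(\tilde x)=x$ and $x\in H^*(BE)^H$, we get $\pi(h^*\tilde x)=h^*x=x$ for every $h\in H$, and therefore
\[ \pi(x')=\tfrac{1}{|H|}\sum_{h\in H}\pi(h^*\tilde x)=\tfrac{1}{|H|}\cdot |H|\cdot x=x, \]
as required. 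There is no real obstacle here; the only point to check carefully is that $|H|$ acts invertibly on the torsion module $H^{>0}(BE;\bZ)$, which follows at once from $p^2H^{>0}(BE;\bZ)=0$ and $(|H|,p)=1$.
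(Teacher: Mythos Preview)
Your averaging argument is correct and takes a genuinely different (and more direct) route than the paper's. The paper forms the semidirect product $G=E{\colon}H$, identifies $H^*(BG;R)\cong H^*(BE;R)^H$ for $R=\bZ_{(p)},\bZ/p$ using $(|H|,p)=1$, and then argues via the Bockstein long exact sequence for $G$: since $x$ already lies in the image of $q$ on $H^*(BE;\bZ/p)$ one has $\beta(x)=0$; and since $q$ is injective on $H^{\odd}(BG;\bZ_{(p)})$ (all odd classes being exactly $p$--torsion), the connecting map $\delta$ kills $x$, so $x$ lifts along $q$ for $G$. Your argument bypasses the Bockstein sequence entirely by lifting first and then averaging, using only that $|H|$ acts invertibly on the $p^2$--torsion module $H^{>0}(BE;\bZ)$. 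Both proofs hinge on the coprimality hypothesis; yours is more elementary and self-contained, while the paper's works directly at the level of the group $E{\colon}H$ one ultimately cares about.

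One small remark: as written you verify $\pi(x')=x$ rather than $q(x')=x$ in $H^*(BE;\bZ/p)$. Since the decomposition $H^{\even}(BE;\bZ)/p\cong H^*(BE)\oplus N$ is $GL_2(\bF_p)$--equivariant (the $b_i$ transform by powers of $\det$, as the paper records just before the lemma), your $\tilde x$ has zero $N$--component and this is preserved under each $h^*$; hence in fact $q(x')=x$ on the nose, and you could say so explicitly.
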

\begin{proof}
Let  $x\in H^*(BE)^H$ and $G=E{\co}H$.  
Then we can think $x\in H^*(BE;\bZ/p)^H \cong 
H^*(BG;\bZ/p)$ and $\beta(x)=0$.
By the exact sequence 
\[H^{\even}(BG;\bZ_{(p)})\stackrel{q}{\to}
H^{\even}(BG;\bZ/p))\stackrel{\delta}{\to} H^{\odd}(BG;\bZ_{(p)}),
\]
we easily see that $x\in Image(q)$ 
since  $q\delta(x)=\beta(x)=0$
and $q|H^{\odd}(BG;\bZ_{(p)})$ is injective.
Since $H^*(BG;R)\cong H^*(BE;R)^H$ for $R=\bZ_{(p)}$ or
$\bZ/p$, we get the lemma.
\end{proof}

\begin{proof}[Proof of \fullref{thm:thm3.1}]
From Tezuka--Yagita \cite[Theorem 4.3]{T-Y} and Broto--Levi--Oliver \cite{B-L-O},  
we have the isomorphism
\[H^*(BG;\bZ)_{(p)}\cong H^*(BE;\bZ)^{W_G(E)}
  \cap _{A{\co}F^{\ec}-\text{radical}}i_A^{*-1}H^*(BA;\bZ)^{W_G(A)}.\]
The theorem  is immediate from the above lemma and the fact that
$H^{\even{>}0}(BA;\bZ)\cong H^{*{>}0}(BA)$.
\end{proof}

Let us write $N(G)=H^*(BG;\bZ)\cap N$.  Then 
\[H^{\even}(BG;\bZ)/p \cong H^*(BG)\oplus N(G).\] 
The nilpotent parts $N(G)$ depends only on the group
$\ddet(G)=\{\det(g)|g\in W_G(E)\}\subset \bF_p^*$,
in fact, $N(G)=N^{W_G(E)}=N^{\ddet(G)}$ . 
\begin{lemma}\label{lem:lem11.2}
If $\ddet(G)\cong \bF_p^*$ (eg $G=O'N,He,\ldots,RV_3$ 
for $p=7$, or $G=\bM$ for $p=13$), then
\[N(G)\cong \bZ/p[V]\{b_iv^{p-2-i}|1\le i\le p-3\}.\]
\end{lemma}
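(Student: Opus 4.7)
The plan is to reduce the lemma to a character-weight calculation for the $\det$ action of $\bF_p^{*}$ on monomials of $N$. Starting from the formulas recorded just above the lemma, namely $g^{*}b_i = \det(g)^{i+1}b_i$ and $g^{*}v \equiv \det(g)v \pmod p$, together with $g^{*}V = V$, one sees that every monomial of the form $V^k b_i v^j$ is an eigenvector for the full $W_G(E)$-action with eigencharacter $\det^{\,i+1+j}$. In particular the action factors through $\ddet(G)$, so $N^{W_G(E)} = N^{\ddet(G)}$ as already asserted, and the hypothesis $\ddet(G)\cong \bF_p^{*}$ means I must collect exactly those monomials whose total weight $i+1+j$ is divisible by $p-1$.

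Next I would pin down a $\bZ/p[V]$-basis for $N$ suited to this weight count. Using $v^{p-1}=V$ to restrict the $v$-exponent to $0\le j\le p-2$, the classes $V^k b_i v^j$ with $k\ge 0$, $1\le i\le p-3$, $0\le j\le p-2$ span $N$, and I would check freeness by comparing the Poincar\'e series of this spanning set with the known description of $H^{\even}(BE;\bZ)/p$ recorded in \cite{T-Y,L2}. With freeness in place, the invariance condition $i+1+j\equiv 0\pmod{p-1}$ admits, for each fixed $i\in\{1,\dots,p-3\}$, a unique representative $j=p-2-i$ in $\{0,\dots,p-2\}$ (note $p-2-i\ge 0$ since $i\le p-3$). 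Summing the resulting rank-one $\bZ/p[V]$-submodules over $i$ yields
\[
N(G) \;=\; \bigoplus_{i=1}^{p-3}\,\bZ/p[V]\cdot(b_i v^{\,p-2-i}),
\]
which is exactly the claimed isomorphism.

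The main obstacle is the middle step: confirming that the enlarged family $\{b_i v^j\}_{1\le i\le p-3,\,0\le j\le p-2}$ really is a free $\bZ/p[V]$-basis of $N$ and not merely a spanning set related by hidden identities of the form $b_i v^j = \lambda V^k b_{i'}$. The cleanest way to settle this is a restriction argument: since $b_i|A = 0$ for every $A$-subgroup by construction, the $b_i v^j$ all lie in $N$ and cannot be confused with classes in the polynomial summand $H^{*}(BE)$, so any linear relation among them would have to come from a relation inside the ideal generated by the $b_i$'s in $H^{\even}(BE;\bZ)/p$, and a direct rank count against \cite{T-Y,L2} rules this out. Once this bookkeeping is done, the weight computation above completes the proof automatically.
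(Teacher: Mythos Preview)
Your argument is correct and is exactly the eigencharacter computation the paper has in mind; Lemma~11.2 (and likewise Lemma~11.3) is stated without proof, as an immediate consequence of the description of $N$ and the action formulas $g^*b_i=\det(g)^{\,i+1}b_i$, $g^*v\equiv\det(g)v$ recorded just above it. One remark: the paper's displayed formula $N=\bZ/p[V]\{b_1,\ldots,b_{p-3}\}$ is evidently a typo for $N=\bZ/p[v]\{b_1,\ldots,b_{p-3}\}$ (compare the case $\ddet(G)=\{1\}$ in Lemma~11.3, where $N(G)=N=\bZ/7[v]\{b_1,\ldots,b_4\}$), and once you read it that way the freeness of $\{b_iv^j:1\le i\le p-3,\ 0\le j\le p-2\}$ over $\bZ/p[V]$ is automatic --- so your ``main obstacle'' dissolves and no Poincar\'e-series bookkeeping is needed.
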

\begin{lemma}\label{lem:lem11.3}
Let $G$ have a $7$--Sylow subgroup $E$.  Then, we have
\[N(G)=\begin{cases}
\bZ/7[V]\{b_1v^4,b_2v^3,b_3v^2,b_4v\} 
\text{ if }\ddet(G)=\bF_7^*
\\
\bZ/7[v^3]\{b_1v,b_2,b_3v^2,b_4v\} \text{ if }\ddet(G)\cong \bZ/3
\\
\bZ/7[v^2]\{b_1,b_2v,b_3,b_4v\} \text{ if }\ddet(G)\cong \bZ/2\\
\bZ/7[v]\{b_1,b_2,b_3,b_4\} \text{ if }\ddet(G)\cong \{1\}.
\end{cases}
\]
\end{lemma}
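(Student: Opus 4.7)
The plan is to compute $N(G)=N^{\ddet(G)}$ directly from the $\bZ/p$--basis of $N$ and the determinant action stated just before Lemma~11.2. Since $g^*(b_i)=\det(g)^{i+1}b_i$ and $g^*(v)=\det(g)\,v$ for $g\in GL_2(\bF_p)$, any monomial $b_iv^j$ in $N$ satisfies
\[
g^*(b_iv^j)=\det(g)^{\,i+j+1}\,b_iv^j,
\]
so $b_iv^j\in N^{\ddet(G)}$ exactly when $d^{i+j+1}=1$ for every $d\in\ddet(G)$, i.e.\ when $i+j+1\equiv 0\pmod{|\ddet(G)|}$. In particular, the action on $N$ factors through $\det$, confirming that $N(G)$ depends only on $\ddet(G)$, and the invariant subspace splits as a direct sum along the index $i$.

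For $p=7$ the nilpotent generators are $b_1,b_2,b_3,b_4$ and $\ddet(G)$ is one of the four subgroups $\bF_7^*,\bZ/3,\bZ/2,\{1\}$ of the cyclic group $\bF_7^*\cong\bZ/6$. For each case and each $i\in\{1,2,3,4\}$ I would read off the smallest $j(i)\ge 0$ solving $i+j(i)+1\equiv 0\pmod{|\ddet(G)|}$; the $b_i$--row of invariants is then the free $\bZ/7[v^{|\ddet(G)|}]$--module on $b_iv^{j(i)}$, because all other invariants in that row differ from $b_iv^{j(i)}$ by a power of $v^{|\ddet(G)|}$. The four residue tables $(j(1),j(2),j(3),j(4))$ come out to $(4,3,2,1)$, $(1,0,2,1)$, $(0,1,0,1)$, $(0,0,0,0)$ respectively, and since $v^{|\ddet(G)|}$ equals $V=v^6$, $v^3$, $v^2$, $v$ in these four cases, assembling the rows reproduces exactly the four expressions displayed in the statement.

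The argument is purely modular arithmetic, so there is no substantive obstacle; the only point requiring care is to verify that the $\bZ/p$--span of $\{b_iv^{j(i)+k|\ddet(G)|}:1\le i\le 4,\ k\ge 0\}$ exhausts $N^{\ddet(G)}$. This is immediate since any invariant monomial $b_iv^j$ must have $j\equiv j(i)\pmod{|\ddet(G)|}$ in its row and hence lies in the indicated free module. (Note that this is simultaneously a proof of Lemma~11.2, which is just the $\ddet(G)=\bF_p^*$ case of the same calculation.)
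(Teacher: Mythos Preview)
Your proposal is correct and follows exactly the approach implicit in the paper. The paper states both Lemma~11.2 and Lemma~11.3 without proof, treating them as immediate consequences of the preceding remarks that $N(G)=N^{\ddet(G)}$ and $g^*(b_i)=\det(g)^{i+1}b_i$, $g^*(v)=\det(g)v$; your argument simply spells out this invariant computation via the congruence $i+j+1\equiv 0\pmod{|\ddet(G)|}$, and your residue tables agree with the stated answers.
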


Now we consider the odd dimensional elements.
Recall that $$H^{\odd}(BA;\bZ)\cong \bZ/p[y_1,y_2]\{\alpha\},$$
where $\alpha=\beta(x_1x_2)\in H^*(BA;\bZ/p)\cong \bZ/p[y_1,y_2]\otimes
\Lambda(x_1,x_2)$ with $\beta(x_i)=y_i$.
Of course $g^*(\alpha)=\det(g )\alpha$ for $g\in \Out(A)$.
For example $H^{\odd}(B(A{\colon}Q_8))\cong 
H^*(B(A{\colon}Q_8))\{\alpha\}$
since $\ddet(A{\co}Q_8)=\{1\}$.

Recall the Milnor operation 
$Q_{i+1}=[P^{p^n}Q_i-Q_iP^{p^n}],Q_0=\beta$. It is known that 
\[Q_1(\alpha)=y_1^py_2-y_1y_2^p=\tilde D_2' \text{ with }
(\tilde D_2')^{p-1}=\tilde D_2.\]
The submodule of $H^*(X;\bZ_{(p)})$ generated by 
(just) $p$--torsion additive generators 
can be identified with $Q_0H^*(X;\bZ/p)$. 
Since $Q_iQ_0=-Q_0Q_i$, we can extend the map
\cite[page 377]{Y1}
\[Q_i{\co}Q_0H^*(X;\bZ/p) \stackrel{Q_i}{\to}
Q_0H^*(X;\bZ/p)\subset H^*(X;\bZ_{(p)}).\]
Since all elements in $H^{\odd}(BA;\bZ)$ are
(just) $p$--torsion, 
we can define the map   
$$Q_1{\co}H^{\odd}(BA;\bZ)\to H^{\even}(BA;\bZ)=H^{\even}(BA).$$
Moreover this map is injective.
\begin{lemma}[Yagita \cite{Y1}]
\label{lem:lem11.4}
Let $G$ have the $p$--Sylow subgroup $A=(\bZ/p)^2$.
Then 
\[Q_1\ {\co}\ H^{\odd}(BG;\bZ_{(p)}) \cong 
(H^{\even}(BG)\cap J(G)),\]
with $J(G)=\mathrm{Ideal}(y_1^py_2-y_1y_2^p)\subset H^{\even}(BA).$
\end{lemma}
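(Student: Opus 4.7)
The plan is to exploit that $A$ is an abelian $p$--Sylow of $G$, so by the standard stable elements formula $H^*(BG;R)_{(p)}\cong H^*(BA;R)^{W_G(A)}$ for $R=\bZ$ or $\bZ/p$ (as $|W_G(A)|$ is prime to $p$), and then to compute $Q_1$ directly on $BA$ and transport the result through these invariants. In particular both $H^{\odd}(BG;\bZ_{(p)})$ and $H^{\even}(BG)\cap J(G)$ become $W_G(A)$--invariants of explicit modules inside $H^*(BA)$, and the whole question is reduced to a derivation calculation at $BA$.

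First I would work at the level of $BA$. Since $\alpha=\beta(x_1x_2)=y_1x_2-x_1y_2$ and $Q_1$ is a derivation on $H^*(BA;\bZ/p)=\bZ/p[y_1,y_2]\otimes\Lambda(x_1,x_2)$ with $Q_1(x_i)=y_i^p$ and $Q_1(y_i)=0$, one obtains $Q_1(\alpha)=y_1y_2^p-y_1^py_2=-\tilde D_2'$, and more generally $Q_1(y_1^iy_2^j\alpha)=-y_1^iy_2^j\tilde D_2'$. Reduced to $H^{\even}(BA)=\bZ/p[y_1,y_2]$, the operation $Q_1$ on $H^{\odd}(BA;\bZ)\cong \bZ/p[y_1,y_2]\{\alpha\}$ is therefore, up to sign, multiplication by $\tilde D_2'$; as $\bZ/p[y_1,y_2]$ is an integral domain, this map is injective with image exactly $J(A)=(\tilde D_2')$.

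Second, I would pass to $W_G(A)$--invariants. The operation $Q_1$ is stable, hence $W_G(A)$--equivariant; evaluating on $\operatorname{diag}(\lambda,1)$ (together with \eqref{eqn:eqn3.4}) shows $g^*\tilde D_2'=\det(g)\tilde D_2'$, which matches $g^*\alpha=\det(g)\alpha$, so $J(A)$ is a $W_G(A)$--submodule of $H^{\even}(BA)$ and the bijection of the previous step is an isomorphism of $W_G(A)$--modules. Taking invariants and combining with the stable elements identifications gives $Q_1\colon H^{\odd}(BG;\bZ_{(p)})\cong J(A)^{W_G(A)}=J(G)\cap H^{\even}(BG)$, as required.

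The main point that is not a routine derivation is ensuring that $Q_1$ is meaningful as an \emph{integral} operation on the $p$--torsion part of the cohomology. This is exactly the bookkeeping used in \cite{Y1}: every class of $H^{\odd}(BA;\bZ)$ lies in the image of $Q_0=\beta$ (since it is $p$--torsion), and the relation $Q_1Q_0=-Q_0Q_1$ then forces $Q_1$ of such a class to lie in $Q_0H^*(BA;\bZ/p)\subset H^{\even}(BA;\bZ_{(p)})$. Once this identification is set up, the derivation calculation above and the invariant-taking go through verbatim.
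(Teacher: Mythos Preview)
Your proposal is correct and matches the approach the paper outlines. The paper does not actually give a self-contained proof of this lemma (it is attributed to \cite{Y1}), but the paragraph immediately preceding the statement lays out exactly the ingredients you use: the identification of $Q_1(\alpha)$ with $\tilde D_2'$, the $Q_0Q_1=-Q_1Q_0$ bookkeeping that lets $Q_1$ act on the integral $p$--torsion, and the injectivity of $Q_1$ on $H^{\odd}(BA;\bZ)$; your passage to $W_G(A)$--invariants via the stable elements formula is the natural (and intended) way to go from $A$ to $G$.
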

\begin{cor} \label{cor:cor11.5} For $p=3$, there are isomorphisms
\begin{align*}
H^{\odd}(BA;\bZ)^{Z/8} & \cong 
S\{b,a'b,a,(a_1-a_2)\}\{\alpha\} \\                  
H^{\odd}(BA;\bZ)^{D_8} & \cong 
S\{1,a,a_1,a'\}\{b\alpha\} \\
H^{\odd}(BA;\bZ)^{SD_{16}} & \cong 
S\{1,a'\}\{b\alpha\}.
\end{align*}
\end{cor}
\begin{proof}
We only prove the case $G=A{\co}\bZ/8$ since the proof of
the other cases are similar.
Note in $\S 5$ the 
element $Q_1(\alpha)$ is written by $b$
and  $b^2=a_1a_2$.  Recall $S=\bZ/3[a_1+a_2,a_1a_2]$.
Hence we get
\begin{align*}
H^*(BA)^{{\langle}l{\rangle}}\cap J(G) & \cong 
S\{1,a',ab,(a_1-a_2)b\}\cap {\rm Ideal}(b)\ \\
& =S\{b^2,b^2a',ab,(a_1-a_2)b\} \\
& = S\{b,ba',a,(a_1-a_2)\}\{Q_1(\alpha)\}.
\end{align*}
The corollary follows.
\end{proof}

By Lewis, we can write  \cite{L2,T-Y}
\[H^{\odd}(BE;\bZ)\cong \bZ/p[y_1,y_2]/
(y_1\alpha_2-y_2\alpha_1,
y_1^p\alpha_2-y_2^p\alpha_1)\{\alpha_1,\alpha_2\},
\] where $|\alpha_i|=3$.
It is also known that $Q_1(\alpha_i)=y_iv$ and
$Q_1{\co}H^{\odd}(BE;\bZ_{(p)})\to H^{\even}(BE)\subset
H^{\even}(BE;\bZ)/p$
is injective \cite{Y1}. Using this we can prove the
following lemma.
\begin{lemma}[Yagita \cite{Y1}]
\label{lem:lem11.6}
Let $G$ have the $p$--Sylow subgroup 
$E$.  Then
\[Q_1 {\co}\ H^{\odd}(BG) \cong 
(H^{\even}(BG)\cap J(G))\]
with $J(G)={\rm Ideal}(y_iv)\subset H^{\even}(BE).$
\end{lemma}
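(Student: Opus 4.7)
My plan is to deduce this lemma from the analogous statement for $BE$ itself (which is essentially the paragraph preceding the lemma and the result of \cite{Y1}) by invoking the stable element description of $H^*(BG;\bZ_{(p)})$ from \fullref{thm:thm3.1} and the naturality of $Q_1$. First I would record the integral stable element theorem
\[
H^*(BG;\bZ_{(p)}) \cong H^*(BE;\bZ_{(p)})^{W_G(E)}\cap
\bigcap_{A\colon F^{\ec}\text{-}\rad} i_A^{*-1}H^*(BA;\bZ_{(p)})^{W_G(A)},
\]
so that an element of $H^{\odd}(BG;\bZ_{(p)})$ is the same as an element of $H^{\odd}(BE;\bZ_{(p)})$ satisfying the $W_G(E)$-invariance and each $W_G(A)$-stability condition.

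Next I would exploit the fact that the Milnor primitive $Q_1$ is a stable cohomology operation, hence commutes with every restriction map $i_A^*$ and with each $g^*$ for $g\in\Out(E)$. Combined with $Q_1(\alpha_i)=y_i v$ and the $\bZ/p[y_1,y_2,v]$-module description of $H^{\odd}(BE;\bZ)$ preceding the lemma, this immediately shows that $Q_1$ carries $H^{\odd}(BG;\bZ_{(p)})$ into $H^{\even}(BE)\cap J(G)=H^{\even}(BG)\cap J(G)$. Injectivity of $Q_1$ on $H^{\odd}(BG;\bZ_{(p)})$ is then inherited from the injectivity of $Q_1$ on $H^{\odd}(BE;\bZ_{(p)})$ already cited in the text.

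For surjectivity I would argue as follows: given $z\in H^{\even}(BG)\cap J(G)$, use the $E$-version of the lemma to produce $w\in H^{\odd}(BE;\bZ_{(p)})$ with $Q_1(w)=z$, and then upgrade $w$ to a stable element. For each $g\in W_G(E)$, naturality gives $Q_1(g^*w-w)=g^*z-z=0$, whence $g^*w=w$ by injectivity; for each $F^{\ec}$-radical $A$ and each $h\in W_G(A)$, naturality of $Q_1$ on $H^*(BA;\bZ_{(p)})$ together with \fullref{lem:lem11.4} (whose injectivity statement applies on each such $A$) forces $h^*(i_A^*w)=i_A^*w$. Thus $w$ satisfies every stability condition and so $w\in H^{\odd}(BG;\bZ_{(p)})$.

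The main obstacle is the surjectivity step — specifically making sure that the preimage $w$ really does sit inside the intersection description of $H^{\odd}(BG;\bZ_{(p)})$. The cleanest way to handle this is to verify injectivity of $Q_1$ both on $H^{\odd}(BE;\bZ_{(p)})$ \emph{and} on each $H^{\odd}(BA;\bZ_{(p)})$ separately (the latter is \fullref{lem:lem11.4}); once both are in hand, the $W_G(E)$-invariance and the $W_G(A)$-stability of $z$ each force the corresponding property of $w$, and the argument closes without any further calculation with the explicit generators $\alpha_1,\alpha_2$ beyond $Q_1(\alpha_i)=y_iv$.
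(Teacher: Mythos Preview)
Your proposal is correct and follows exactly the route the paper sketches: the paper gives no detailed proof of \fullref{lem:lem11.6} but simply records that $Q_1(\alpha_i)=y_iv$ and that $Q_1$ is injective on $H^{\odd}(BE;\bZ_{(p)})$ (from \cite{Y1}), and says ``Using this we can prove the following lemma.'' Your argument --- reduce to the $E$-case, then use naturality of $Q_1$ together with the integral stable-element description and the injectivity of $Q_1$ on both $H^{\odd}(BE;\bZ_{(p)})$ and each $H^{\odd}(BA;\bZ_{(p)})$ (\fullref{lem:lem11.4}) to transport invariance and stability from $z$ back to its preimage $w$ --- is precisely the intended fleshing-out of that hint.
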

From the above lemma we easily compute the odd
dimensional elements.
Note that 
\[D_2=CV\not \in J(E)\text{ but }
D_2^2=C^2V^2=(Y_1^2+Y_2^2-Y_1Y_2)V^2\in J(E).\]
Let us write $\alpha =(Y_1y_1^{p-2}\alpha _1
+Y_2y_2^{p-2}\alpha _2
-Y_1y_2^{p-2}\alpha _2)Vv^{p-2}$ so that $Q_1(\alpha)=D_2^2$.
\begin{cor} \label{cor:cor11.7}
$H^{\odd}(B^2F_4(2)';\bZ_{(3)})\cong DA\{\alpha,\alpha '\}$
with $\alpha '=(y_1\alpha _1+y_2\alpha _2)v.$
\end{cor}
\begin{proof}
Recall that $H^*(B^2F_4(2)')\cong DA\{1,(Y_1+Y_2)V\}$
from the remark of \fullref{prop:prop6.3}.  The 
result is easily obtained          
from  $Q_1(\alpha)=D_2^2,Q_1(\alpha')=(Y_1+Y_2)V$.
\end{proof}
\begin{cor}  \label{cor:cor11.8} There are isomorphisms
\begin{align*}
H^{\odd}(BRV_3;\bZ_{(7)}) & \cong DA\{a,a^3,a^5\}\{\alpha'\}\\
H^{\odd}(BRV_2;\bZ_{(7)}) & \cong DA\{1,a,\ldots,a^5\}\{\alpha 
'\},
\end{align*}                 
with $\alpha'=(y_1\alpha_1+y_2\alpha_2)v$.
\end{cor}
\begin{proof}
We can easily compute
\[Q_1(\alpha')=Q_1((y_1\alpha_1+y_2\alpha_2)v)
=(y_1Q_1(\alpha_1)+y_2Q_1(\alpha_2))v
=(y_1^2+y_2^2)v^2=a.\]
Recall that $H^*(BRV_3)\cong DA\{1,a^2,a^4\}$. We get
\[H^*(BRV_3)\cap{\rm Ideal}(y_iv)=DA\{D_2^2,a^2,a^4\}
=DA\{a^5,a,a^3\}(Q_1\alpha '),\]
and the corollary follows.
\end{proof}
\begin{cor}  \label{cor:cor11.9}
$H^{\odd}(BRV_1;\bZ_{(7)})\cong DA\{\wbar b,\wbar b^3,\wbar b^5\}
\{\alpha''\}
\oplus DA\{\alpha\}$
where $\alpha''=y_1v\alpha_2$.
\end{cor}
\begin{proof}
Recall \fullref{cor:cor8.8}.We have
$C\wbar c''=C(Y_1+Y_2-2C)=-Y_1^2-Y_2^2+2Y_1Y_2.$
Hence we can see  $Q_1(\alpha)=-D_2\wbar c''V$.
\end{proof}
\begin{cor}  \label{cor:cor11.10}
The cohomology $H^{\odd}(B\bM;\bZ_{(13)})$ is isomorphic to 
\[ DA\{\alpha,\alpha_8,(u_8v^8)\alpha_8\}
\oplus CA\{\alpha _6,
(u_6v^3)\alpha_6,(u_6v^3)^2\alpha_6,\alpha_{12}\},\]
where 
\begin{align*}
\alpha_8 &=y_2(y_2^6+9y_2^3y_1^3+8y_1^6)v^7\alpha_1 \\
\alpha_6 & =(y_2^5\alpha_2-9y_2^2y_1^3 \alpha_2+8y_1^5y\alpha_1)v^2 \\
\alpha_{12} & =C(y_2^{11}\alpha_2
-2y_2^8y_1^3\alpha_2+3y_2^2y_1^9\alpha_2+y_1^{11}\alpha_1)v^{11}
-3\alpha/V.
\end{align*}
\end{cor}
\begin{proof}
It is almost immediate  that
\[Q_1(\alpha_8)=u_8v^8,\ \ Q_1(\alpha_6)=u_6v^3,\ \
Q_1(\alpha_{12})=(u_{12}-5u_{12}'-3C)CV.\]
From \fullref{thm:thm10.3}, we get the corollary.
\end{proof}

\bibliographystyle{gtart}
\bibliography{link}

\begin{thebibliography}{}
\providecommand\bibmarginpar{\leavevmode\marginpar}
\def\urlstyle#1{{\tt #1}}

\bibitem{B-L-O}
\textbf{C Broto}, \textbf{R Levi}, \textbf{B Oliver},
  \href{http://dx.doi.org/10.1090/S0894-0347-03-00434-X} {\emph{The homotopy
  theory of fusion systems}}, J. Amer. Math. Soc. 16 (2003) 779--856
  \xox{MR}{1992826}

\bibitem{D-P}
\textbf{J Dietz}, \textbf{S Priddy}, \emph{The stable homotopy type of rank two
  {$p$}--groups}, from: ``Homotopy theory and its applications (Cocoyoc,
  1993)'', Contemp. Math. 188, Amer. Math. Soc., Providence, RI (1995)  93--103
  \xox{MR}{1349132}

\bibitem{G}
\textbf{D\,J Green}, \emph{On the cohomology of the sporadic simple group
  {$J\sb 4$}}, Math. Proc. Cambridge Philos. Soc. 113 (1993) 253--266
  \xox{MR}{1198410}

\bibitem{H-K}
\textbf{J\,C Harris}, \textbf{N\,J Kuhn}, \emph{Stable decompositions of
  classifying spaces of finite abelian {$p$}--groups}, Math. Proc. Cambridge
  Philos. Soc. 103 (1988) 427--449 \xox{MR}{932667}

\bibitem{L1}
\textbf{I Leary}, \emph{The integral cohomology rings of certain finite
  groups}, PhD thesis, Cambridge University (1990)

\bibitem{L2}
\textbf{I Leary}, \emph{The integral cohomology rings of some {$p$}--groups},
  Math. Proc. Cambridge Philos. Soc. 110 (1991) 25--32 \xox{MR}{1104598}

\bibitem{M-P}
\textbf{J Martino}, \textbf{S Priddy},
  \href{http://dx.doi.org/10.1016/0040-9383(92)90067-R} {\emph{The complete
  stable splitting for the classifying space of a finite group}}, Topology 31
  (1992) 143--156 \xox{MR}{1153242}

\bibitem{Q}
\textbf{D Quillen},
  \href{http://links.jstor.org/sici?sici=0003-486X(197111)2:94:3%3C549:TSOAEC%%
3E2.0.CO%3B2-W} {\emph{The spectrum of an equivariant cohomology ring. {I},
  {II}}}, Ann. of Math. $(2)$ 94 (1971) 549--572; ibid. (2) 94 (1971), 573--602
  \xox{MR}{0298694}

\bibitem{R-V}
\textbf{A Ruiz}, \textbf{A Viruel},
  \href{http://dx.doi.org/10.1007/s00209-004-0652-1} {\emph{The classification
  of {$p$}--local finite groups over the extraspecial group of order {$p\sp 3$}
  and exponent {$p$}}}, Math. Z. 248 (2004) 45--65 \xox{MR}{2092721}

\bibitem{S}
\textbf{M Suzuki}, \emph{Group theory. {I}}, Grundlehren der Mathematischen
  Wissenschaften 247, Springer, Berlin (1982) \xox{MR}{648772}\ Translated from
  the Japanese by the author

\bibitem{T-Y}
\textbf{M Tezuka}, \textbf{N Yagita},
  \href{http://dx.doi.org/10.1006/jabr.1996.0230} {\emph{On odd prime
  components of cohomologies of sporadic simple groups and the rings of
  universal stable elements}}, J. Algebra 183 (1996) 483--513 \xox{MR}{1399037}

\bibitem{Th}
\textbf{C\,B Thomas}, \emph{Characteristic classes and {$2$}--modular
  representations for some sporadic simple groups. {II}}, from: ``Algebraic
  topology Pozna\'n 1989'', Lecture Notes in Math. 1474, Springer, Berlin
  (1991)  371--381 \xox{MR}{1133913}

\bibitem{Y1}
\textbf{N Yagita}, \href{http://dx.doi.org/10.1006/jabr.1997.7251} {\emph{On
  odd degree parts of cohomology of sporadic simple groups whose {S}ylow
  {$p$}--subgroup is the extra-special {$p$}--group of order {$p\sp 3$}}}, J.
  Algebra 201 (1998) 373--391 \xox{MR}{1612382}

\bibitem{Y2}
\textbf{N Yagita}, \href{http://dx.doi.org/10.1112/S0024610799007802}
  {\emph{Classifying spaces of sporadic simple groups for odd primes}}, J.
  London Math. Soc. $(2)$ 60 (1999) 143--154 \xox{MR}{1721821}

\end{thebibliography}
\end{document}